\pgfplotsset{compat=newest}
\newtheorem{theorem}{Theorem}[section]
\newtheorem{lemma}[theorem]{Lemma}
\theoremstyle{definition}
\theoremstyle{remark}
\newtheorem{remark}[theorem]{Remark}
\numberwithin{equation}{section}
\newcommand{\T}{\mathcal{T}}
\begin{document}

\title[Error estimates for a bang-bang optimal control problem]{A posteriori error estimates for a bang-bang optimal control problem}
\author{Francisco Fuica}
\address{Departamento de Matem\'atica y Ciencia de la Computaci\'on, Universidad de Santiago de Chile, Santiago, Chile.}
\email{francisco.fuica@usach.cl}
\thanks{The author is supported by ANID through FONDECYT postdoctoral project 3230126.}

\subjclass[2010]{Primary 
49M25,		   
65N15,         
65N30.         
}

\keywords{optimal control problems, bang-bang control, convergence, error estimates, a posteriori analysis.}

\date{}

\dedicatory{}

\begin{abstract}
We propose and analyze a posteriori error estimates for a control-constrained optimal control problem with bang-bang solutions. 
We consider a solution strategy based on the variational approach, where the control variable is not discretized; no Tikhonov regularization is made. 
We design, for the proposed scheme, a residual-type a posteriori error estimator that can be decomposed as the sum of two individual contributions related to the discretization of the state and adjoint equations.
We explore reliability and efficiency properties of the aforementioned error estimator.
We illustrate the theory with numerical examples.
\end{abstract}

\maketitle


\section{Introduction}\label{sec:intro}
The main purpose of this work is the design and analysis of a posteriori error estimates for a bang-bang optimal control problem governed by an elliptic partial differential equation (PDE) as state equation. 
This PDE-constrained optimization problem entails the minimization of a cost functional in which the cost of the control is negligible, so no Tikhonov regularization term is considered.
To make matters precise, let $\Omega\subset\mathbb{R}^d$ ($d\in \{2, 3\}$) be an open, bounded and convex polygonal/polyhedral domain with boundary $\partial\Omega$. 
Given a desired state $y_\Omega\in L^2(\Omega)$, we define the cost functional
\begin{align*}
J(u):=\frac{1}{2}\|y_{u} - y_{\Omega}\|_{\Omega}^{2}.
\end{align*} 
We will consider the following optimal control problem:
\begin{align}\label{def:opt_cont_prob}
\min J(u) ~ \text{ subject to } ~ -\Delta y_{u}  =  u  + f  \text{ in }  \Omega, \quad
y  =  0  \text{ on }  \partial\Omega,  ~ \text{ and } ~ u\in \mathbb{U}_{ad},
\end{align} 
where $f\in L^{2}(\Omega)$ denotes an external source and the set of admissible controls $\mathbb{U}_{ad}$ is given by $\mathbb{U}_{ad}:=\{v \in L^2(\Omega):  a \leq v(x) \leq b \text{ f.a.e.}~x \in \Omega \}$ with $a < b$.

One of the main challenges in deriving error estimates for problems within the setting of \eqref{def:opt_cont_prob} is that the cost function $J$ does not incorporate the standard Tikhonov regularization term $\alpha\|u\|_{\Omega}^{2}$ with $\alpha > 0$.
Note that without this term we cannot directly derive nor bound the standard error term $\alpha\|\bar{u} - \bar{u}_{h}\|_{\Omega}^{2}$, where $\bar{u}$ denotes the optimal control and $\bar{u}_{h}$ denotes a suitable approximation of $\bar{u}$. This has motivated the analysis of approximation techniques for such problems.
To the best of our knowledge, the first work that provides an a priori error analysis for problem \eqref{def:opt_cont_prob} is \cite{MR2891922}.
In such a work, the authors used the so-called variational approach, introduced in \cite{MR2122182}, in order to discretize problem \eqref{def:opt_cont_prob}.
In addition, they proved estimates for the approximation error associated with the optimal state and adjoint state without assuming that the control is of bang-bang type; see \cite[Lemma 2.1]{MR2891922}.
For the case when the optimal control is of bang-bang type, the authors proved an error estimate for all the optimal variables \cite[Theorem 2.2]{MR2891922}.
A suitable Tikhonov regularization of problem \eqref{def:opt_cont_prob} and its convergence were studied in \cite{MR3780469}, under an additional assumption on the structure of the optimal control.
The parabolic counterpart of \eqref{def:opt_cont_prob} was studied in \cite{MR4076464}, where the authors proved, using Petrov-Galerkin schemes in time and conforming finite elements in space, a priori estimates for the error between a discretized regularized problem and the limit problem.
In the particular case of bang-bang controls, the estimates were further improved; see \cite[Theorem 3.4]{MR4076464}. 
In \cite{MR3810878} the authors considered a Tikhonov regularization of problem \eqref{def:opt_cont_prob} with a semilinear elliptic PDE and derived a priori regularization error estimates for the control; a suitable extension to sparse optimal control problems was derived as well.
For a similar semilinear optimal control problem, a priori error estimates without regularization were studied \cite{MR4791221}.
In this work, the author derived error estimates for local minimizers that satisfy specific local growth conditions recently introduced in the context of solution stability, which involve the joint growth of the first and second variation of the objective functional.
Finally, for second-order analysis, stability, and approximation results for bang-bang optimal control problems, we refer the reader to \cite{MR2974742,MR3706910,MR3982675,MR4525177,CDJ2023,MR4744369}.

Among the different numerical methods that exist in the literature to approximate solutions to PDE-constrained optimization problems (and PDEs in general), a particular class stands out for its competitive performance, improving the quality of discrete approximations of the corresponding problem within a prescribed tolerance using a minimal amount of work. 
These are the adaptive finite element methods (AFEMs).
The main tools present in these iterative methods are a posteriori error estimates, which provide global and local information on the error of discrete solutions and that can be easily computed from the given numerical solution and problem data.
Regarding the use of these methods in the context of control--constrained linear--quadratic optimal control problems, several advances have been made in recent years.
For a discussion on this matter, we refer the interested reader to the following non-exhaustive list of works: \cite{MR1887737,MR1780911,MR2434065,MR3212590,MR3621827}.
As opposed to these advances, the analysis of AFEMs for bang-bang optimal control problems is rather scarce. 
To the best of our knowledge, the work \cite{MR3095657} appears to be the first one that provides a posteriori error estimates for problem \eqref{def:opt_cont_prob}. 
In this article, the author investigated Tikhonov regularization and discretization of bang-bang control problems, developing a parameter choice rule that adaptively selects the Tikhonov regularization parameter depending on a posteriori computable quantities. 
However, the error estimates were not robust with respect to $\alpha$.
We also mention the work \cite{MR3385653}, where robust global reliability estimates were provided.
We note that no efficiency estimates were provided in \cite{MR3095657,MR3385653}.

In the present manuscript, we consider the variational discretization \cite{MR2122182} to approximate the optimal control problem \eqref{def:opt_cont_prob}.
In particular, we use piecewise linear functions to approximate the solution of the state and adjoint equations whereas the admissible control set is not discretized.
To perform the analysis, we follow \cite{MR2891922} and do not consider a Tikhonov regularization.
This approach allows us to circumvent the necessity of choosing  a suitable regularization parameter  for each mesh, cf. \cite{MR3385653}.
 Within this framework, we devise a residual--based a posteriori error estimator that is formed by only two contributions that are related to the discretization of the state and adjoint equations.
In two- and three-dimensional convex polygonal/polyhedral domains, we prove efficiency estimates; reliability properties of the a posteriori error estimator are studied as well.
More precisely, we prove that the corresponding local error indicators associated to the discretization of the state and adjoint equations are \emph{locally efficient}; see Theorem \ref{thm:efficiency_est_ocp}.
We recall that reliability estimates are sufficient to obtain a numerical solution within a prescribed tolerance, whereas efficiency estimates are important as they ensure that the mesh is correctly refined, providing a numerical solution using a (nearly) minimal number of degrees of freedom \cite{MR3059294}.
Based on the proposed a posteriori error estimator, we design a simple adaptive loop that delivers optimal experimental rates of convergence for all the involved individual contributions of the corresponding error. 
In particular, the aforementioned loop delivers quadratic rates of convergence for the approximation error associated to all the optimal variables. 
In addition, and in contrast to \cite[Section 5.2]{MR3385653}, the error indicator that we consider for the adjoint variable in the $L^{\infty}(\Omega)$--norm allows for unbounded forcing terms. 
This is of importance since, as observed from \eqref{eq:discrete_adj_eq}, the discrete adjoint equation has $\bar{y}_{\ell} - y_\Omega$ as a forcing term and, in general, the latter does not necessarily belong to $L^{\infty}(\Omega)$. 

The rest of the manuscript is organized as follows. 
The remaining of this section is devoted to introduce the notation that we will use throughout the manuscript.
In section \ref{sec:ocp} we present a weak formulation for the optimal control problem under consideration and introduce a finite element discretization scheme.
The main part of the paper is section \ref{sec:a_post}, where we design an a posteriori error estimator for the proposed approximation scheme and analyze reliability and efficiency properties. 
Finally, in section \ref{sec:num_ex}, we present a series of two-dimensional numerical examples that illustrate the theory and reveal a competitive performance of the devised AFEMs.

%
%


\subsection{Notation}\label{sec:notation}

Throughout this work, we use standard notation for Lebesgue and Sobolev spaces and their corresponding norms. 
Given an open and bounded domain $G$, we denote by $(\cdot,\cdot)_{G}$ and $\| \cdot \|_{G}$ the inner product and norm of $L^{2}(G)$, respectively.
If $\mathcal{X}$ and $\mathcal{Y}$ are Banach function spaces, we write $\mathcal{X}\hookrightarrow \mathcal{Y}$ to denote that $\mathcal{X}$ is continuously embedded in $\mathcal{Y}$. 
The relation $\mathfrak{a} \lesssim \mathfrak{b}$ indicates that $\mathfrak{a} \leq C \mathfrak{b}$, with a positive constant that depends neither on $\mathfrak{a}$, $\mathfrak{b}$ nor on the discretization parameters. 
The value of $C$ might change at each occurrence.


\section{The optimal control problem}
\label{sec:ocp}

In this section, we briefly present a weak formulation for problem \eqref{def:opt_cont_prob} and recall first-order optimality conditions. 
We also introduce a finite element discretization scheme.

\subsection{Weak formulation}

We consider the following weak version of problem \eqref{def:opt_cont_prob}: Find
\begin{align}\label{def:weak_ocp}
\min \{ J(u): u \in \mathbb{U}_{ad}\}
\end{align}
subject to 
\begin{align}\label{eq:weak_st_eq}
y_{u}\in H_0^{1}(\Omega) ~:~ (\nabla y_{u},\nabla v)_{\Omega}=(u + f,v)_{\Omega} \quad \forall v \in H_0^1(\Omega).
\end{align}
Problem \eqref{def:weak_ocp}--\eqref{eq:weak_st_eq} admits a unique optimal solution $\bar{u}\in\mathbb{U}_{ad}$ \cite[Theorem 2.14]{MR2583281} and the optimal control $\bar{u}$ satisfies the (sufficient) first-order optimality condition \cite[Theorem 1.1]{MR2891922} 
\begin{align}\label{eq:var_ineq}
(\bar{p}, u - \bar{u})_{\Omega} \geq 0 \quad \forall u \in \mathbb{U}_{ad},
\end{align}
where the optimal \emph{adjoint state} $\bar{p}\in H_0^1(\Omega)$ solves the following adjoint equation:
\begin{align*}
(\nabla v,\nabla \bar{p})_{\Omega}=(\bar{y} - y_{\Omega},v)_{\Omega} \quad \forall v \in H_0^1(\Omega),
\end{align*}
with $\bar{y} := y_{\bar{u}}$.
Moreover, inequality \eqref{eq:var_ineq} implies that, for a.e. $x\in\Omega$, we have 
\begin{align*}
\bar{u}(x) = a ~ \text{ if } ~  \bar{p}(x) > 0, \quad \bar{u}(x) \in [a,b] ~ \text{ if } ~  \bar{p}(x) = 0, \quad \bar{u}(x) = b  ~ \text{ if } ~  \bar{p}(x) < 0;
\end{align*}
see \cite[Remark 1.2]{MR2891922}.

\subsection{Finite element approximation}

Let us introduce some ingredients of standard finite element approximations \cite{MR2050138,MR2373954}. 
Let $\mathcal{T} = \{T\}$ be a conforming partition of $\overline{\Omega}$ into simplices $T$ with size $h_T := \textrm{diam}(T)$. 
Let us denote by $\mathbb{T}$ the collection of conforming and shape regular meshes that are refinements of $\mathcal{T}_0$, where $\mathcal{T}_0$ represents an initial mesh. 
Given a mesh $\mathcal{T}_{\ell} \in \mathbb{T}$ with $\ell\in\mathbb{N}_{0}$, we denote by $\mathcal{E}_{\ell}$ the set of \emph{internal} $(d-1)$-dimensional interelement boundaries $e$ of $\mathcal{T}_{\ell}$. 
For $T \in \mathcal{T}_{\ell}$, we let $\mathcal{E}_T$ denote the subset of $\mathcal{E}_{\ell}$ which contains the sides of the element $T$. 

Given a mesh $\mathcal{T}_{\ell} \in \mathbb{T}$ with $\ell\in\mathbb{N}_{0}$, we define the finite element space of continuous piecewise polynomials of degree one that vanish on the boundary as
\begin{align*}
\mathbb{V}_{\ell} = \{v_{\ell}\in C(\overline{\Omega}): v_{\ell}|_{T}\in \mathbb{P}_1(T) ~ \forall T\in \mathcal{T}_{\ell} \} \cap H_0^{1}(\Omega).
\end{align*}
Given $v_{\ell} \in \mathbb{V}_{\ell}$, we define, for any internal side $e \in \mathcal{E}_{\ell}$, the jump or interelement residual $\llbracket \nabla v_{\ell}\cdot \mathbf{n} \rrbracket$ on $e$ by
\[
\llbracket \nabla v_{\ell}\cdot \mathbf{n} \rrbracket|_{e}:= \mathbf{n}^{+} \cdot \nabla v_{\ell}|^{}_{T^{+}} + \mathbf{n}^{-} \cdot \nabla v_{\ell}|^{}_{T^{-}},
\]
where $\mathbf{n}^{\pm}$ denote the unit exterior normal to the element $T^{\pm}$. Here, $T^{+}$, $T^{-} \in \T_{\ell}$ are such that $T^{+} \neq T^{-}$ and $\partial T^{+} \cap \partial T^{-} = e$.

We consider the following semi-discrete version of the optimal control problem \eqref{def:weak_ocp}--\eqref{eq:weak_st_eq}: Find 
\begin{align}\label{eq:discrete_opc_semi}
\min \{ J(\mathfrak{u}): \mathfrak{u} \in \mathbb{U}_{ad}\}
\end{align}
subject to the discrete state equation
\begin{align}\label{eq:discrete_pde_semi}
(\nabla y_{\ell}(\mathfrak{u}),\nabla v_\ell)_{\Omega}
=
(\mathfrak{u} + f, v_\ell)_{\Omega} 
\quad \forall v_\ell\in \mathbb{V}_{\ell}.
\end{align}
Existence of an optimal control $\bar{\mathfrak{u}}$ for \eqref{eq:discrete_opc_semi}--\eqref{eq:discrete_pde_semi} can be proved by standard arguments. 
However, uniqueness of $\bar{\mathfrak{u}}$ is not guaranteed \cite[Remark 3.1]{MR4076464}. 
Despite this fact, we can characterize optimal solutions as in the continuous case: every optimal control $\bar{\mathfrak{u}}$ for \eqref{eq:discrete_opc_semi}--\eqref{eq:discrete_pde_semi} satisfies 
\begin{align}\label{eq:semi_var_ineq}
(\bar{p}_{\ell}, u - \bar{\mathfrak{u}})_{\Omega} \geq 0 \quad \forall u \in \mathbb{U}_{ad},
\end{align}
where $\bar{p}_{\ell}\in \mathbb{V}_{\ell}$ solves the discrete adjoint equation
\begin{align}\label{eq:discrete_adj_eq}
(\nabla v_{\ell},\nabla \bar{p}_{\ell})_{\Omega}=(\bar{y}_{\ell} - y_{\Omega},v_{\ell})_{\Omega} \quad \forall v_{\ell} \in \mathbb{V}_{\ell}.
\end{align}
Here, $\bar{y}_{\ell}$ solves problem \eqref{eq:discrete_pde_semi} with $\mathfrak{u} = \bar{\mathfrak{u}}$. 
We immediately notice that $\bar{y}_{\ell}$ and $\bar{p}_{\ell}$ are unique, even if $\bar{\mathfrak{u}}$ is not unique; see \cite[Remark 3.1]{MR4076464}.

As in the continuous case, from \eqref{eq:semi_var_ineq} it stems the following characterization for optimal controls $\bar{\mathfrak{u}}$, for a.e. $x\in \Omega$: 
\begin{equation}\label{eq:pointwise_charac_discrete}
\bar{\mathfrak{u}}(x) = a ~ \text{ if } ~  \bar{p}_{\ell}(x) > 0, \quad \bar{\mathfrak{u}}(x) \in [a,b] ~ \text{ if } ~  \bar{p}_{\ell}(x) = 0, \quad \bar{\mathfrak{u}}(x) = b  ~ \text{ if } ~  \bar{p}_{\ell}(x) < 0. \hspace{-0.2cm}
\end{equation}
From \eqref{eq:pointwise_charac_discrete} it follows that, if $\bar{p}_{\ell}$ admits a zero level set of measure $0$, then $\bar{\mathfrak{u}}(x)=a$ or $\bar{\mathfrak{u}}(x)=b$ for a.e. $x\in \Omega$ and thus $\bar{\mathfrak{u}}$ is both unique and of bang-bang type.

Finally, since $\bar{\mathfrak{u}}$ implicitly depends on $\mathcal{T}_\ell$, in what follows we shall adopt the notation $\bar{\mathfrak{u}}_{\ell}$.


\section{A posteriori error estimates}\label{sec:a_post}

In the present section, we design an error estimator for the optimal control problem \eqref{eq:discrete_opc_semi}--\eqref{eq:discrete_pde_semi}.
We explore its reliability properties in section \ref{sec:reliability}, whereas its local efficiency is proved in section \ref{sec:efficiency}.


\subsection{Reliability}\label{sec:reliability}
The upcoming analysis mainly hinges on approximations of the error between a solution to the semi-discrete optimal control problem and suitable auxiliary variables that we shall define in what follows.


\subsubsection{Auxiliary upper bounds}\label{sec:aux}

Let $\bar{\mathfrak{u}}_{\ell}$ be a solution of the semi-discrete optimal control problem associated to a mesh $\mathcal{T}_{\ell}\in \mathbb{T}$. We introduce the auxiliary variable $y_{\bar{\mathfrak{u}}_{\ell}}\in H_0^{1}(\Omega)$, defined as the unique solution to 
\begin{align}\label{eq:aux_y_ul}
(\nabla y_{\bar{\mathfrak{u}}_{\ell}},\nabla v)_{\Omega}
=
(\bar{\mathfrak{u}}_{\ell} + f, v)_{\Omega} \quad \forall v \in H_0^{1}(\Omega).
\end{align}
We note that the discrete optimal state $\bar{y}_{\ell}$ corresponds to the finite element approximation of $y_{\bar{\mathfrak{u}}_{\ell}}$ in $\mathbb{V}_{\ell}$. 
Hence, since we have assumed that $\Omega$ is convex, we can use the results from \cite[Section 2.4]{MR1885308} to obtain that
\begin{equation}\label{eq:estimate_haty-yh}
\|y_{\bar{\mathfrak{u}}_{\ell}} - \bar{y}_{\ell}\|_{\Omega} \lesssim {\eta}_{st,2},
\end{equation}
where the error estimator $\eta_{st,2}$ and its local error indicators are defined by
\begin{align}\label{def:state_indicator}
{\eta}_{st,2}^{2}:=\sum_{T\in\T_{\ell}} {E}_{st,T}^{2}, 
\qquad
E_{st,T}^{2}:=h_{T}^{4}\|\bar{\mathfrak{u}}_{\ell} + f\|_{T}^{2} + \sum_{e\in\mathcal{E}_{T}}h_{T}^{3}\|\llbracket \nabla \bar{y}_{\ell} \cdot \mathbf{n}\rrbracket|_{e}\|_{e}^{2}.
\end{align}

We define $p_{\bar{y}_{\ell}} \in H_0^{1}(\Omega)$ as the unique solution to 
\begin{equation}\label{eq:aux_p}
(\nabla v, \nabla p_{\bar{y}_{\ell}})_{\Omega}
=
(\bar{y}_{\ell} - y_{\Omega}, v)_{\Omega} \quad \forall v \in H_0^{1}(\Omega),
\end{equation}
and immediately note that $\bar{p}_{\ell}$ corresponds to the finite element approximation of $p_{\bar{y}_{\ell}}$ in $\mathbb{V}_{\ell}$. 

We introduce, for each $T\in\T_{\ell}$, the following a posteriori local error indicators 
\begin{align}\label{def:pointwise_estimator}
E_{adj,\infty,T}:= h_T^{2-\frac{d}{2}} \| \bar{y}_{\ell} - y_{\Omega}\|_{T} + h_T\max_{e\in\mathcal{E}_{T}}\|\llbracket\nabla \bar{p}_{\ell}\cdot \mathbf{n}\rrbracket|_{e}\|_{L^\infty(e)},
\end{align}
and the error estimator ${\eta}_{adj,\infty}:=\max_{T\in \mathcal{T}_{\ell}}E_{adj,\infty,T}$. The proof of the following reliability estimate can be found in \cite[Lemma 4.2]{MR3878607}:
\begin{align}\label{eq:estimate_hatp_inf}
\|p_{\bar{y}_{\ell}} - \bar{p}_{\ell}\|_{L^{\infty}(\Omega)} \lesssim \iota_{\ell}{\eta}_{adj,\infty},
\end{align}
where the term $\iota_{\ell}$ is defined by
\begin{equation*}
\iota_{\ell}:= \left|\log\bigg(\max_{T\in\mathcal{T}_{\ell}}\frac{1}{h_T}\bigg)\right|.
\end{equation*}
An important feature of the local error indicator in \eqref{def:pointwise_estimator} is that it incorporates the $L^2(T)$-norm of the element residual instead of the $L^{\infty}(T)$-norm, which is the common consideration in the literature; see, e.g., \cite{MR1270622,MR1740762,MR3520007,MR4648515}.
 In particular, the error estimator ${\eta}_{adj,\infty}$ allows for a pointwise a posteriori error analysis with unbounded right-hand sides, cf. \cite{MR3407259}. 
 This is of importance since, as it can be observed in \eqref{def:pointwise_estimator}, its indicators contain the term  $\bar{y}_{\ell} - y_{\Omega}$ which is not necessarily bounded in $L^{\infty}(T)$.


\subsubsection{Reliability estimates}\label{sec:rel}

In what follows we shall assume a standard structural assumption on the adjoint state related to $\bar{u}$ (cf. \cite[eq. (2.10)]{MR2891922} and \cite[eq. (6.1)]{MR4298694})
\begin{equation}\label{eq:assumption_S}
\exists \beta \in (0,1], ~ \exists \mathfrak{C} > 0, ~ \forall \varepsilon > 0 \quad \text{such that} \quad |\{x\in \Omega : |\bar{p}(x)|\leq \varepsilon\}|\leq \mathfrak{C}\varepsilon^{\beta},
\end{equation}
where, given a set $A\subset \Omega$, $|A|$ denotes the Lebesgue measure of it.
We immediately mention that a control $\bar{u}$ satisfying first-order optimality conditions and assumption \eqref{eq:assumption_S} is of bang-bang type.

\begin{theorem}[reliability estimates]\label{thm:global_rel_}
Let $\bar{u}\in \mathbb{U}_{ad}$ be the unique solution to problem \eqref{def:weak_ocp}--\eqref{eq:weak_st_eq} with $\bar{y}$ and $\bar{p}$ being the corresponding state and adjoint state variables, respectively. Let $\bar{\mathfrak{u}}_{\ell}\in \mathbb{U}_{ad}$ be a solution to the semi-discrete problem with $\bar{y}_{\ell}$ and $\bar{p}_{\ell}$ being the corresponding discrete state and discrete adjoint state variables, respectively. If assumption \eqref{eq:assumption_S} holds, then
\begin{align*}
\|\bar{u} - \bar{\mathfrak{u}}_{\ell}\|_{L^{1}(\Omega)}
\lesssim 
(\iota_{\ell}\eta_{adj,\infty} + 
\eta_{st,2})^{\beta},
\end{align*}
and
\begin{align*}
\|\bar{y} - \bar{y}_{\ell}\|_{\Omega} + \|\bar{p} - \bar{p}_{\ell}\|_{L^{\infty}(\Omega)}
\lesssim  
\iota_{\ell}\eta_{adj,\infty} + 
\eta_{st,2} + (\iota_{\ell}\eta_{adj,\infty} + 
\eta_{st,2})^{\beta}.
\end{align*}
The hidden constants are independent of the continuous and discrete optimal variables, the size of the elements in the mesh $\mathcal{T}_{\ell}$, and $\#\mathcal{T}_{\ell}$.
\end{theorem}
\begin{proof}
We proceed in three steps.

\underline{Step 1}.  (estimation of $\|\bar{u} - \bar{\mathfrak{u}}_{\ell}\|_{L^1(\Omega)}$)
Assumption \eqref{eq:assumption_S} and inequality \eqref{eq:var_ineq} with $u=\bar{\mathfrak{u}}_{\ell}$ imply that \cite[Lemma 6.3]{MR3810878} (see also \cite[Lemma 6]{MR4298694})
\begin{align}\label{eq:estimate_u-ul_beta}
\|\bar{u} - \bar{\mathfrak{u}}_{\ell}\|_{L^{1}(\Omega)}^{1 + \frac{1}{\beta}}
\lesssim
(\bar{p}, \bar{\mathfrak{u}}_{\ell} - \bar{u})_{\Omega}.
\end{align}
Choosing $u=\bar{u}$ in \eqref{eq:semi_var_ineq} and using the obtained inequality in \eqref{eq:estimate_u-ul_beta} we arrive at
\begin{align*}
\|\bar{u} - \bar{\mathfrak{u}}_{\ell}\|_{L^{1}(\Omega)}^{1 + \frac{1}{\beta}}
\lesssim
(\bar{p} - \bar{p}_{\ell}, \bar{\mathfrak{u}}_{\ell} - \bar{u})_{\Omega}.
\end{align*}
We invoke $p_{\bar{y}_{\ell}}\in H_0^{1}(\Omega)$, solution to \eqref{eq:aux_p}, and write
\begin{equation}\label{eq:estimate_I_II}
\|\bar{u} - \bar{\mathfrak{u}}_{\ell}\|_{L^{1}(\Omega)}^{1 + \frac{1}{\beta}}
\lesssim 
(p_{\bar{y}_{\ell}} - \bar{p}_{\ell}, \bar{\mathfrak{u}}_{\ell} - \bar{u})_{\Omega} +
(\bar{p} - p_{\bar{y}_{\ell}}, \bar{\mathfrak{u}}_{\ell} - \bar{u})_{\Omega}
= \mathbf{I} + \mathbf{II}.
\end{equation}
We now bound the terms $\mathbf{I}$ and $\mathbf{II}$ in \eqref{eq:estimate_I_II}. 
To estimate $\mathbf{I}$, we use the a posteriori error estimate \eqref{eq:estimate_hatp_inf}:
\begin{align}\label{eq:estimation_I_i}
\mathbf{I} 
\leq 
\|p_{\bar{y}_{\ell}} - \bar{p}_{\ell}\|_{L^{\infty}(\Omega)}\|\bar{\mathfrak{u}}_{\ell} - \bar{u}\|_{L^{1}(\Omega)}
\lesssim
\iota_{\ell}\eta_{adj,\infty}\|\bar{\mathfrak{u}}_{\ell} - \bar{u}\|_{L^{1}(\Omega)}.
\end{align}
To estimate $\mathbf{II}$, we note that $y_{\bar{\mathfrak{u}}_{\ell}} - \bar{y}\in H_0^1(\Omega)$ solves
\begin{equation}\label{eq:problem_haty-y}
(\nabla (y_{\bar{\mathfrak{u}}_{\ell}} - \bar{y}), \nabla v)_{\Omega}  = (\bar{\mathfrak{u}}_{\ell} - \bar{u}, v)_{\Omega} \quad \forall v\in H_0^{1}(\Omega),
\end{equation}
and that $\bar{p} - p_{\bar{y}_{\ell}}\in H_0^1(\Omega)$ solves
\begin{align}\label{eq:problem_hatp-p}
(\nabla v, \nabla (\bar{p} - p_{\bar{y}_{\ell}}))_{\Omega}  = (\bar{y} - \bar{y}_{\ell}, v)_{\Omega} \quad \forall v\in H_0^{1}(\Omega).
\end{align}
Hence, replacing $v = \bar{p} - p_{\bar{y}_{\ell}}$ in \eqref{eq:problem_haty-y} and $v = y_{\bar{\mathfrak{u}}_{\ell}} - \bar{y}$ in \eqref{eq:problem_hatp-p} we obtain the identity $(\bar{\mathfrak{u}}_{\ell} - \bar{u}, \bar{p} - p_{\bar{y}_{\ell}})_{\Omega} =  (\bar{y} - \bar{y}_{\ell},  y_{\bar{\mathfrak{u}}_{\ell}} - \bar{y})_{\Omega}$, which, in turns, yields 
\begin{align*}
\mathbf{II} 
=
-\|y_{\bar{\mathfrak{u}}_{\ell}} - \bar{y}\|_{\Omega}^{2} + (y_{\bar{\mathfrak{u}}_{\ell}} - \bar{y}_{\ell},  y_{\bar{\mathfrak{u}}_{\ell}} - \bar{y})_{\Omega} 
\leq 
 (y_{\bar{\mathfrak{u}}_{\ell}} - \bar{y}_{\ell},  y_{\bar{\mathfrak{u}}_{\ell}} - \bar{y})_{\Omega}.
\end{align*}
The latter, in light of estimate \eqref{eq:estimate_haty-yh}, allows us to obtain $\mathbf{II} \leq \eta_{st,2} \|y_{\bar{\mathfrak{u}}_{\ell}} - \bar{y}\|_{\Omega}$.
The term $ \|y_{\bar{\mathfrak{u}}_{\ell}} - \bar{y}\|_{\Omega}$ is bounded in view of the stability estimate $\|y_{\bar{\mathfrak{u}}_{\ell}} - \bar{y}\|_{\Omega} \lesssim \|\bar{\mathfrak{u}}_{\ell} - \bar{u}\|_{L^{1}(\Omega)}$ (see \cite[Lemma 2.3]{CDJ2023}).
Therefore, we have proved that 
\begin{equation}\label{eq:estimation_II}
\mathbf{II}
\leq 
\eta_{st,2} \|\bar{\mathfrak{u}}_{\ell} - \bar{u}\|_{L^{1}(\Omega)}.
\end{equation}
Finally, combining estimates \eqref{eq:estimate_I_II}, \eqref{eq:estimation_I_i}, and \eqref{eq:estimation_II} we conclude that 
\begin{align}\label{eq:estimate_error_control}
\|\bar{u} - \bar{\mathfrak{u}}_{\ell}\|_{L^{1}(\Omega)}
\lesssim 
(\iota_{\ell}\eta_{adj,\infty} + 
\eta_{st,2})^{\beta}.
\end{align}

\underline{Step 2}. (estimation of $\|\bar{y} - \bar{y}_{\ell}\|_{\Omega}$) The use of triangle inequality, a posteriori error estimate \eqref{eq:estimate_haty-yh}, and the stability estimate $\|\bar{y} - y_{\bar{\mathfrak{u}}_{\ell}}\|_{\Omega}  \lesssim \|\bar{u} - \bar{\mathfrak{u}}_{\ell}\|_{L^{1}(\Omega)}$ results in 
\begin{equation*}
\|\bar{y} - \bar{y}_{\ell}\|_{\Omega}
\leq
\|\bar{y} -  y_{\bar{\mathfrak{u}}_{\ell}}\|_{\Omega} + \|y_{\bar{\mathfrak{u}}_{\ell}} - \bar{y}_{\ell}\|_{\Omega}
\lesssim
\|\bar{u} - \bar{\mathfrak{u}}_{\ell}\|_{L^{1}(\Omega)} + \eta_{st,2}.
\end{equation*}
We conclude by using the bound \eqref{eq:estimate_error_control}.

\underline{Step 3}. (estimation of $\|\bar{p} - \bar{p}_{\ell}\|_{L^{\infty}(\Omega)}$) 
Triangle inequality and the a posteriori error estimate \eqref{eq:estimate_hatp_inf} give
\begin{equation*}
\|\bar{p} - \bar{p}_{\ell}\|_{L^{\infty}(\Omega)}
\leq
\|\bar{p} - p_{\bar{y}_{\ell}}\|_{L^{\infty}(\Omega)} + \|p_{\bar{y}_{\ell}} - \bar{p}_{\ell}\|_{L^{\infty}(\Omega)}
\lesssim
\|\bar{p} - p_{\bar{y}_{\ell}}\|_{L^{\infty}(\Omega)} + \iota_{\ell} {\eta}_{adj,\infty}.
\end{equation*}
The convexity of $\Omega$, the continuous embedding $H^{2}(\Omega) \hookrightarrow C(\overline{\Omega})$, and the stability of problem \eqref{eq:problem_hatp-p} yield the bound $\|\bar{p} - p_{\bar{y}_{\ell}}\|_{L^{\infty}(\Omega)}\lesssim  \|\bar{p} - p_{\bar{y}_{\ell}}\|_{H^{2}(\Omega)} \lesssim \|\bar{y} - \bar{y}_{\ell}\|_{\Omega}$.
This, in view of the estimate derived in Step 2, gives as a results
\begin{align*}
\|\bar{p} - \bar{p}_{\ell}\|_{L^{\infty}(\Omega)}
\lesssim
\iota_{\ell}\eta_{adj,\infty} + 
\eta_{st,2} + (\iota_{\ell}\eta_{adj,\infty} + 
\eta_{st,2})^{\beta}.
\end{align*}
This concludes the proof.
\end{proof}

\begin{remark}[case $\beta=1$]\label{rmk:beta=1}
When $\beta=1$ in \eqref{eq:assumption_S}, we obtain the following simplified upper bound for the total approximation error:
\begin{equation*}
\|\bar{u} - \bar{\mathfrak{u}}_{\ell}\|_{L^1(\Omega)} + \|\bar{y} - \bar{y}_{\ell}\|_{\Omega} + \|\bar{p} - \bar{p}_{\ell}\|_{L^{\infty}(\Omega)}
\lesssim  
{\eta}_{st,2} +  \iota_{\ell} {\eta}_{adj,\infty}.
\end{equation*}
A sufficient condition to ensure that assumption \eqref{eq:assumption_S} is fulfilled with $\beta=1$ was given in \cite[Lemma 3.2]{MR2891922}.
\end{remark}

   
\subsection{Efficiency}\label{sec:efficiency}
In this section, we study efficiency properties for the local a posteriori error estimators ${\eta}_{st,2}$ and ${\eta}_{adj,\infty}$, defined in section \ref{sec:aux}. 
Before proceeding with the analysis, we introduce some notation: for an edge, triangle or tetrahedron $G$, let $\mathcal{V}(G)$ be the set of vertices of $G$. 
Given $T \in \T_{\ell}$ and $e \in \mathcal{E}_T$, we denote by $\mathcal{N}_e \subset \mathcal{T}_{\ell}$ the subset that contains the two elements that have $e$ as a side, namely, $\mathcal{N}_e=\{T^+,T^-\}$, where $T^+, T^- \in \mathcal{T}_{\ell}$ are such that $e = T^+ \cap T^-$. 
For $T \in \mathcal{T}_{\ell}$, we define the \emph{star} associated with the element $T$ as
\begin{equation}\label{def:patch}
\mathcal{N}_T:= \left \{ T^{\prime}\in\mathcal{T}_{\ell}: \mathcal{E}_{T}\cap \mathcal{E}_{T^\prime}\neq\emptyset \right \}.
\end{equation}
In an abuse of notation, below we denote by $\mathcal{N}_T$ either the set itself or the union of its elements.

Let $T \in \T_{\ell}$ and $e \in \mathcal{E}_T$. 
We introduce the standard \emph{interior} and \emph{edge} bubble functions $\varphi_{T}$ and $\varphi_{e}$, respectively; see, e.g., \cite[Section 2.3.1]{MR1885308}. 
We also introduce the following facet bubble function 
\begin{equation*}
\psi_{e}|_{\mathcal{N}_{e}}=d^{4d}
\left(\prod_{\texttt{v}\in\mathcal{V}(e)} \phi_{\texttt{v}}^{T^+} \phi_{\texttt{v}}^{T^-}\right)^{2},
\end{equation*}
where, for $\texttt{v} \in \mathcal{V}(e)$, $\phi_{\texttt{v}}^{T^{\pm}}$ denotes the barycentric coordinates of $T^\pm$, respectively, which are understood as functions over $\mathcal{N}_{e}$, i.e., $\phi_{\texttt{v}}^{T^{\pm}}$ are extended to an affine function on $\mathcal{N}_{e}$ that vanishes in $\Omega\setminus\mathcal{N}_{e}$.
The bubble function $\psi_e$ has the following properties: $\psi_{e} \in \mathbb{P}_{4d}(\mathcal{N}_{e})$, $\psi_{e} \in C^2(\mathcal{N}_{e})$, and $\psi_{e} = 0$ on $\partial \mathcal{N}_{e}$. 
In addition, it satisfies
\begin{equation*}
 \nabla \psi_{e} = 0 \textrm{ on } \partial \mathcal{N}_{e}, \quad 
\llbracket \nabla \psi_e\cdot\mathbf{n}\rrbracket|_{e} = 0 \textrm{ on } e.
\end{equation*}

Given $T\in\mathcal{T}_{\ell}$, let $\Pi_{T} : L^2(T) \to \mathbb{P}_{0}(T)$ be the orthogonal projection operator into constant functions over $T$. In particular, for $v\in L^{2}(\Omega)$, we have $\Pi_{T}v:=\tfrac{1}{|T|}\int_{T}v$ and $\|\Pi_T v\|_{T}\leq \|v\|_T$.

With all these ingredients at hand, we are ready to prove the local efficiency of the aforementioned error estimators. We start with ${\eta}_{st,2}$, defined in \eqref{def:state_indicator}.

\begin{lemma}[local efficiency of ${\eta}_{st,2}$]\label{lemma:efficiency_est_2}
Let $\bar{u}\in \mathbb{U}_{ad}$ be the unique solution to problem \eqref{def:weak_ocp}--\eqref{eq:weak_st_eq} with $\bar{y}$ being the corresponding optimal state. Let $\bar{\mathfrak{u}}_{\ell}\in \mathbb{U}_{ad}$ be a solution to the semi-discrete problem with $\bar{y}_{\ell}$ being the corresponding discrete state variable. 
Then, for $T\in\mathcal{T}_{\ell}$, the local error indicator $E_{st,T}$, defined as in \eqref{def:state_indicator}, satisfies
\begin{align*}
E_{st,T}^2
\lesssim 
\|\bar{y} - \bar{y}_{\ell}\|_{\mathcal{N}_T}^2
+
h_{T}^{4 - d}\|\bar{u}-\bar{\mathfrak{u}}_{\ell}\|_{L^1(\mathcal{N}_T)}^2
+
\sum_{T'\in\mathcal{N}_{T}}h_{T}^{4}\|(\bar{\mathfrak{u}}_{\ell} + f) - \Pi_{T}(\bar{\mathfrak{u}}_{\ell} + f )\|_{T'}^{2},
\end{align*}
where $\mathcal{N}_T$ is defined as in \eqref{def:patch} and the hidden constant is independent of continuous and discrete optimal variables, the size of the elements in the mesh $\mathcal{T}_{\ell}$, and $\#\mathcal{T}_{\ell}$.
\end{lemma}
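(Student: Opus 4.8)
The proof is a localized residual‑efficiency argument of Verfürth type, with one essential twist: since $E_{st,T}$ in \eqref{def:state_indicator} carries the scaling of an $L^{2}$‑norm estimator (one power of $h_T$ smaller than an energy‑norm estimator), testing against a bubble and integrating by parts once — which would only recover $\|\nabla(\bar y-\bar y_\ell(\bar{\mathfrak u}_\ell))\|$ — is not enough. The plan is to integrate by parts twice, and therefore to test with bubble functions whose normal derivative also vanishes on the boundary of the relevant element or patch. Throughout, abbreviate $\bar y_\ell:=\bar y_\ell(\bar{\mathfrak u}_\ell)$, and note from the start that $\bar y_\ell$ is piecewise linear, so $\Delta\bar y_\ell|_T=0$ for every $T\in\T_\ell$, and that $J_e:=\llbracket\nabla\bar y_\ell\cdot\mathbf{n}\rrbracket|_e$ is constant on each internal side $e$. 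Subtracting the elementwise integration by parts of $\bar y_\ell$ from the weak state equation \eqref{eq:weak_st_eq} gives the identity
\[
(\nabla(\bar y-\bar y_\ell),\nabla v)_\Omega=(\bar u,v)_\Omega-\sum_{e'\in\mathcal E_\ell}(J_{e'},v)_{e'}\qquad\forall v\in H_0^1(\Omega),
\]
which will be tested against suitable localized functions.

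\emph{Element residual.} Fix $T\in\T_\ell$, set $c_T:=\Pi_T\bar{\mathfrak u}_\ell$ (if $c_T=0$ the bound is trivial), and test \eqref{eq:weak_st_eq} with $v_T:=\varphi_T^2c_T$ extended by zero outside $T$; then $v_T\in H_0^1(\Omega)$, $v_T=0$ on $\partial T$ and $\nabla v_T=2c_T\varphi_T\nabla\varphi_T=0$ on $\partial T$. Since $\Delta\bar y_\ell|_T=0$ and $v_T$ vanishes on $\partial T$, one has $(\nabla\bar y_\ell,\nabla v_T)_T=0$, so that $(\bar u,v_T)_T=(\nabla(\bar y-\bar y_\ell),\nabla v_T)_T=-(\bar y-\bar y_\ell,\Delta v_T)_T$, the last equality by integration by parts (the boundary term vanishing because $\nabla v_T=0$ on $\partial T$). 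Writing $\bar u=c_T+(\bar{\mathfrak u}_\ell-c_T)+(\bar u-\bar{\mathfrak u}_\ell)$ on the left, using $\int_T\varphi_T^2\sim h_T^{d}$, $\|\varphi_T^2\|_{L^\infty(T)}\lesssim1$, $\|\Delta(\varphi_T^2)\|_T\lesssim h_T^{-2}h_T^{d/2}$, dividing by $|c_T|h_T^{d}$ and rearranging, one arrives at
\[
h_T^2\|\Pi_T\bar{\mathfrak u}_\ell\|_T\lesssim\|\bar y-\bar y_\ell\|_T+h_T^2\|\bar{\mathfrak u}_\ell-\Pi_T\bar{\mathfrak u}_\ell\|_T+h_T^{2-\frac d2}\|\bar u-\bar{\mathfrak u}_\ell\|_{L^1(T)},
\]
whence, by $\|\bar{\mathfrak u}_\ell\|_T\le\|\Pi_T\bar{\mathfrak u}_\ell\|_T+\|\bar{\mathfrak u}_\ell-\Pi_T\bar{\mathfrak u}_\ell\|_T$ and squaring,
\[
h_T^4\|\bar{\mathfrak u}_\ell\|_T^2\lesssim\|\bar y-\bar y_\ell\|_T^2+h_T^4\|\bar{\mathfrak u}_\ell-\Pi_T\bar{\mathfrak u}_\ell\|_T^2+h_T^{4-d}\|\bar u-\bar{\mathfrak u}_\ell\|_{L^1(T)}^2.
\]

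\emph{Edge residual.} Fix $e\in\mathcal E_T$ (if $J_e=0$ the bound is trivial) and test the displayed identity with $v:=J_e\psi_e$ extended by zero outside $\mathcal N_e$; since $\psi_e\in C^2(\mathcal N_e)$ with $\psi_e=0$ and $\nabla\psi_e=0$ on $\partial\mathcal N_e$, this $v$ lies in $H_0^1(\Omega)$. Because $\psi_e$ vanishes on $\partial\mathcal N_e$, the only surviving side term on the right is $(J_e,J_e\psi_e)_e=J_e^2\int_e\psi_e$; integrating by parts on each of $T^{\pm}$ and using $\nabla\psi_e=0$ on $\partial\mathcal N_e$ together with $\llbracket\nabla\psi_e\cdot\mathbf{n}\rrbracket=0$ on $e$ (the two contributions on $e$ canceling by continuity of $\bar y-\bar y_\ell$), the left‑hand side equals $-J_e(\bar y-\bar y_\ell,\Delta\psi_e)_{\mathcal N_e}$. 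Splitting $\bar u=\bar{\mathfrak u}_\ell+(\bar u-\bar{\mathfrak u}_\ell)$, using $\int_e\psi_e\sim h_T^{d-1}$, $\|\psi_e\|_{\mathcal N_e}\lesssim h_T^{d/2}$, $\|\psi_e\|_{L^\infty(\mathcal N_e)}\lesssim1$, $\|\Delta\psi_e\|_{\mathcal N_e}\lesssim h_T^{-2}h_T^{d/2}$, dividing by $|J_e|$ and rearranging, one obtains
\[
h_T^3\|J_e\|_e^2\lesssim h_T^4\|\bar{\mathfrak u}_\ell\|_{\mathcal N_e}^2+\|\bar y-\bar y_\ell\|_{\mathcal N_e}^2+h_T^{4-d}\|\bar u-\bar{\mathfrak u}_\ell\|_{L^1(\mathcal N_e)}^2,
\]
and the first term on the right is controlled, elementwise, by the element‑residual bound already proved, using $\|\bar{\mathfrak u}_\ell-\Pi_{T'}\bar{\mathfrak u}_\ell\|_{T'}\le\|\bar{\mathfrak u}_\ell-\Pi_T\bar{\mathfrak u}_\ell\|_{T'}$ and $h_{T'}\sim h_T$ for $T'\in\mathcal N_e$.

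\emph{Conclusion and main obstacle.} Summing the edge bound over the finitely many $e\in\mathcal E_T$, adding the element bound, and collecting terms over $\mathcal N_T$ (recall $\mathcal N_e\subset\mathcal N_T$ for $e\in\mathcal E_T$) yields the asserted estimate for $E_{st,T}^2=h_T^4\|\bar{\mathfrak u}_\ell\|_T^2+\sum_{e\in\mathcal E_T}h_T^3\|J_e\|_e^2$. The crux of the argument is the element residual: one must recognize that the plain interior bubble together with a single integration by parts only delivers the efficiency of the energy‑norm indicator $h_T^2\|\bar{\mathfrak u}_\ell\|_T^2+\sum_e h_T\|J_e\|_e^2$, one power of $h_T$ too large for \eqref{def:state_indicator}; forcing the second integration by parts dictates the use of $\varphi_T^2$ on elements and of the $C^2$ bubble $\psi_e$ on patches. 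Once this is set up, the appearance of the $L^1$ control term with the factor $h_T^{4-d}$ is simply the outcome of estimating $(\bar u-\bar{\mathfrak u}_\ell,\cdot)$ through $\|\cdot\|_{L^\infty}$ and bookkeeping the powers of $h_T$; no convexity of $\Omega$ or $H^2$‑regularity is needed, only $\bar y\in H_0^1(\Omega)$ and the polynomial nature of $\bar y_\ell$, $\varphi_T$, $\psi_e$.
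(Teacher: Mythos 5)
Your proof is correct and follows essentially the same route as the paper: the same residual identity, the squared interior bubble $\varphi_T^2$ tested against $\Pi_T\bar{\mathfrak u}_\ell$ with a second integration by parts, the $C^2$ edge bubble $\psi_e$ for the jump term, and the $L^1$--$L^\infty$ pairing for the control contribution, yielding identical scalings. The only (harmless) cosmetic differences are that you exploit the constancy of $\llbracket\nabla\bar y_\ell(\bar{\mathfrak u}_\ell)\cdot\mathbf{n}\rrbracket|_e$ to avoid the extension operator the paper invokes, and you make explicit the step $\|\bar{\mathfrak u}_\ell-\Pi_{T'}\bar{\mathfrak u}_\ell\|_{T'}\le\|\bar{\mathfrak u}_\ell-\Pi_{T}\bar{\mathfrak u}_\ell\|_{T'}$ needed to state the bound with $\Pi_T$ on neighboring elements.
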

\begin{proof} Let $v \in H_0^{1}(\Omega)$ be such that $v|_T\in C^2(T)$ for all $T\in \mathcal{T}_{\ell}$. Use $v$ as a test function in \eqref{eq:weak_st_eq} and apply elementwise integration by parts to obtain
\begin{equation*}
( \nabla (\bar{y} - \bar{y}_{\ell}), \nabla v)_{\Omega} -  (\bar{u} - \bar{\mathfrak{u}}_{\ell},v)_{\Omega} 
=
\sum_{T\in \T_{\ell}} ( \bar{\mathfrak{u}}_{\ell} + f, v )_{T}  - \sum_{e\in\mathcal{E}_{\ell}}(\llbracket \nabla \bar{y}_{\ell}\cdot \mathbf{n} \rrbracket|_{e},v)_{e}.
\end{equation*}
At the same time, we integrate by parts, again, to arrive at
\begin{equation*}
 (\nabla (\bar{y} - \bar{y}_{\ell}), \nabla v)_{\Omega} = \sum_{e\in\mathcal{E}_{\ell}} (\llbracket \nabla v\cdot \mathbf{n} \rrbracket|_{e}, \bar{y} - \bar{y}_{\ell})_{e} - \sum_{T\in \T_{\ell}} (\bar{y} - \bar{y}_\ell, \Delta v)_{T}.
\end{equation*}
Hence, combining the two previous identities we obtain, for all $v \in H_0^{1}(\Omega)$  such that $v|_T\in C^2(T)$ for all $T\in \mathcal{T}_{\ell}$, the equality
\begin{align}\label{eq:error_eq_st}
& \sum_{e\in\mathcal{E}_{\ell}} (\llbracket \nabla v\cdot \mathbf{n} \rrbracket|_{e}, \bar{y} - \bar{y}_\ell)_{e}  - \sum_{T\in \T_{\ell}} (\bar{y} - \bar{y}_\ell, \Delta v)_{T}  -  (\bar{u} - \bar{\mathfrak{u}}_{\ell},v)_{\Omega} \\
 =
& \sum_{T\in \T_{\ell}} \left[ (\Pi_{T}(\bar{\mathfrak{u}}_{\ell} + f), v )_{T} + ((\bar{\mathfrak{u}}_{\ell}  + f) - \Pi_{T}(\bar{\mathfrak{u}}_{\ell} + f), v )_{T} \right]  - \sum_{e\in\mathcal{E}_{\ell}} ( \llbracket \nabla \bar{y}_\ell\cdot \mathbf{n} \rrbracket|_{e}, v)_{e}. \nonumber
\end{align}

We now proceed in two steps.

\underline{Step 1.} (estimation of $h_{T}^{4}\|\bar{\mathfrak{u}}_{\ell} + f\|_{T}^{2}$) Let $T\in \T_{\ell}$. 
An application of the triangle inequality gives
\begin{align}\label{eq:triangle_res_st}
h_{T}^{4}\|\bar{\mathfrak{u}}_{\ell} + f\|_{T}^{2}
\lesssim
h_{T}^{4}\|\Pi_{T}(\bar{\mathfrak{u}}_{\ell} + f)\|_{T}^{2} + h_{T}^{4}\|(\bar{\mathfrak{u}}_{\ell} + f) - \Pi_{T}(\bar{\mathfrak{u}}_{\ell} + f)\|_{T}^2.
\end{align}
To estimate $h_{T}^{4}\|\Pi_{T}(\bar{\mathfrak{u}}_{\ell} + f)\|_{T}^{2}$, we first evaluate $v = \varphi_{T}^{2}\Pi_{T}(\bar{\mathfrak{u}}_{\ell} + f)$ in \eqref{eq:error_eq_st}. 
Then, we use that $\nabla (\varphi^{2}\Pi_{T}(\bar{\mathfrak{u}}_{\ell} + f))=0$ on $\partial T$, standard properties of the bubble function $\varphi_{T}$, and the inverse estimate $\|\varphi_{T}^{2}\Pi_{T}(\bar{\mathfrak{u}}_{\ell} + f)\|_{L^{\infty}(T)} \lesssim h_{T}^{-\frac{d}{2}}\|\varphi_{T}^{2}\Pi_{T}(\bar{\mathfrak{u}}_{\ell} + f)\|_T$. 
These arguments yield
\begin{align*}
\|\Pi_T(\bar{\mathfrak{u}}_{\ell} + f)\|_{T}^{2}
\lesssim & \,
\|\bar{y} - \bar{y}_\ell\|_{T}\|\Delta (\varphi_{T}^{2}\Pi_{T}(\bar{\mathfrak{u}}_{\ell} + f))\|_T  \\
 & + (h_{T}^{-\frac{d}{2}}\|\bar{u} - \bar{\mathfrak{u}}_{\ell}\|_{L^{1}(T)} + \|(\bar{\mathfrak{u}}_{\ell} + f) - \Pi_{T}(\bar{\mathfrak{u}}_{\ell} + f)\|_{T})\|\Pi_T(\bar{\mathfrak{u}}_{\ell} + f)\|_T.
\end{align*}
Using that $\Delta (\varphi_{T}^{2} \Pi_{T}(\bar{\mathfrak{u}}_{\ell} + f)) = \Pi_{T}(\bar{\mathfrak{u}}_{\ell} + f)\Delta \varphi_{T}^{2}$ in combination with properties of $\varphi_{T}$ it follows that $\|\Delta (\varphi_{T}^{2}\Pi_{T}(\bar{\mathfrak{u}}_{\ell} + f))\|_{T} \lesssim h_{T}^{-2}\|\Pi_{T}(\bar{\mathfrak{u}}_{\ell} + f)\|_{T}$. 
Thus, we obtain 
\begin{align}\label{eq:res_T_st}
h_{T}^{4}\|\Pi_{T}(\bar{\mathfrak{u}}_{\ell} + f)\|_{T}^{2}
\lesssim &\,
\|\bar{y} - \bar{y}_\ell\|_{T}^2 + h_{T}^{4-d}\|\bar{u} - \bar{\mathfrak{u}}_{\ell}\|_{L^{1}(T)}^2 \\
& + h_T^4\|(\bar{\mathfrak{u}}_{\ell} + f) - \Pi_{T}(\bar{\mathfrak{u}}_{\ell} + f)\|_{T}^2. \nonumber
\end{align}
A combination of \eqref{eq:res_T_st} and \eqref{eq:triangle_res_st} concludes the estimation.

\underline{Step 2.} (estimation of $h_{T}^{3}\|\llbracket \nabla \bar{y}_\ell\cdot \mathbf{n} \rrbracket|_{e}\|_{e}^{2}$) 
Let $T\in \T_{\ell}$ and $e\in \mathcal{E}_{T}$. 
We begin by noticing that $\llbracket \nabla \bar{y}_\ell\cdot \mathbf{n} \rrbracket|_{e}\in \mathbb{R}$.
We thus extend the jump term $\llbracket \nabla \bar{y}_\ell\cdot \mathbf{n} \rrbracket|_{e}$, defined on $e$, to the patch $\mathcal{N}_{e}$ with the same value. 
We mention that when the jump term is not constant, such extension can also be done, for example, by using a continuation operator as in \cite[Section 3]{MR1650051}.
Hereinafter we make no distinction between the jump term and its extension.

We invoke the bubble function $\psi_{e}$, evaluate $v = \llbracket\nabla \bar{y}_\ell\cdot\mathbf{n}\rrbracket|_{e}\psi_{e}$ in \eqref{eq:error_eq_st}  and use that $\llbracket\nabla \bar{y}_\ell\cdot\mathbf{n}\rrbracket|_{e}\in \mathbb{R}$, $\psi_e \in H^2_0(\mathcal{N}_e)$, and $\llbracket\nabla \psi_{e} \cdot \mathbf{n}\rrbracket|_{e} = 0$. 
These, the fact $\Delta v =  \llbracket \nabla \bar{y}_\ell\cdot\mathbf{n} \rrbracket|_{e} \Delta\psi_e$, and basic inequalities imply that
\begin{align*}
 \|\llbracket \nabla \bar{y}_\ell\cdot \mathbf{n}\rrbracket|_{e} \psi_e^{\frac{1}{2}}\|_{e}^{2}
\lesssim &
\sum_{T' \in \mathcal{N}_{e}} \left( h_{T'}^{-2}\|\bar{y} - \bar{y}_{\ell}\|_{T'}  + \|\bar{\mathfrak{u}}_{\ell} + f \|_{T'}  \right. \\
& \left. + \,  h_{T'}^{-\frac{d}{2}}\|\bar{u} - \bar{\mathfrak{u}}_{\ell}\|_{L^1(T')} \right) h_{T}^{\frac{1}{2}}\| \llbracket\nabla \bar{y}_\ell\cdot\mathbf{n}\rrbracket|_{e} \|_{e},
\end{align*}
upon using the bound $\|\llbracket\nabla \bar{y}_\ell\cdot\mathbf{n}\rrbracket|_{e}\psi_{e}\|_{T'} \lesssim h_{T}^{\frac{1}{2}}\| \llbracket\nabla \bar{y}_\ell\cdot\mathbf{n}\rrbracket|_{e} \|_{e}$.

With these estimates at hand, we thus use standard bubble functions arguments and the shape regularity property of the family $\{ \mathcal{T}_\ell \}$ to arrive at
\begin{align*}
h_T^{\frac{3}{2}}\| \llbracket \nabla \bar{y}_\ell\cdot\mathbf{n}\rrbracket|_{e}\|_{e}
\lesssim
\sum_{T'\in\mathcal{N}_e}  
(
\|\bar{y} - \bar{y}_{\ell}\|_{T'} + h_T^2\|\bar{\mathfrak{u}}_{\ell}  + f \|_{T'} + h_{T}^{2-\frac{d}{2}}\|\bar{u} - \bar{\mathfrak{u}}_{\ell}\|_{L^{1}(T')}
).
\end{align*}
The final estimate follows in view of  \eqref{eq:triangle_res_st}  and \eqref{eq:res_T_st}.
\end{proof}

We now study local efficiency properties of the estimator $\eta_{adj,\infty}$.

\begin{lemma}[local efficiency of ${\eta}_{adj,\infty}$]\label{lemma:efficiency_adj_infty}
Let $\bar{u}\in \mathbb{U}_{ad}$ be the unique solution to problem \eqref{def:weak_ocp}--\eqref{eq:weak_st_eq} with $\bar{y}$ and $\bar{p}$ being the corresponding optimal state and adjoint state, respectively. 
Let $\bar{\mathfrak{u}}_{\ell}\in \mathbb{U}_{ad}$ be a solution to the semi-discrete problem with $\bar{y}_{\ell}$ and $\bar{p}_{\ell}$ being the corresponding discrete state and adjoint state variables, respectively.  
Then, for $T\in\mathcal{T}_{\ell}$, the local error indicator $E_{adj,\infty,T}$, defined as in \eqref{def:pointwise_estimator}, satisfies 
\begin{equation*}
E_{adj,T,\infty}^2
\lesssim
\|\bar{p}-\bar{p}_{\ell}\|_{L^\infty(\mathcal{N}_T)}^2 + h_{T}^{4 - d}\|\bar{y} - \bar{y}_{\ell}\|_{\mathcal{N}_T}^2 + \sum_{T'\in\mathcal{N}_{T}}h_{T}^{4-d}\|y_{\Omega} - \Pi_{T}y_{\Omega}\|_{T'}^{2},
\end{equation*}
where $\mathcal{N}_T$ is defined as in \eqref{def:patch} and the hidden constant is independent of continuous and discrete optimal variables, the size of the elements in the mesh $\mathcal{T}_{\ell}$, and $\#\mathcal{T}_{\ell}$.
\end{lemma}
\begin{proof}
Similar arguments to the ones that lead to \eqref{eq:error_eq_st} yield, for every $v \in H_0^{1}(\Omega)$  such that $v|_T\in C^2(T)$ ($T\in \mathcal{T}_{\ell}$), the identity
\begin{align}\label{eq:error_eq_adj}
&\sum_{e\in\mathcal{E}_{\ell}} (\llbracket \nabla v\cdot \mathbf{n} \rrbracket|_{e}, \bar{p}-\bar{p}_{\ell})_{e} - \sum_{T\in \T_{\ell}} (\bar{p}-\bar{p}_{\ell}, \Delta v)_{T}  -  (\bar{y} - \bar{y}_{\ell},v)_{\Omega} \\
&=
\sum_{T\in \T_{\ell}}( \Pi_{T}y_{\Omega} - y_{\Omega},v)_{T} +
\sum_{T\in \T_{\ell}} ( \bar{y}_{\ell} - \Pi_{T}y_{\Omega}, v )_{T}  - \sum_{e\in\mathcal{E}_{\ell}} ( \llbracket \nabla \bar{p}_{\ell}\cdot \mathbf{n} \rrbracket|_{e}, v)_{e}. \nonumber \hspace{-0.3cm}
\end{align}
We now estimate the terms in \eqref{def:pointwise_estimator} in two steps.

\underline{Step 1.} (estimation of $h_{T}^{4-d}\|\bar{y}_{\ell} - y_{\Omega}\|_{T}^{2}$) Let $T\in\T_{\ell}$. 
To estimate this term in \eqref{def:pointwise_estimator}, we first use the triangle inequality to obtain
\begin{equation}\label{eq:triangle_st_2}
h_{T}^{4-d}\|\bar{y}_{\ell} - y_{\Omega}\|_{T}^{2}
\lesssim
h_{T}^{4-d}\|\bar{y}_{\ell} - \Pi_{T}y_{\Omega}\|_{T}^{2}
+
h_{T}^{4-d}\| \Pi_{T}y_{\Omega} - y_{\Omega}\|_{T}^{2}.
\end{equation}
We recall that $\Pi_{T}$ denotes the orthogonal projection operator into constant functions over $T$. 
To control $h_T^{4-d}\|\bar{y}_{\ell} - \Pi_{T}y_{\Omega}\|_{T}^{2}$ in the above inequality, we invoke the interior bubble function $\varphi_{T}$ and choose $v=\varphi_{T}^{2}(\bar{y}_{\ell} - \Pi_{T}y_{\Omega})$ in \eqref{eq:error_eq_adj}. 
Hence, using standard properties of $\varphi_{T}$ and the inequality  $
\|\Delta(\varphi_{T}^{2}(\bar{y}_{\ell} - \Pi_{T}y_{\Omega}))\|_{L^{1}(T)} 
\lesssim 
h_{T}^{\frac{d}{2}-2}\|\bar{y}_{\ell} - \Pi_{T}y_{\Omega}\|_{T}
$, which follows from the inverse estimate \cite[Lemma 4.5.3]{MR2373954}, we obtain
\begin{align*}
\|\bar{y}_{\ell} - \Pi_{T}y_{\Omega}\|_{T} 
\lesssim
\|\Pi_{T}y_{\Omega} - y_{\Omega}\|_{T} + \|\bar{y} - \bar{y}_{\ell}\|_{T} + h_{T}^{\frac{d}{2}-2}\|\bar{p} - \bar{p}_{\ell}\|_{L^{\infty}(T)},
\end{align*}
from which we conclude that
\begin{equation}\label{eq:estimate_residual_adj_2}
h_{T}^{4-d}\|\bar{y}_{\ell} - \Pi_{T}y_{\Omega}\|_{T}^{2}
\lesssim h_{T}^{4-d}\|\Pi_{T}y_{\Omega} - y_{\Omega}\|_{T}^2 + h_{T}^{4-d}\|\bar{y} - \bar{y}_{\ell}\|_{T}^2 + \|\bar{p} - \bar{p}_{\ell}\|_{L^{\infty}(T)}^{2}.
\end{equation}
The use of the latter in \eqref{eq:triangle_st_2} results in the desired bound.

\underline{Step 2.} (estimation of $h_T^{2}\|\llbracket\nabla \bar{p}_{\ell}\cdot \mathbf{n}\rrbracket|_{e}\|_{L^\infty(e)}^{2}$)  
Let $T\in \T_{\ell}$ and $e\in \mathcal{E}_{T}$. 
We choose $v = \llbracket \nabla \bar{p}_{\ell}\cdot \mathbf{n} \rrbracket|_{e}\psi_{e}$ in \eqref{eq:error_eq_adj}  and use that $\llbracket \nabla \bar{p}_{\ell}\cdot \mathbf{n} \rrbracket|_{e}\in \mathbb{R}$, $\psi_e \in H^2_0(\mathcal{N}_e)$, and $\llbracket\nabla \psi_{e} \cdot \mathbf{n}\rrbracket|_{e} = 0$. 
Consequently, basic inequalities imply that
\begin{align*}
\|\llbracket \nabla \bar{p}_{\ell}\cdot \mathbf{n} \rrbracket|_{e}\|_{e}^{2}
\lesssim & \,
\sum_{T'\in \mathcal{N}_{e}} \big(h_{T}^{\frac{d}{2}-2}\|\bar{p} - \bar{p}_{\ell}\|_{L^{\infty}(T')} \\
& + \|\bar{y} - \bar{y}_{\ell}\|_{T'} + \|\bar{y}_{\ell} - y_{\Omega}\|_{T'}\big)\|\psi_e\llbracket \nabla \bar{p}_{\ell}\cdot \mathbf{n} \rrbracket|_{e}\|_{T'}.
\end{align*}
Therefore, using that $\|\psi_e\llbracket \nabla \bar{p}_{\ell}\cdot \mathbf{n} \rrbracket|_{e}\|_{T'}\lesssim h_{T}^{\frac{1}{2}} \|\llbracket \nabla \bar{p}_{\ell}\cdot \mathbf{n} \rrbracket|_{e}\|_{e}$ in combination with estimate \eqref{eq:estimate_residual_adj_2}, we conclude that 
\begin{align}\label{eq:estimate_jump_pl}
h_{T}^{3}\|\llbracket \nabla \bar{p}_{\ell}\cdot \mathbf{n} \rrbracket|_{e}\|_{e}^{2}
\lesssim &
\sum_{T'\in \mathcal{N}_{e}}\big(h_{T}^{d}\|\bar{p} - \bar{p}_{\ell}\|_{L^{\infty}(T')}^{2} \\
& \, +  h_{T}^{4}\|\bar{y} - \bar{y}_{\ell}\|_{T'}^{2} + h_{T}^{4}\|\Pi_{T}y_{\Omega} - y_{\Omega}\|_{T'}^{2} \big). \nonumber
\end{align}
On the other hand, since $\llbracket \nabla \bar{p}_{\ell}\cdot \mathbf{n} \rrbracket|_{e}\in \mathbb{R}$, we deduce that 
\begin{equation*}
\|\llbracket \nabla \bar{p}_{\ell}\cdot \mathbf{n} \rrbracket|_{e}\|_{L^{\infty}(e)}^{2} 
=
|\llbracket \nabla \bar{p}_{\ell}\cdot \mathbf{n} \rrbracket|_{e}|^{2}
=
|e|^{-1}
\|\llbracket \nabla \bar{p}_{\ell}\cdot \mathbf{n} \rrbracket|_{e}\|_{e}^{2},
\end{equation*}
where $|e|$ denotes the measure of $e$. 
In view of the shape regularity of the mesh $\T_{\ell}$ we have that $|e|\approx h_T^{d-1}$ and consequently $h_T^{2+d}\|\llbracket \nabla \bar{p}_{\ell}\cdot \mathbf{n} \rrbracket|_{e}\|_{L^{\infty}(e)}^{2}  \approx
h_T^3 \|\llbracket \nabla \bar{p}_{\ell}\cdot \mathbf{n} \rrbracket|_{e}\|_{e}^{2}$. 
Hence, using the latter in estimate \eqref{eq:estimate_jump_pl} and multiplying by $h_T^{-d}$, we arrive at
\begin{equation*}
h_{T}^{2}\|\llbracket \nabla \bar{p}_{\ell}\cdot \mathbf{n} \rrbracket|_{e}\|_{L^{\infty}(e)}^{2}
\lesssim 
\sum_{T'\in \mathcal{N}_{e}}\big(\|\bar{p} - \bar{p}_{\ell}\|_{L^{\infty}(T')}^{2} +  h_{T}^{4-d}\|\bar{y} - \bar{y}_{\ell}\|_{T'}^{2} + h_{T}^{4-d}\|\Pi_{T}y_{\Omega} - y_{\Omega}\|_{T'}^{2} \big),
\end{equation*}
 which concludes the proof.
\end{proof}

The next result is a direct consequence of Lemmas \ref{lemma:efficiency_est_2} and \ref{lemma:efficiency_adj_infty}.

\begin{theorem}[local efficiency]\label{thm:efficiency_est_ocp}
In the framework of Lemma \ref{lemma:efficiency_adj_infty} we have, for $T\in\mathcal{T}_{\ell}$, that
\begin{align*}
E_{st,T}^2  \, +\,   & E_{adj,T,\infty}^2 
\lesssim  ~ (1 + h_T^{4-d})\|\bar{y} - \bar{y}_{\ell}\|_{\mathcal{N}_T}^2
+ \|\bar{p}-\bar{p}_{\ell}\|_{L^\infty(\mathcal{N}_T)}^2 + h_{T}^{4 - d}\|\bar{u}-\bar{\mathfrak{u}}_{\ell}\|_{L^1(\mathcal{N}_T)}^2 \\
 & \qquad + 
\sum_{T'\in\mathcal{N}_{T}}h_{T}^{4}\|(\bar{\mathfrak{u}}_{\ell} + f) - \Pi_{T}(\bar{\mathfrak{u}}_{\ell} + f)\|_{T'}^{2}  + \sum_{T'\in\mathcal{N}_{T}}h_{T}^{4-d}\|y_{\Omega} - \Pi_{T}(y_{\Omega})\|_{T'}^{2},
\end{align*}
where $\mathcal{N}_T$ is defined as in \eqref{def:patch} and the hidden constant is independent of continuous and discrete optimal variables, the size of the elements in the mesh $\mathcal{T}_{\ell}$, and $\#\mathcal{T}_{\ell}$.
\end{theorem}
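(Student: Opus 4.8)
The plan is essentially bookkeeping: the three local estimators $E_{st,T}$, $E_{adj,T}$ and $E_{adj,\infty,T}$ have already been bounded individually in Lemmas \ref{lemma:efficiency_est_2}, \ref{lemma:efficiency_adj_2} and \ref{lemma:efficiency_adj_infty}, so it suffices to add the three estimates and collect the resulting terms according to their type. First I would fix $T\in\mathcal{T}_{\ell}$ and write the three bounds side by side, observing that each one is stated over the same patch $\mathcal{N}_T$ defined in \eqref{def:patch}; consequently, summing them does not enlarge the region over which the right-hand side is measured, and the estimate for $E_{st,T}^2 + E_{adj,T}^2 + E_{adj,\infty,T}^2$ remains local in the sense required.

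Next I would merge like terms. The discrete-state error $\|\bar{y} - \bar{y}_{\ell}(\bar{\mathfrak{u}}_{\ell})\|_{\mathcal{N}_T}^2$ enters with prefactor $1$ from Lemma \ref{lemma:efficiency_est_2}, with prefactor $h_T^{4}$ from Lemma \ref{lemma:efficiency_adj_2}, and with prefactor $h_T^{4-d}$ from Lemma \ref{lemma:efficiency_adj_infty}, so its total coefficient is $1 + h_T^{4} + h_T^{4-d}$. The adjoint error $\|\bar{p}-\bar{p}_{\ell}\|_{L^{\infty}(\mathcal{N}_T)}^2$ appears with prefactor $h_T^{d}$ from Lemma \ref{lemma:efficiency_adj_2} and with prefactor $1$ from Lemma \ref{lemma:efficiency_adj_infty}, contributing $1 + h_T^{d}$. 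The control term $h_T^{4-d}\|\bar{u}-\bar{\mathfrak{u}}_{\ell}\|_{L^1(\mathcal{N}_T)}^2$ and the oscillation term $\sum_{T'\in\mathcal{N}_{T}}h_T^{4}\|\bar{\mathfrak{u}}_{\ell} - \Pi_T\bar{\mathfrak{u}}_{\ell}\|_{T'}^2$ arise only from Lemma \ref{lemma:efficiency_est_2}, while the data oscillation of $y_{\Omega}$ enters with prefactor $h_T^{4}$ from Lemma \ref{lemma:efficiency_adj_2} and with prefactor $h_T^{4-d}$ from Lemma \ref{lemma:efficiency_adj_infty}, yielding $\sum_{T'\in\mathcal{N}_{T}}(h_T^{4}+h_T^{4-d})\|y_{\Omega} - \Pi_T(y_{\Omega})\|_{T'}^2$. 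Assembling these contributions reproduces precisely the inequality in the statement, and since each of the three hidden constants is independent of the continuous and discrete optimal variables, of the element sizes, and of $\#\mathcal{T}_{\ell}$, the resulting constant inherits the same independence.

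There is no genuine obstacle here: the substantive analysis — the bubble-function testing, inverse estimates and error representations \eqref{eq:error_eq_st} and \eqref{eq:error_eq_adj} — was carried out in proving the three lemmas, and Theorem \ref{thm:efficiency_est_ocp} is a direct corollary. The only point requiring minimal care is the accounting of the powers of $h_T$ when combining like terms, in particular keeping track of which lemma contributes an $h_T^{4}$ and which contributes an $h_T^{4-d}$; I would deliberately refrain from simplifying the coefficients $1 + h_T^{4} + h_T^{4-d}$ and $1 + h_T^{d}$ any further, so that the estimate is valid on every $\mathcal{T}_{\ell}\in\mathbb{T}$ without a smallness assumption on the mesh size.
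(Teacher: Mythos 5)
Your proof is correct and is exactly what the paper does: Theorem \ref{thm:efficiency_est_ocp} is stated there as a direct consequence of Lemmas \ref{lemma:efficiency_est_2}, \ref{lemma:efficiency_adj_2} and \ref{lemma:efficiency_adj_infty}, obtained by adding the three local bounds and collecting like terms over the common patch $\mathcal{N}_T$. Your accounting of the prefactors $1+h_T^4+h_T^{4-d}$, $1+h_T^d$, and $h_T^4+h_T^{4-d}$ matches the statement exactly.
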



\section{Numerical examples}\label{sec:num_ex}

In this section we conduct a series of numerical examples in 2D that support our theoretical findings and illustrate the performance of the error estimator 
\begin{align}\label{def:total_estimator}
E^{2}
:=
{\eta}_{st,2}^{2} +  {\eta}_{adj,\infty}^{2}
\end{align}
that we proposed and analyzed in section \ref{sec:a_post}.
In sections \ref{sec:ex_2} and \ref{sec:ex_3} below, we go beyond the presented theory and perform numerical experiments with a non-convex domain. 
The considered numerical examples have been carried out with the help of a code that we implemented using \texttt{MATLAB$^\copyright$ (R2024a)}. 
When assembling all system matrices and the term $(\mathfrak{u}_{\ell},v_{\ell})_{\Omega}$ we have used exact integration whereas approximation errors, error indicators, and the remaining right-hand sides are computed by a quadrature formula which is exact for polynomials of degree $19$.

For a given partition $\T_{\ell}$, we seek $\bar{y}_\ell \in \mathbb{V}_{\ell}$,  $\bar{p}_\ell \in \mathbb{V}_{\ell}$, and $\bar{\mathfrak{u}}_\ell \in \mathbb{U}_{ad}$ that solve \eqref{eq:discrete_pde_semi}, \eqref{eq:discrete_adj_eq}, and \eqref{eq:pointwise_charac_discrete}.
We solve such a nonlinear system of equations by using the fixed-point iteration described in Algorithm \ref{Algorithm1}; see also \cite[Section 4]{MR2891922}. 
Once a discrete solution is obtained, we compute the error indicator 
\begin{align}\label{def:total_indicator}
E_{T} := \left(E_{st,T}^{2\beta} + E_{adj,T,\infty}^{2\beta}\right)^{\frac{1}{2}} \qquad (\beta \in (0,1])
\end{align}
to drive the adaptive procedure described in Algorithm \ref{Algorithm2}.
For the numerical results, we define the total number of degrees of freedom $\rm{Ndofs} = 2\:\rm{dim}\mathbb{V}_{\ell}$.
Finally, we define the effectivity index 
\begin{equation*}
\mathcal{I}_{eff}:= \frac{E}{(\|\bar{u} - \bar{\mathfrak{u}}_{\ell}\|_{L^1(\Omega)}^{2} + \|\bar{y} - \bar{y}_{\ell}\|_{\Omega}^{2} + \|\bar{p} - \bar{p}_{\ell}\|_{L^{\infty}(\Omega)}^{2})^{\frac{1}{2}}}.
\end{equation*}

\begin{algorithm}[ht]
\caption{\textbf{Fixed-point Iteration.}}
\label{Algorithm1}
\SetKwInput{set}{Set}
\SetKwInput{ase}{while $tol > 10^{-10}$ do}
\SetKwInput{al}{endwhile}
\SetKwInput{ret}{Return}
\SetKwInput{Input}{Input}
\SetKwInput{Output}{Output}
\SetAlgoLined
\Input{Mesh $\T_{\ell}$, initial variables $(y_{\ell}^{0},p_{\ell}^{0},\mathfrak{u}_{\ell}^{0})$, constraints $a$ and $b$, desired state $y_\Omega$, right-hand side $f$, and $tol=1$.}
\set{$i=0$.}
\While{$tol > 10^{-10}$}{
Obtain $y_{\ell}^{i+1}$ by solving 
$
(\nabla y_{\ell}^{i+1},\nabla v_\ell)_{\Omega}
=
(\mathfrak{u}_{\ell}^{i} + f, v_\ell)_{\Omega} 
$ for all $v_\ell\in \mathbb{V}_{\ell}$;
\\
Obtain $p_{\ell}^{i+1}$ by solving
$
(\nabla p_{\ell}^{i+1},\nabla v_\ell)_{\Omega}
=
(y_{\ell}^{i+1} - y_{\Omega}, v_\ell)_{\Omega}$ for all $v_\ell\in \mathbb{V}_{\ell}$;
\\
Set $\mathfrak{u}_{\ell}^{i+1} = - \text{sign}(p_{\ell}^{i+1})$ (when $a=-1$ and $b=1$);
\\
Set 
$
tol= \left( \sum_{\texttt{v}\in \mathcal{V}_{\ell}}\left[\left(y_{\ell}^{i+1}(\texttt{v}) - y_{\ell}^{i}(\texttt{v})\right)^{2} + \left(p_{\ell}^{i+1}(\texttt{v}) - p_{\ell}^{i}(\texttt{v})\right)^{2}\right]  \right)^{\frac{1}{2}},
$
where $\mathcal{V}_{\ell}$ denotes the set of vertices of $\mathcal{T}_{\ell}$;
\\
Set $i \leftarrow i + 1$, and repeat.
}
\Output{Approximate optimal solutions $(y_{\ell}^{i},p_{\ell}^{i},\mathfrak{u}_{\ell}^{i})=(\bar{y}_{\ell},\bar{p}_{\ell},\bar{\mathfrak{u}}_{\ell})$.}
\end{algorithm}

\begin{algorithm}[ht]
\caption{\textbf{Adaptive Algorithm.}}
\label{Algorithm2}
\SetKwInput{set}{Set}
\SetKwInput{ase}{Fixed-point iteration}
\SetKwInput{al}{A posteriori error estimation}
\SetKwInput{Input}{Input}
\SetAlgoLined
\Input{Initial mesh $\T_0$, initial variables $(y_{0},p_{0},\mathfrak{u}_{0})$, constraints $a$ and $b$, desired state $y_\Omega$, right-hand side $f$, and $tol=1$.}
\set{$\ell=0$.}
\ase{}
Compute $[\bar{y}_{\ell},\bar{p}_{\ell},\bar{\mathfrak{u}}_{\ell}]=\textbf{Fixed-point iteration}[{\mathcal{T}_{\ell}},(y_{0},p_{0},\mathfrak{u}_{0}),a,b, y_\Omega, f,tol]$ by using Algorithm \ref{Algorithm1};
\\
\al 
\\
For each $T\in{\T_\ell}$ compute the local error indicator $E_{T}$ given in \eqref{def:total_indicator};
\\
Mark an element $T$ for refinement if $E_{T}> \displaystyle\frac{1}{2}\max_{T'\in{\T_\ell}} E_{T'}$;
\\
From step $\mathbf{3}$, construct a new mesh, using a longest edge bisection algorithm. Set $\ell \leftarrow \ell + 1$, and go to step $\mathbf{1}$.
\end{algorithm}
\normalsize


\subsection{Exact solution on convex domain}\label{sec:ex_1}
Following \cite[Section 3.3]{MR3095657} (see also \cite[Section 5.3]{MR3385653}) we set $\Omega:=(0,1)^{2}$, $a=-1$, $b=1$, $\beta=1$, and take $f$ and $y_{\Omega}$ such that 
\begin{align*}
  &\bar{y}(x_{1},x_{2}) = \sin(\pi x_{1})\sin(\pi x_{2}), \\ 
  & \bar{p}(x_{1},x_{2}) = -\sin(2\pi x_{1})\sin(2\pi x_{2})/(8\pi^{2}), \quad \bar{u}(x_{1},x_{2}) = -\mathrm{sign}(\bar{p}(x_{1},x_{2}))
\end{align*}
for  $(x_{1},x_{2})\in \Omega$.


\begin{figure}[!ht]
\centering
\begin{minipage}[c]{0.45\textwidth}\centering
\includegraphics[trim={0 0 0 0},clip,width=5.5cm,height=5.3cm,scale=0.30]{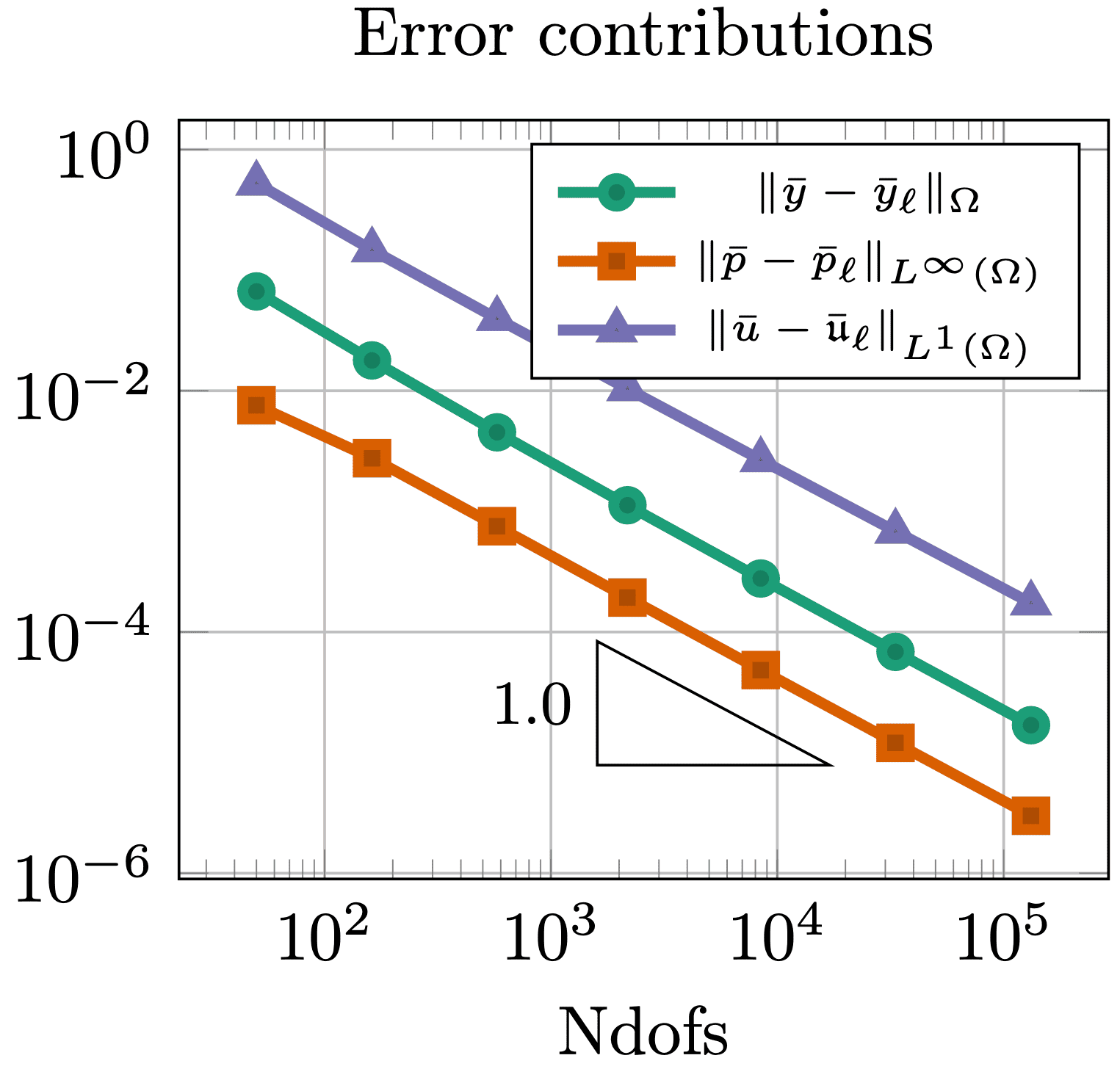}\\
\qquad
{\small{(1.A)}}
\end{minipage}
\begin{minipage}[c]{0.45\textwidth}\centering
\includegraphics[trim={0 0 0 0},clip,width=5.5cm,height=5.3cm,scale=0.30]{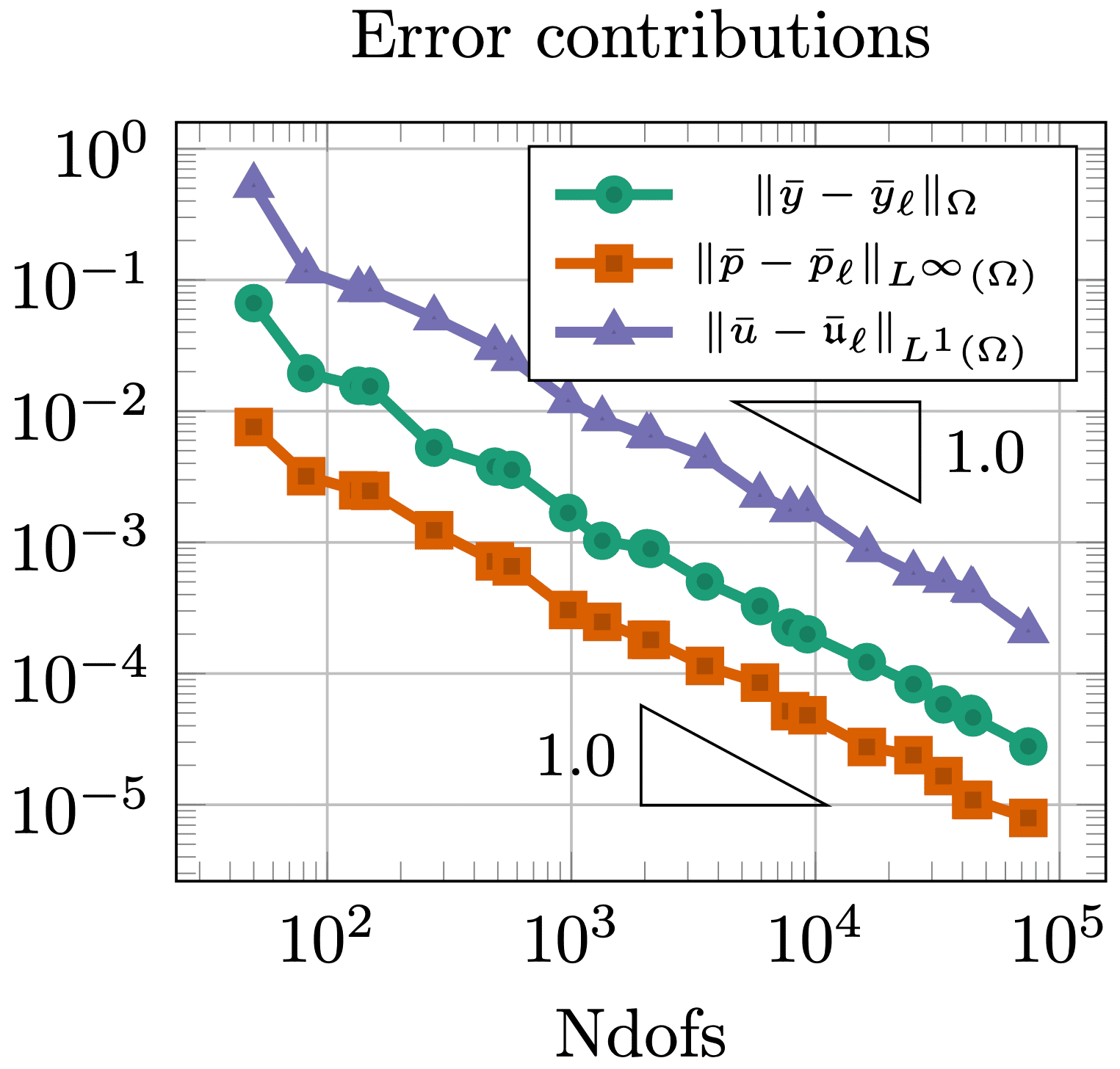}\\
\qquad
{\small{(1.B)}}
\end{minipage}
\\
\begin{minipage}[c]{0.45\textwidth}\centering
\includegraphics[trim={0 0 0 0},clip,width=5.5cm,height=5.3cm,scale=0.30]{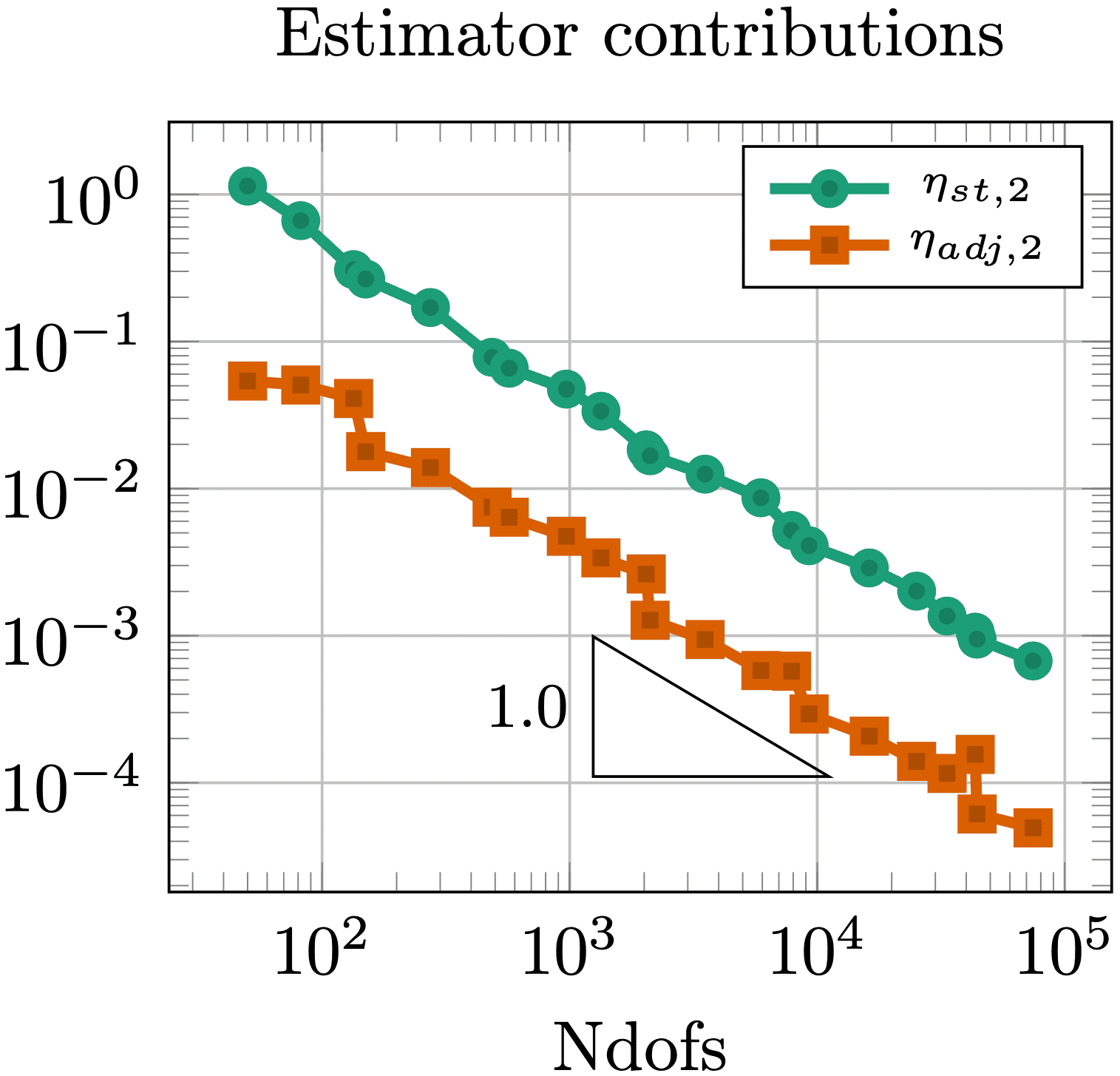}\\
\qquad
{\small{(1.C)}}
\end{minipage}
\begin{minipage}[c]{0.45\textwidth}\centering
\includegraphics[trim={0 0 0 0},clip,width=5.5cm,height=5.3cm,scale=0.30]{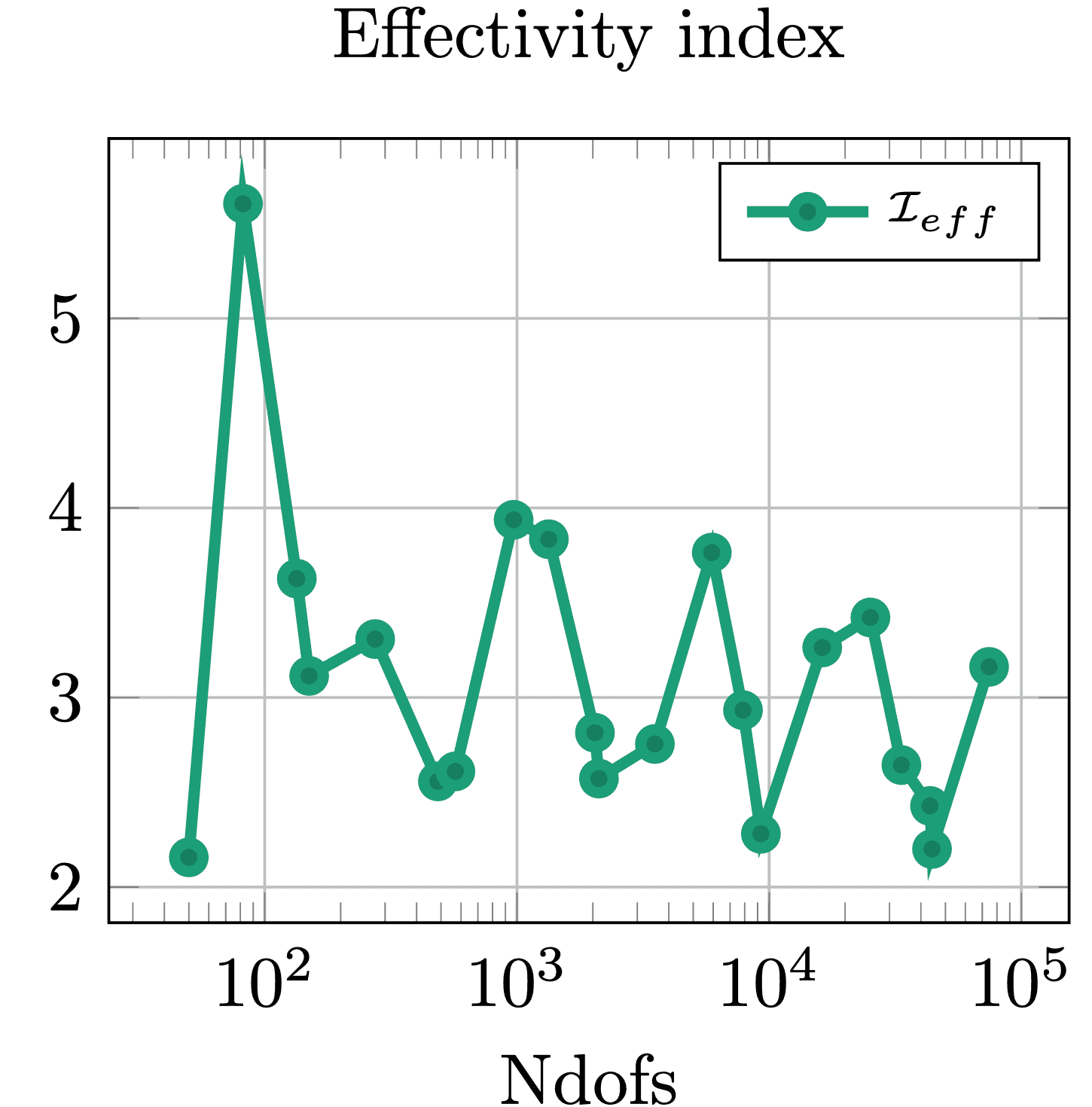}\\
\qquad
{\small{(1.D)}}
\end{minipage}
 \caption{Experimental rates of convergence for individual contributions of the total error with uniform (1.A) and adaptive (1.B) refinement, convergence rates for individual contributions of the estimator $E$ (1.C), and effectivity index (1.D) with adaptive refinement for the problem from section \ref{sec:ex_1}.}
\label{fig:ex_1}
\end{figure}


\begin{figure}[!ht]
\begin{minipage}[c]{0.32\textwidth}\centering
\includegraphics[trim={0 0 0 0},clip,width=4.0cm,height=4.0cm,scale=0.30]{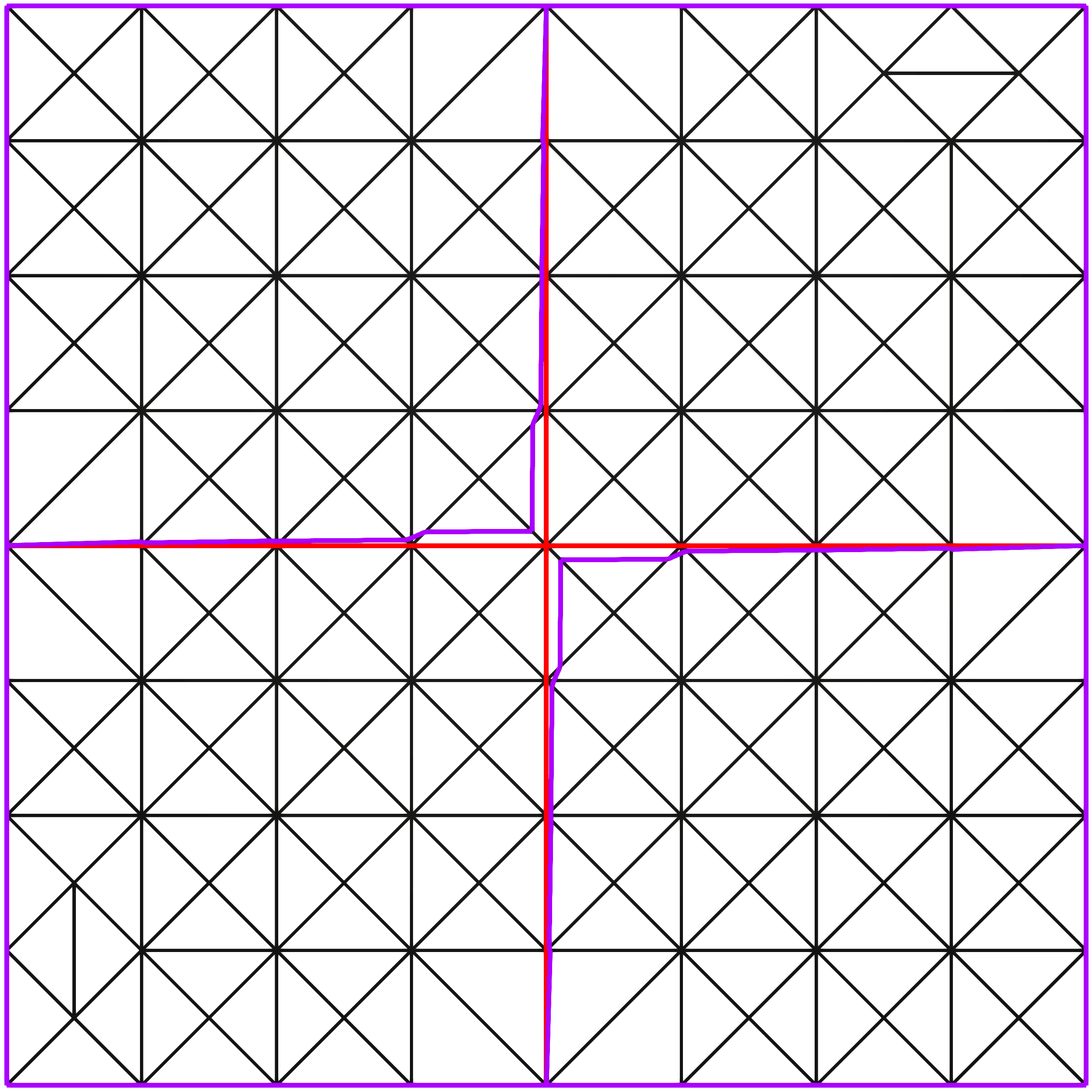}\\
{\small{(2.A)}}
\end{minipage}
\begin{minipage}[c]{0.32\textwidth}\centering
\includegraphics[trim={0 0 0 0},clip,width=4.0cm,height=4.0cm,scale=0.30]{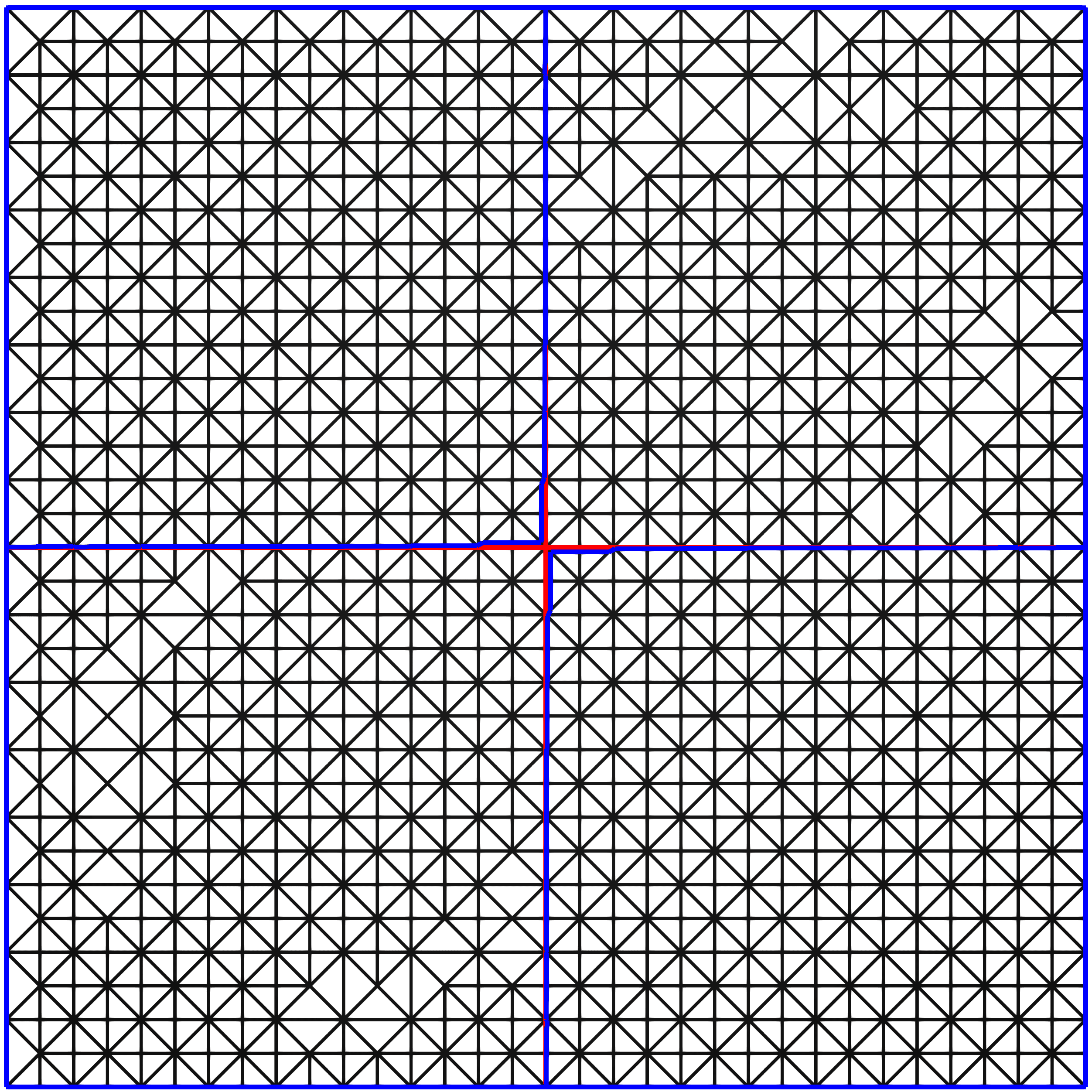}\\
{\small{(2.B)}}
\end{minipage}
\begin{minipage}[c]{0.32\textwidth}\centering
\includegraphics[trim={0 0 0 0},clip,width=4.0cm,height=4.0cm,scale=0.30]{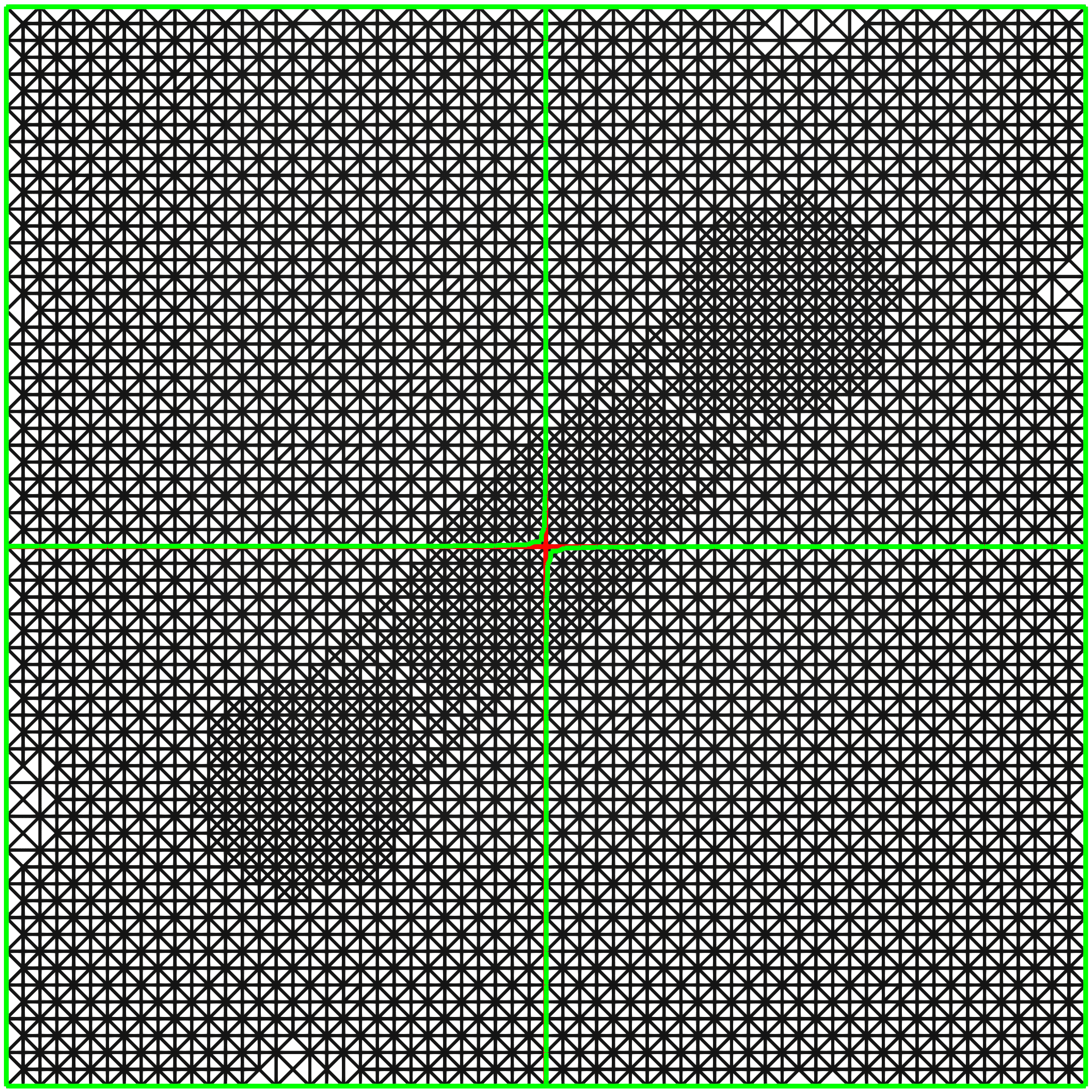}\\
{\small{(2.C)}}
\end{minipage}
\\
\begin{minipage}[c]{0.32\textwidth}\centering
\includegraphics[trim={0 0 0 0},clip,width=4.0cm,height=4.0cm,scale=0.30]{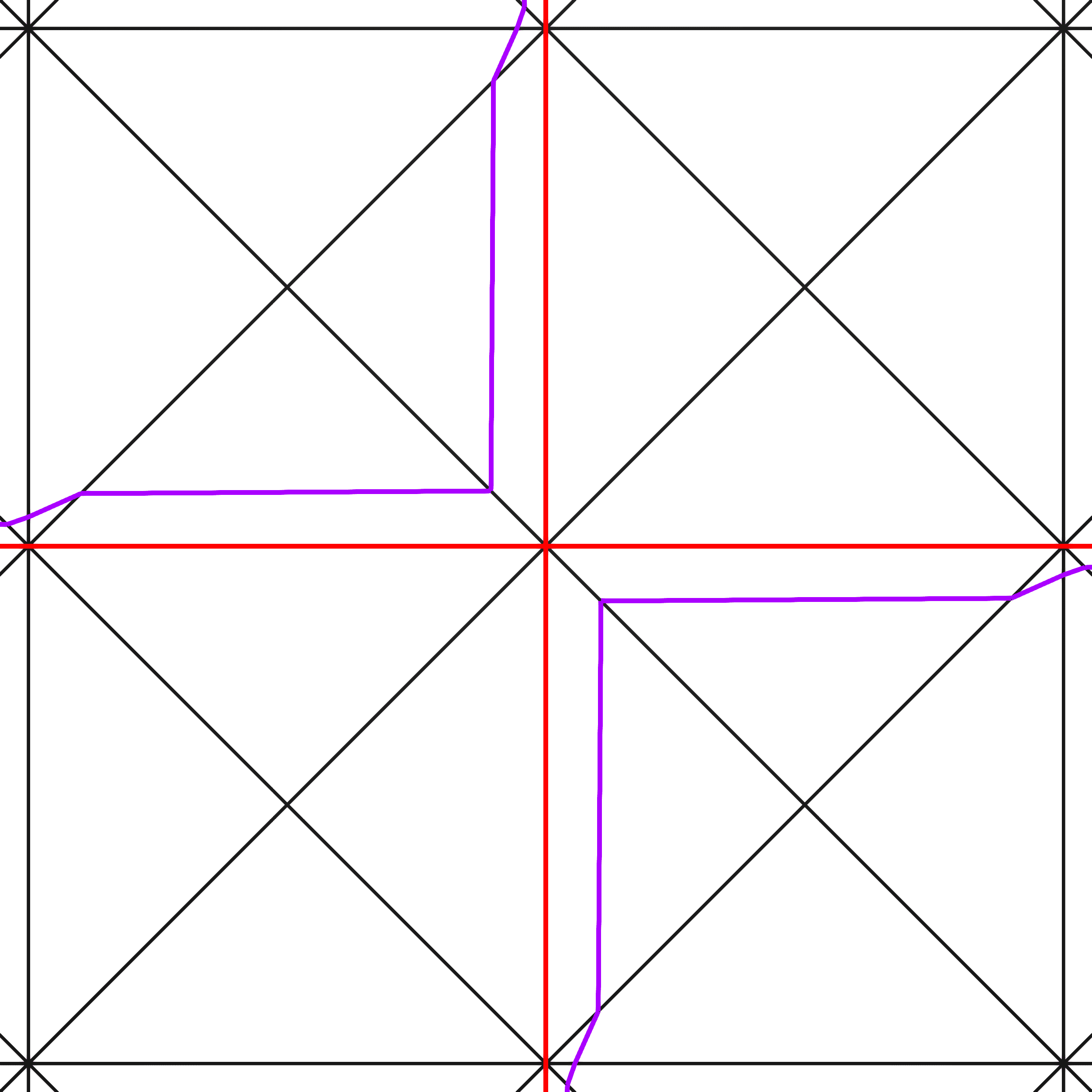}\\
{\small{(2.D)}}
\end{minipage}
\begin{minipage}[c]{0.32\textwidth}\centering
\includegraphics[trim={0 0 0 0},clip,width=4.0cm,height=4.0cm,scale=0.30]{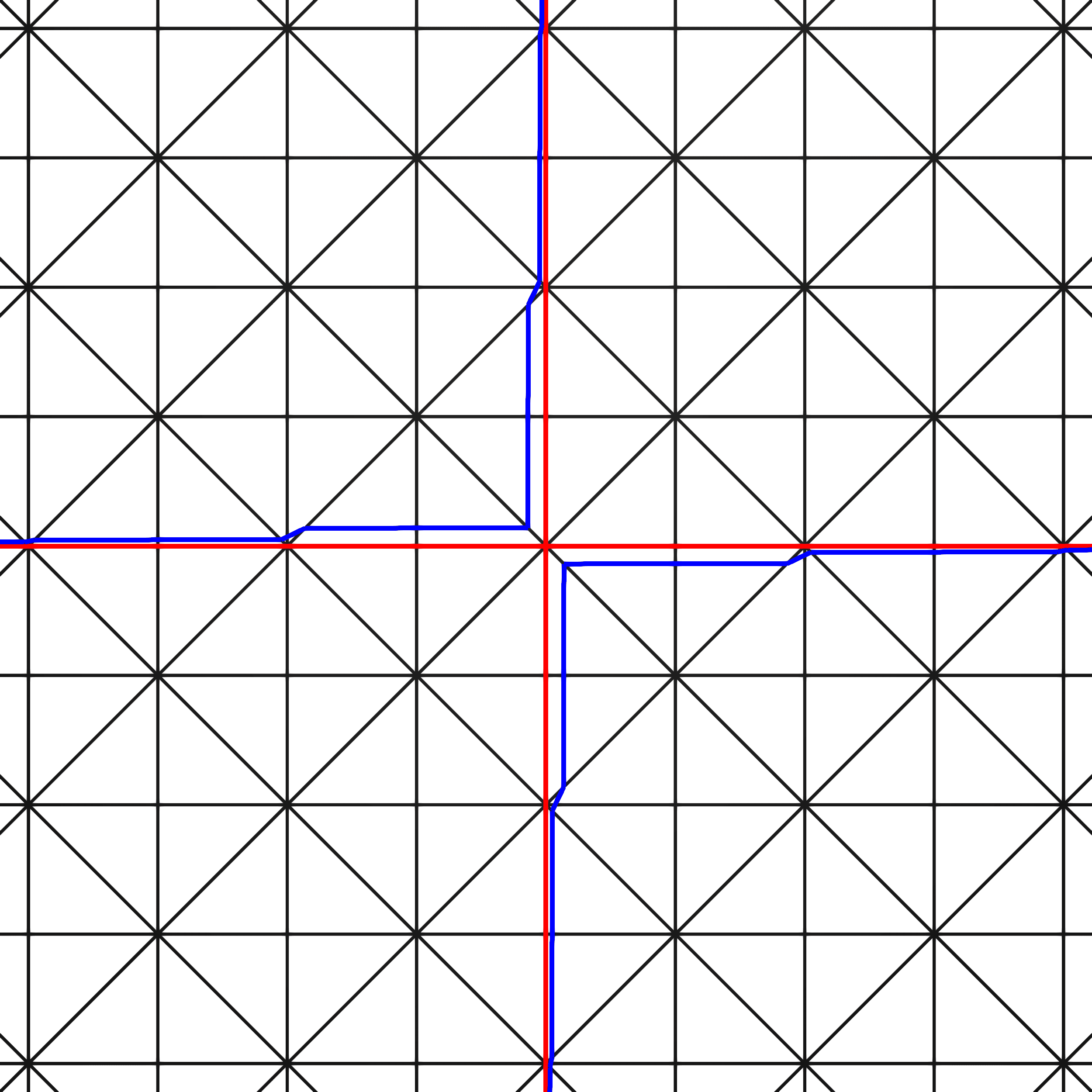}\\
{\small{(2.E)}}
\end{minipage}
\begin{minipage}[c]{0.32\textwidth}\centering
\includegraphics[trim={0 0 0 0},clip,width=4.0cm,height=4.0cm,scale=0.30]{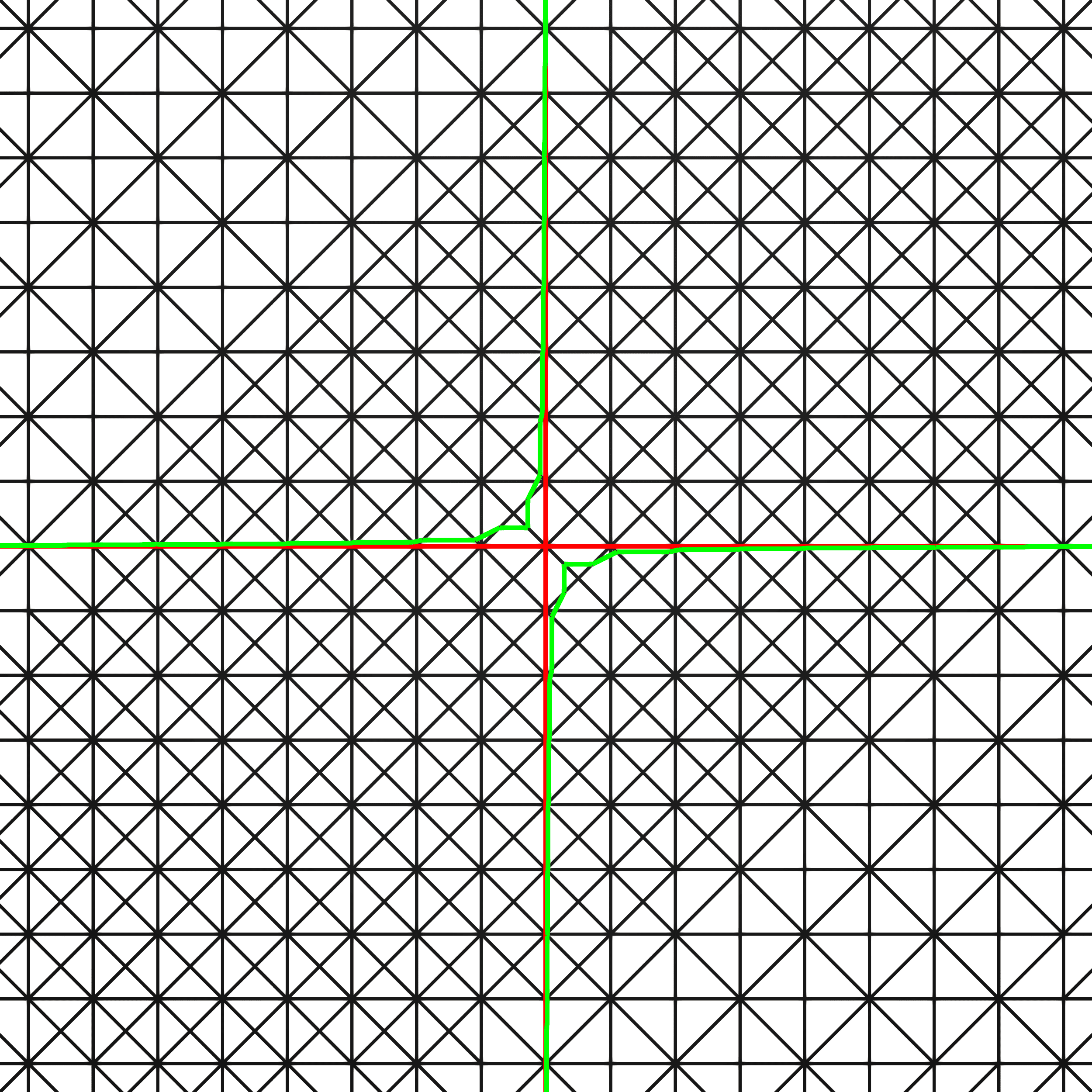}\\
{\small{(2.F)}}
\end{minipage}
\caption{Comparison of the continuous (red) and discrete switching sets on the adaptively refined meshes obtained after $5$ ((2.A) and (2.D)), $10$ ((2.B) and (2.E)), and $15$ ((2.C) and (2.F)) iterations for the problem from section \ref{sec:ex_1}.}
\label{fig:ex_1_2}
\end{figure}


\begin{figure}[!ht]
\centering
\begin{minipage}[c]{0.32\textwidth}\centering
\includegraphics[trim={0 0 0 0},clip,width=4.0cm,height=4.0cm,scale=0.30]{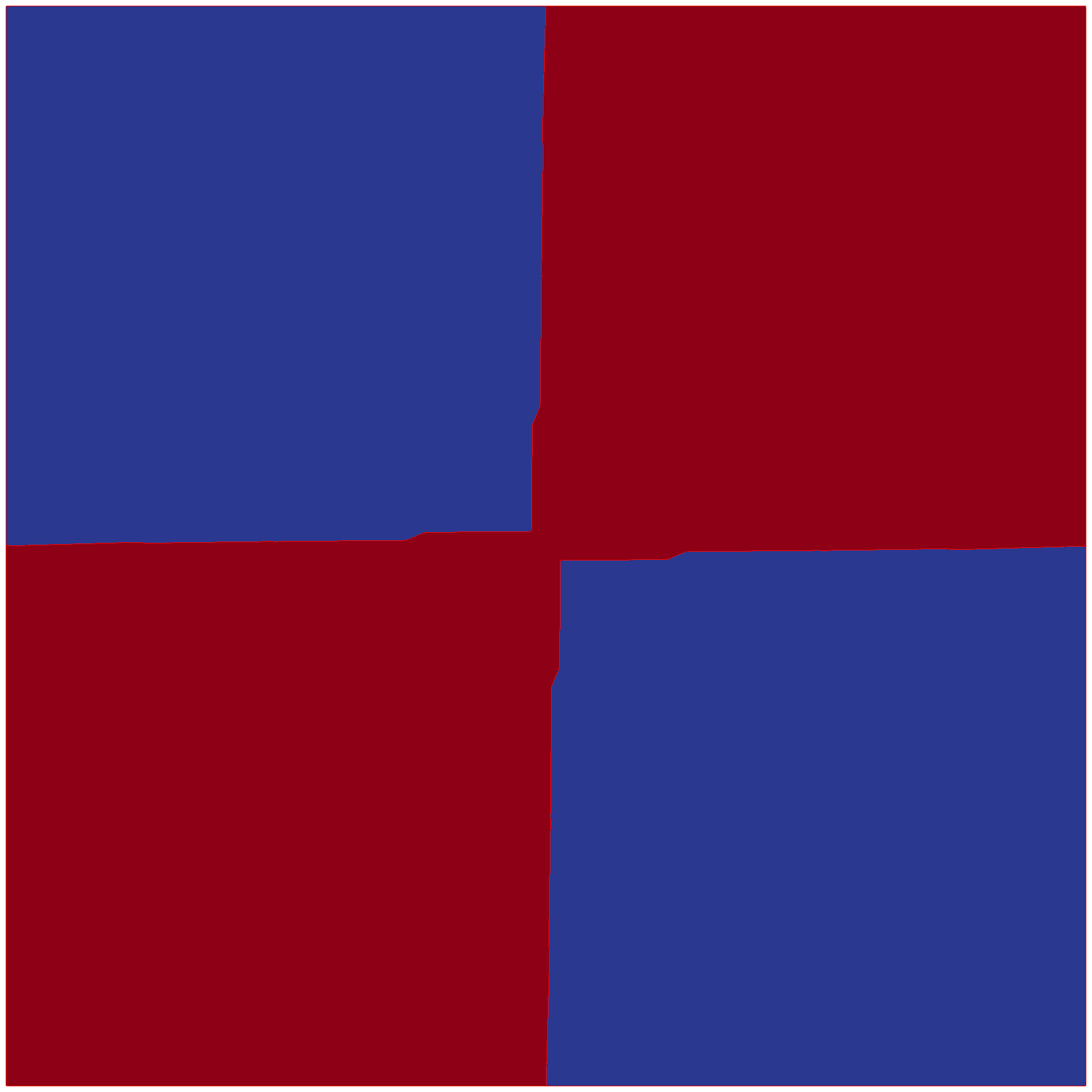}
{\small{(3.A)}}
\end{minipage}
\begin{minipage}[c]{0.32\textwidth}\centering
\includegraphics[trim={0 0 0 0},clip,width=4.0cm,height=4.0cm,scale=0.30]{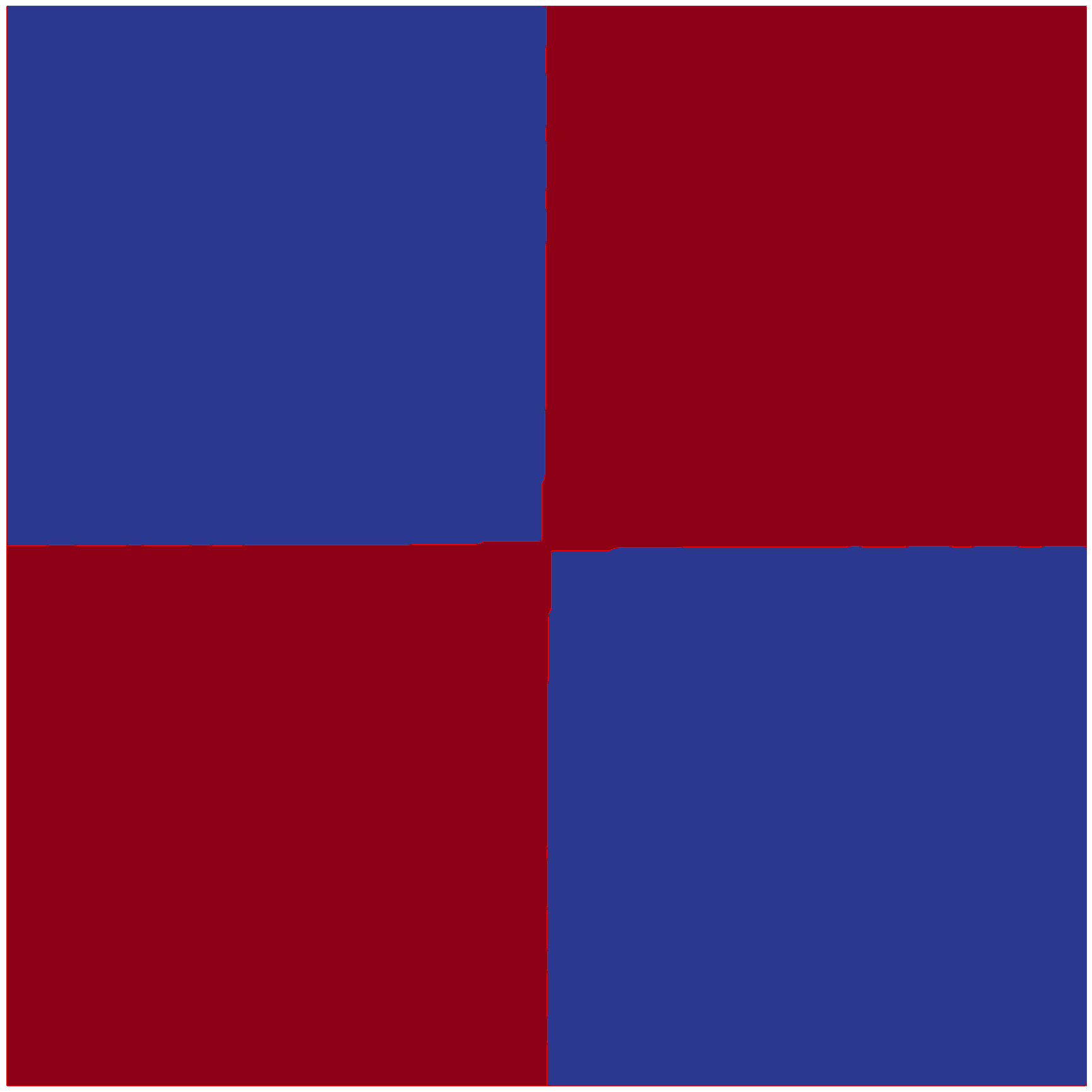}
{\small{(3.B)}}
\end{minipage}
\begin{minipage}[c]{0.32\textwidth}\centering
\includegraphics[trim={0 0 0 0},clip,width=4.0cm,height=4.0cm,scale=0.30]{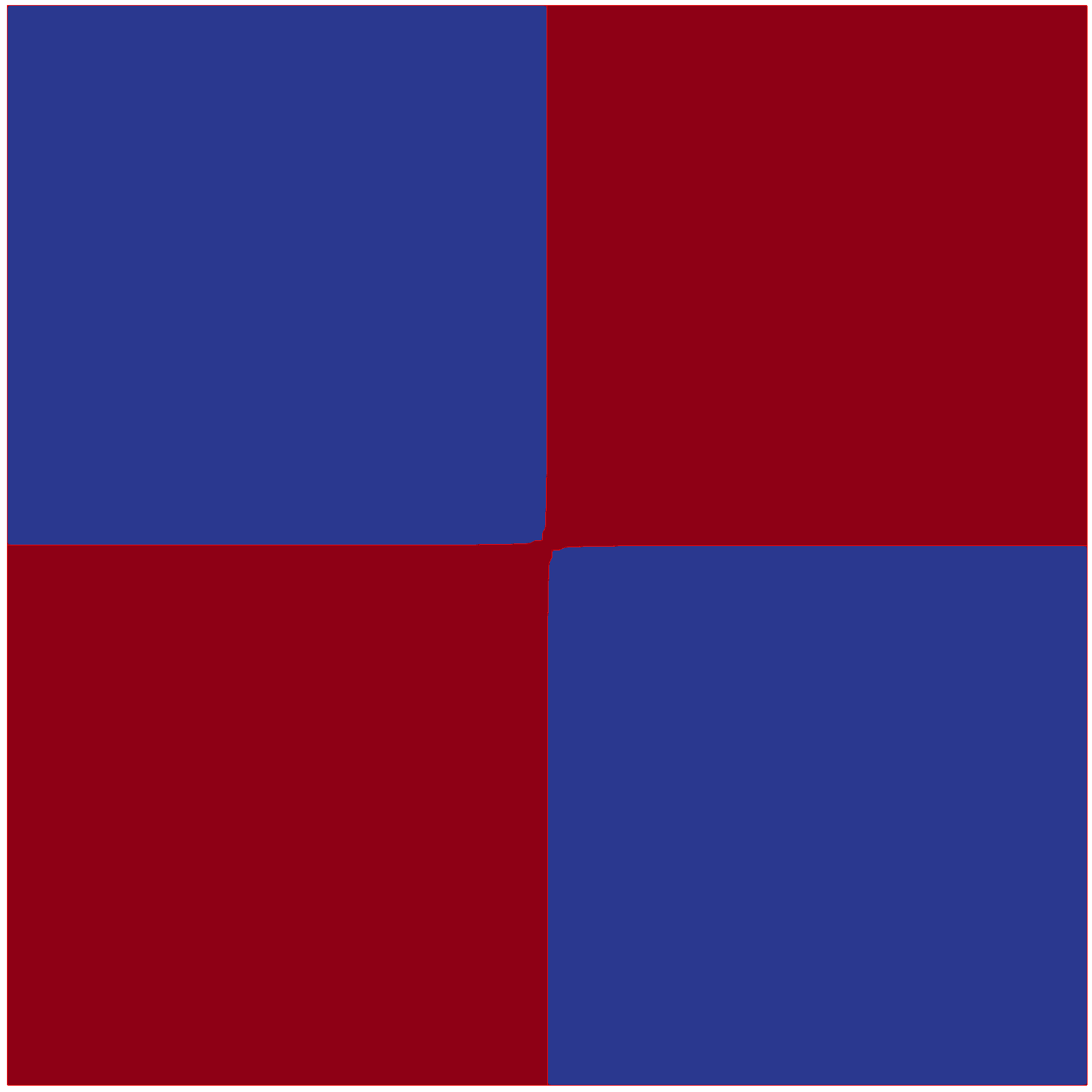}
{\small{(3.C)}}
\end{minipage}
\caption{Approximate control $\bar{\mathfrak{u}}_{\ell}$ obtained after $5$ (3.A), $10$ (3.B), and $15$ (3.C) iterations for the problem from section \ref{sec:ex_1};
in the red region the value is $1$ whereas in the blue region is $-1$.}
\label{fig:ex_1_3}
\end{figure}

In Figures \ref{fig:ex_1}, \ref{fig:ex_1_2}, and \ref{fig:ex_1_3} we display the results obtained for this example. 
We show, in Fig. \ref{fig:ex_1}, experimental rates of convergence for each contribution of the total error when uniform and adaptive refinement are considered.
We also present experimental rates of convergence for all the individual contributions of the error estimator $E$ (see \eqref{def:total_estimator}) and the effectivity index, when adaptive refinement is considered.
We observe that all the approximation errors obtained for both schemes exhibit optimal experimental rates of convergence (Figs. (1.A) and (1.B)); the same convergence rate is observed for the error estimator (Fig. (1.C)).
When the total number of degrees of freedom increases, we observe that the effectivity index stabilizes around the values $2$ and $4$ (Fig. (1.D)).
In Fig. \ref{fig:ex_1_2} we present adaptively refined meshes obtained after 5, 10, and 15 iterations.
We do not observe an explicit connection between the switching set and the adaptively refined meshes.
This is probably due to the fact that we do not have an error estimator accounting for such a set.
Even when the adaptive refinement is not necessarily concentrated near the discrete switching set, we observe that such a set seems to converge to the continuous switching set when the total number of degrees of freedom increases. 
Finally, in Fig. \ref{fig:ex_1_3}, we display the approximate optimal control $\bar{\mathfrak{u}}_{\ell}$ obtained after $5$, $10$, and $15$ iterations. 
We observe the classical bang-bang structure in the three approximations.


\subsection{Exact solution on non-convex domain}\label{sec:ex_2}
We set $\Omega=(-1,1)^2\setminus[0,1)\times(-1,0]$, $a=-1$, $b=1$, $\beta=1$, and take $f$ and $y_{\Omega}$ such that the exact optimal state and adjoint state are given, in polar coordinates $(\rho,\omega)$ with $\omega\in[0,3\pi/2]$, by
\begin{align*}\label{eq:sol_y_p_L}
\bar{y}(\rho,\omega)&=\sin(\pi(\rho\sin(\omega)+1)/2)\sin(\pi(\rho \cos(\omega)+1)/2)\rho^{2/3}\sin(2\omega/3), 
\\
\bar{p}(\rho,\omega)&= (0.5-\rho)\bar{y}(\rho,\omega).
\end{align*}
The purpose of this example is to investigate the performance of the devised a posteriori error estimator when we violate the convexity assumption considered on the domain.  


\begin{figure}[!ht]
\centering
\begin{minipage}[c]{0.45\textwidth}\centering
\includegraphics[trim={0 0 0 0},clip,width=5.5cm,height=5.3cm,scale=0.30]{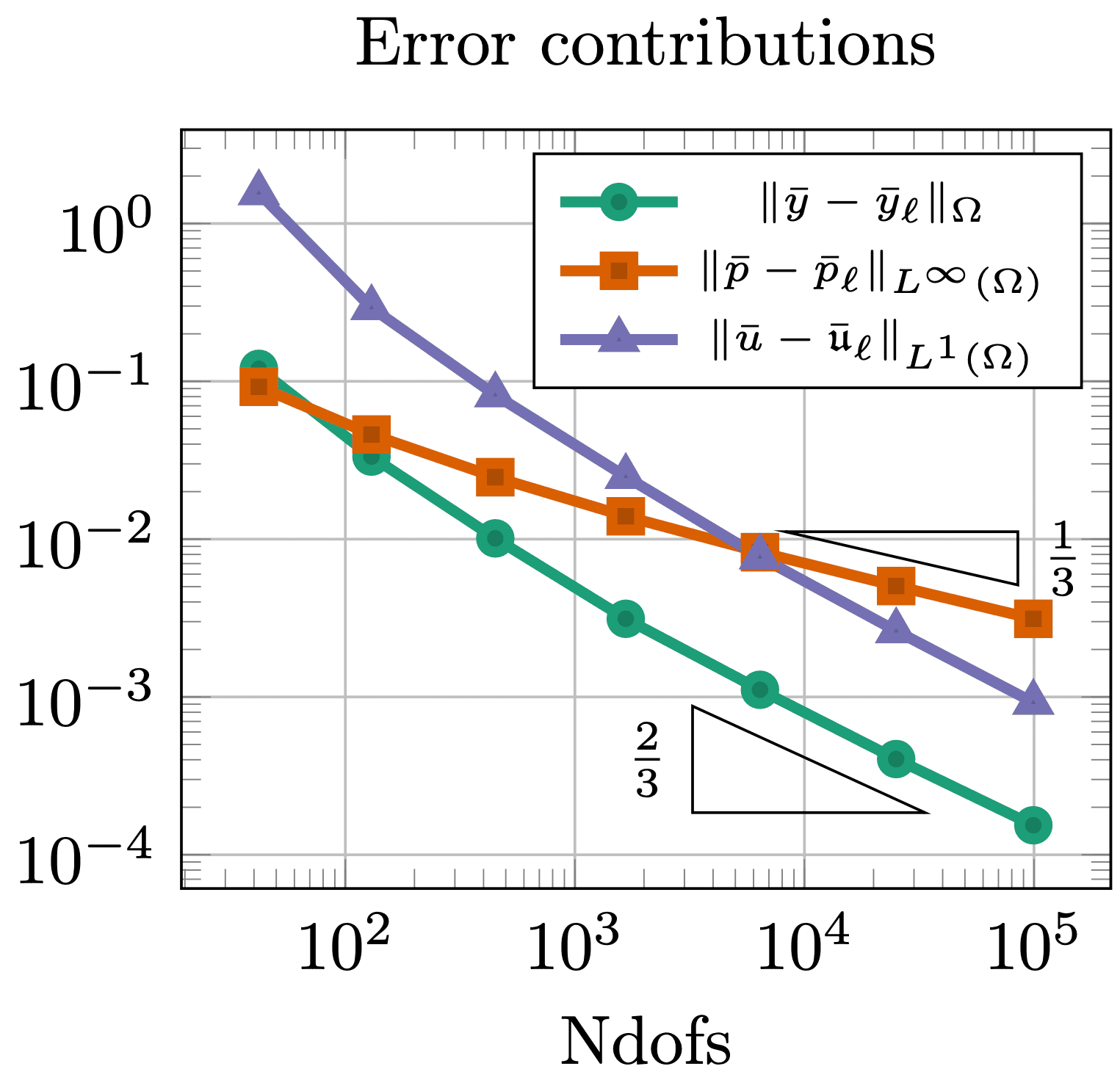}\\
\qquad
{\small{(4.A)}}
\end{minipage}
\begin{minipage}[c]{0.45\textwidth}\centering
\includegraphics[trim={0 0 0 0},clip,width=5.5cm,height=5.3cm,scale=0.30]{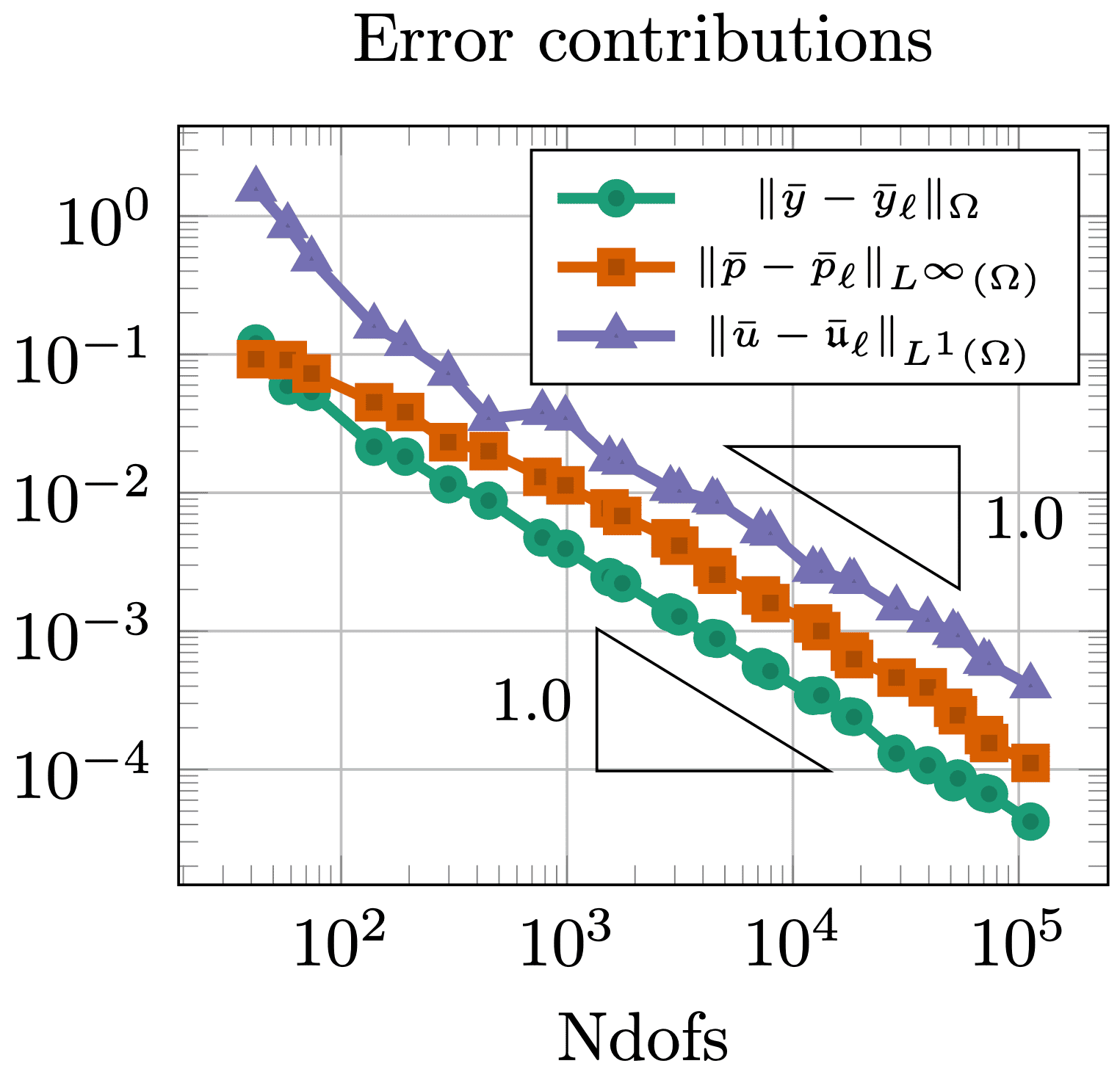}\\
\qquad
{\small{(4.B)}}
\end{minipage}
\\
\begin{minipage}[c]{0.45\textwidth}\centering
\includegraphics[trim={0 0 0 0},clip,width=5.5cm,height=5.3cm,scale=0.30]{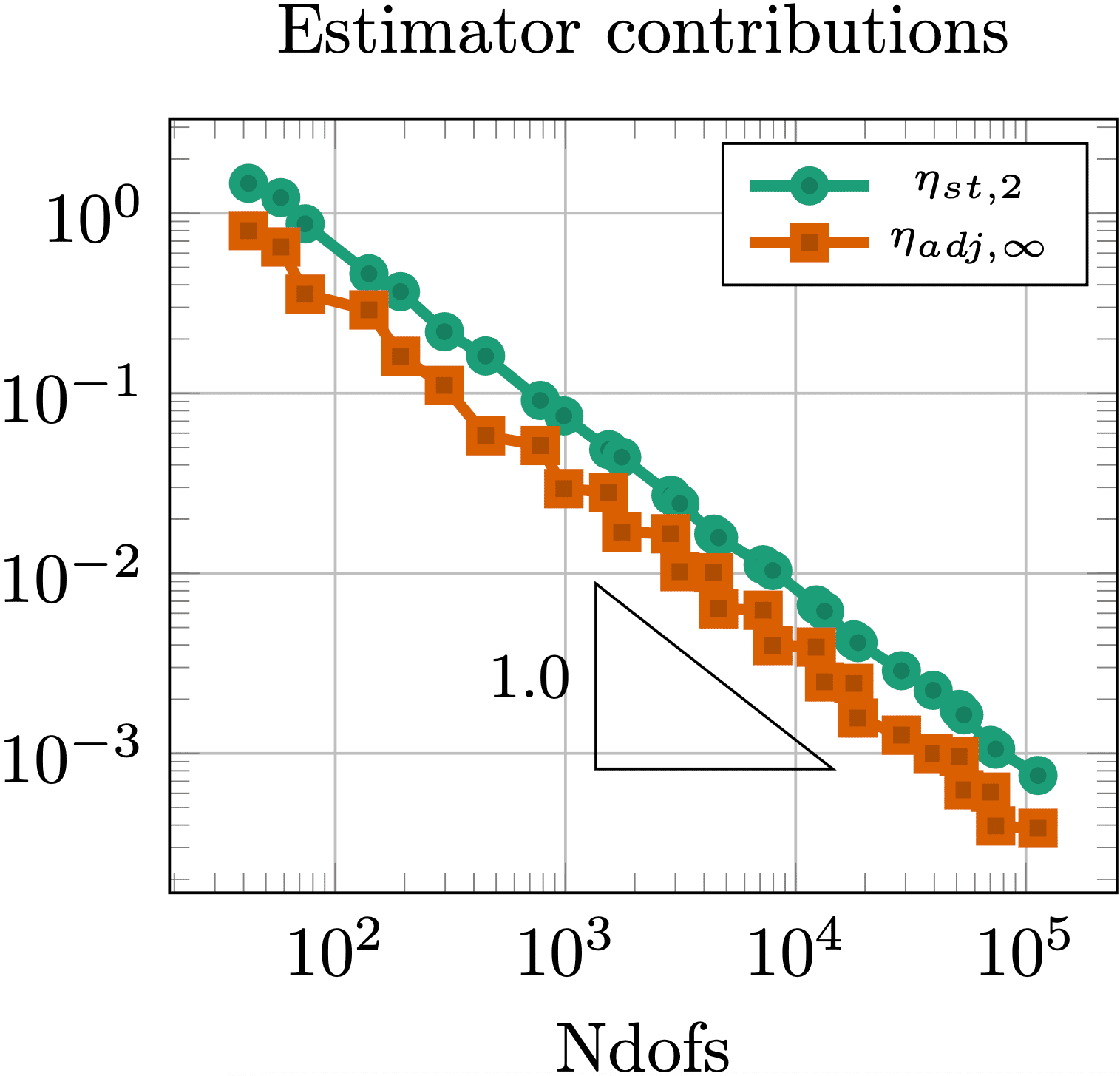}\\
\qquad
{\small{(4.C)}}
\end{minipage}
\begin{minipage}[c]{0.45\textwidth}\centering
\includegraphics[trim={0 0 0 0},clip,width=5.5cm,height=5.3cm,scale=0.30]{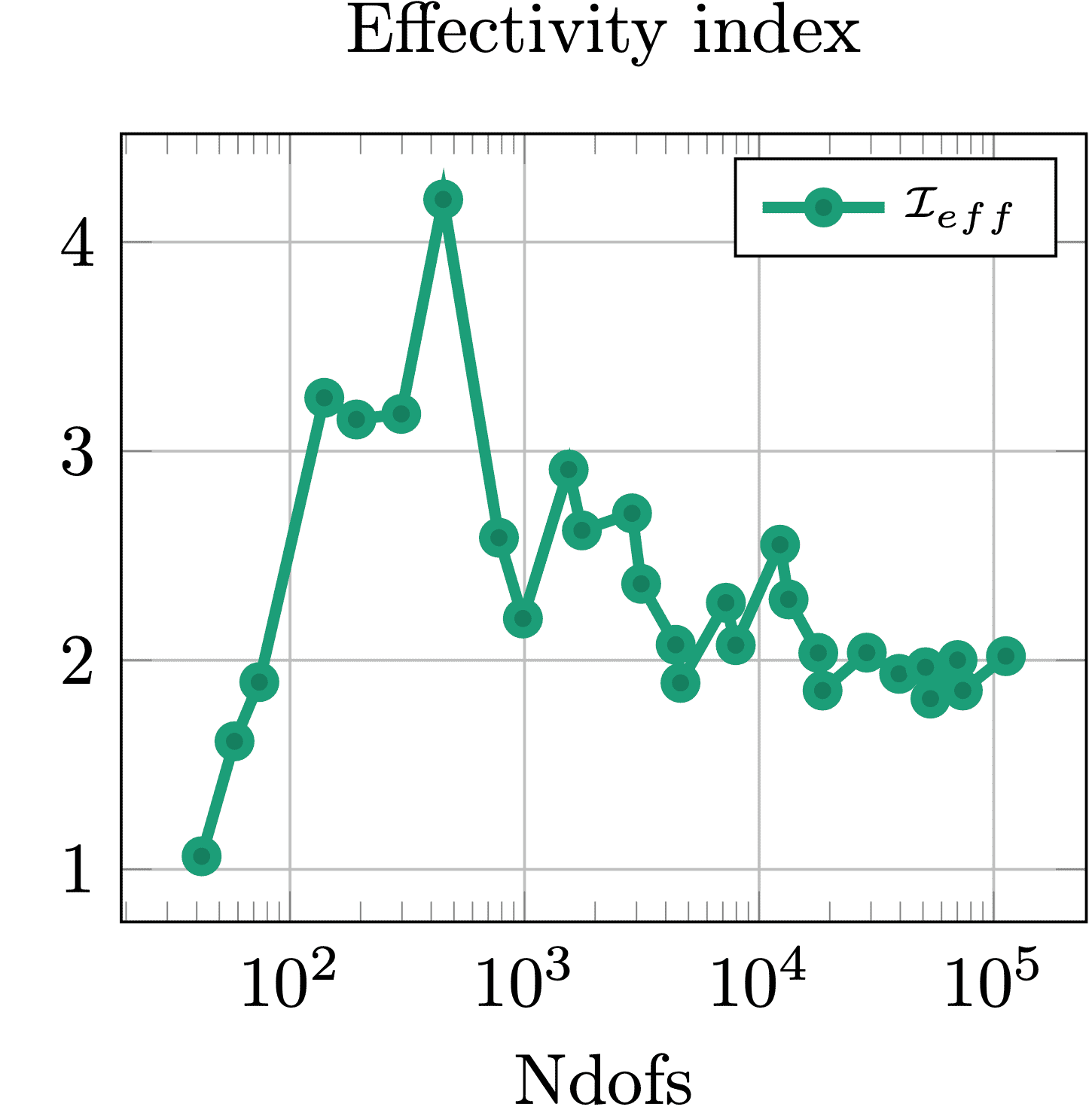}\\
\qquad
{\small{(4.D)}}
\end{minipage}
 \caption{Experimental rates of convergence for individual contributions of the total error with uniform (4.A) and adaptive (4.B) refinement, convergence rates for individual contributions of the estimator $E$ (4.C), and effectivity index (4.D) with adaptive refinement for the problem from section \ref{sec:ex_2}.}
\label{fig:ex_2}
\end{figure}


\begin{figure}[!ht]
\begin{minipage}[c]{0.32\textwidth}\centering
\includegraphics[trim={0 0 0 0},clip,width=4.0cm,height=4.0cm,scale=0.30]{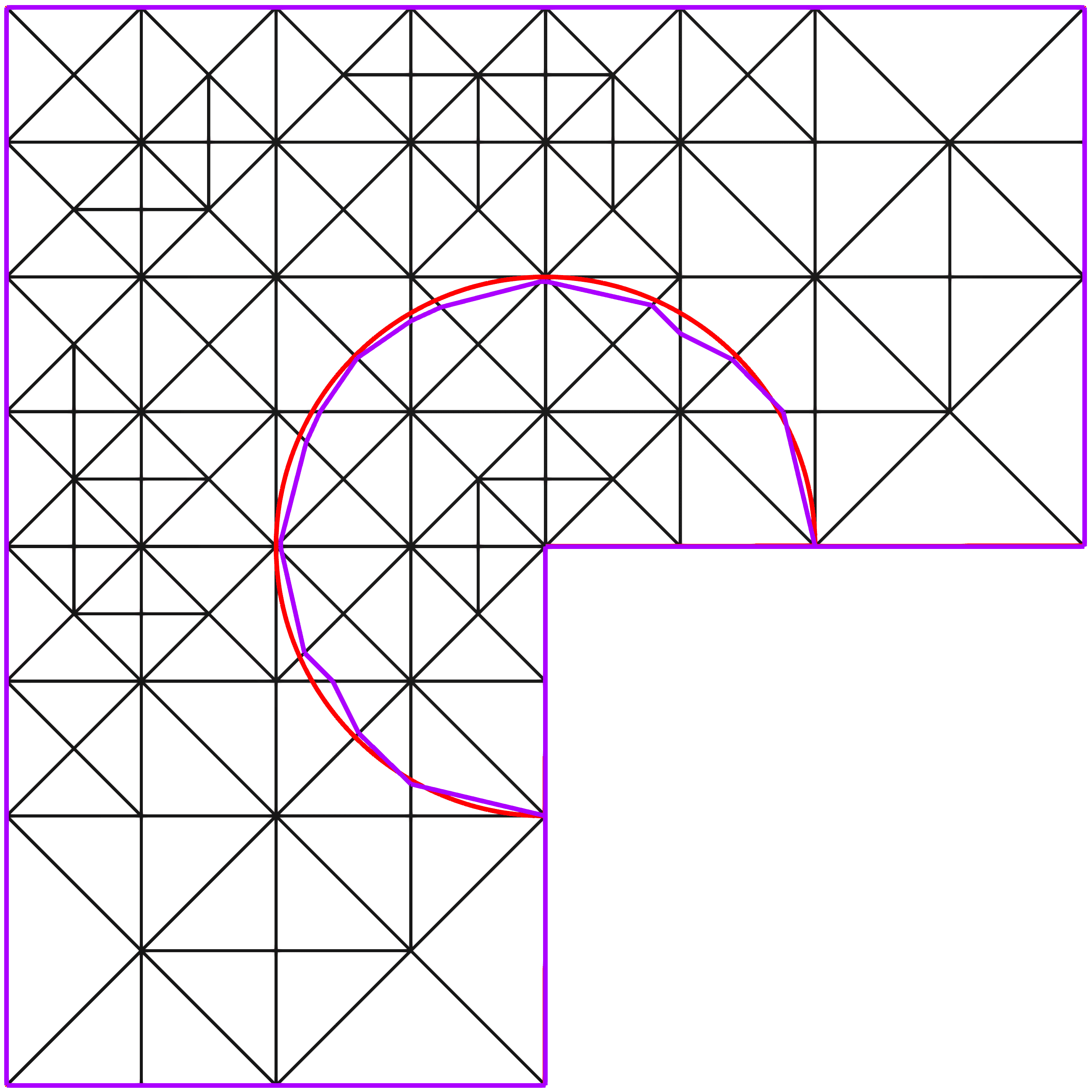}\\
{\small{(5.A)}}
\end{minipage}
\begin{minipage}[c]{0.32\textwidth}\centering
\includegraphics[trim={0 0 0 0},clip,width=4.0cm,height=4.0cm,scale=0.30]{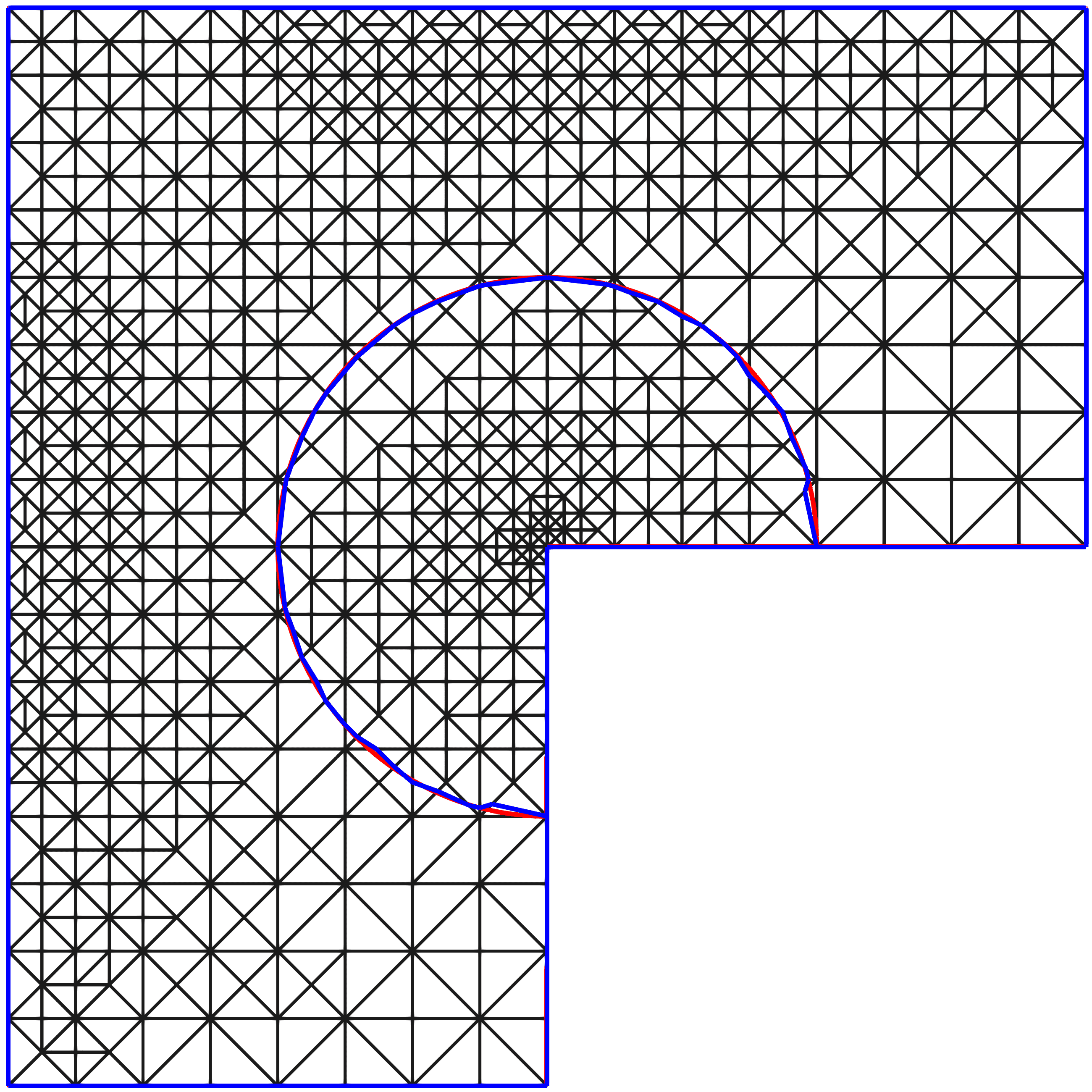}\\
{\small{(5.B)}}
\end{minipage}
\begin{minipage}[c]{0.32\textwidth}\centering
\includegraphics[trim={0 0 0 0},clip,width=4.0cm,height=4.0cm,scale=0.30]{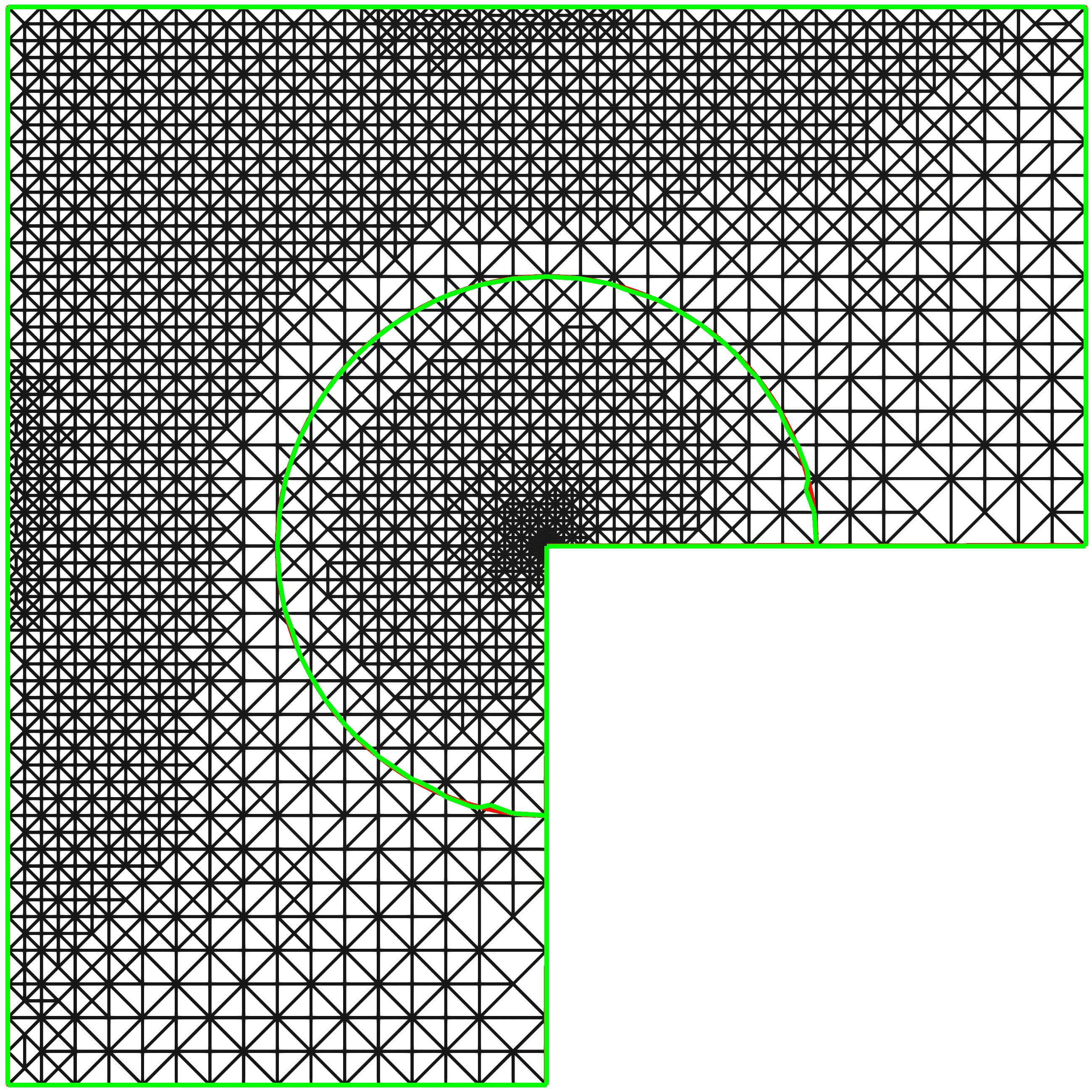}\\
{\small{(5.C)}}
\end{minipage}
\\
\begin{minipage}[c]{0.32\textwidth}\centering
\includegraphics[trim={0 0 0 0},clip,width=4.0cm,height=4.0cm,scale=0.30]{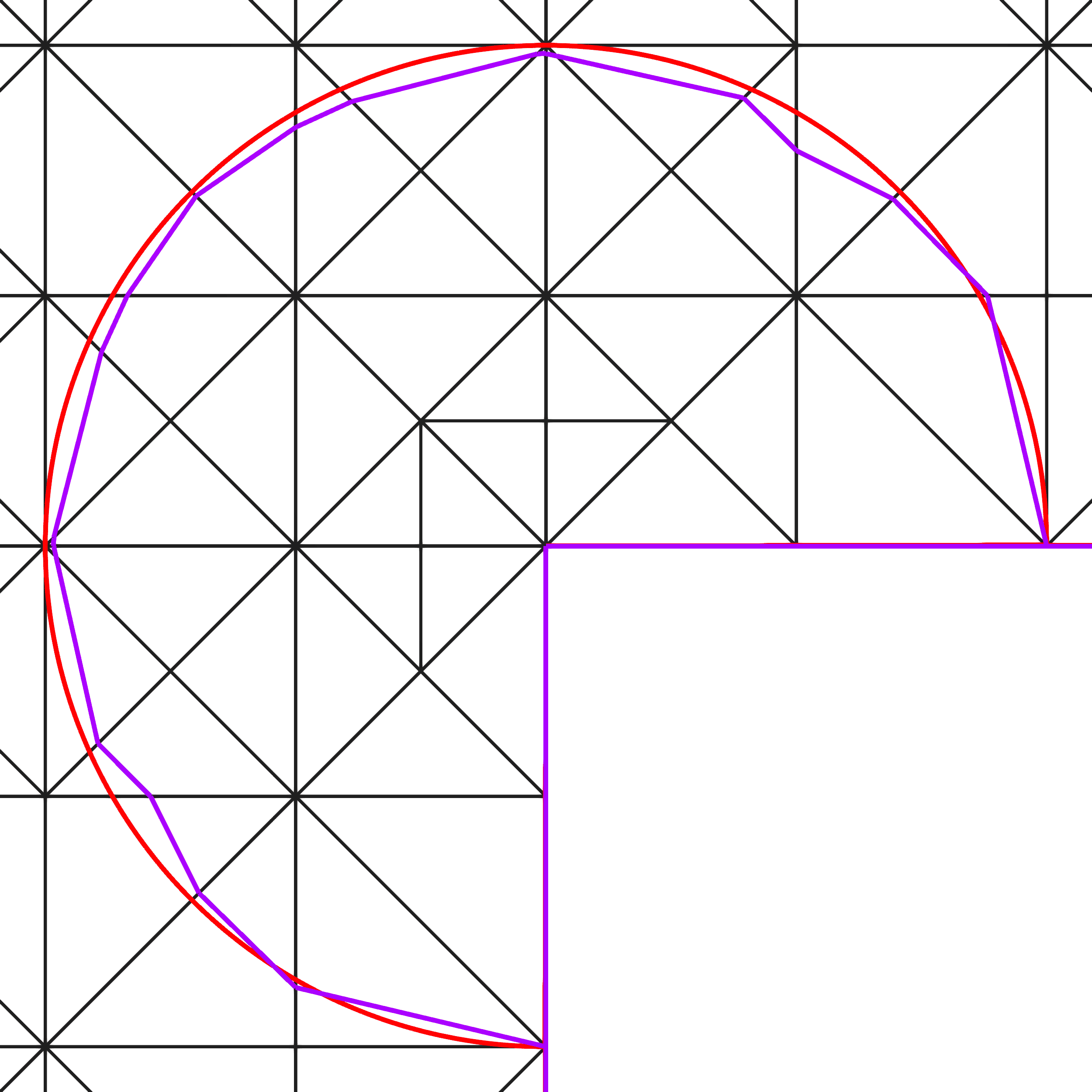}\\
{\small{(5.D)}}
\end{minipage}
\begin{minipage}[c]{0.32\textwidth}\centering
\includegraphics[trim={0 0 0 0},clip,width=4.0cm,height=4.0cm,scale=0.30]{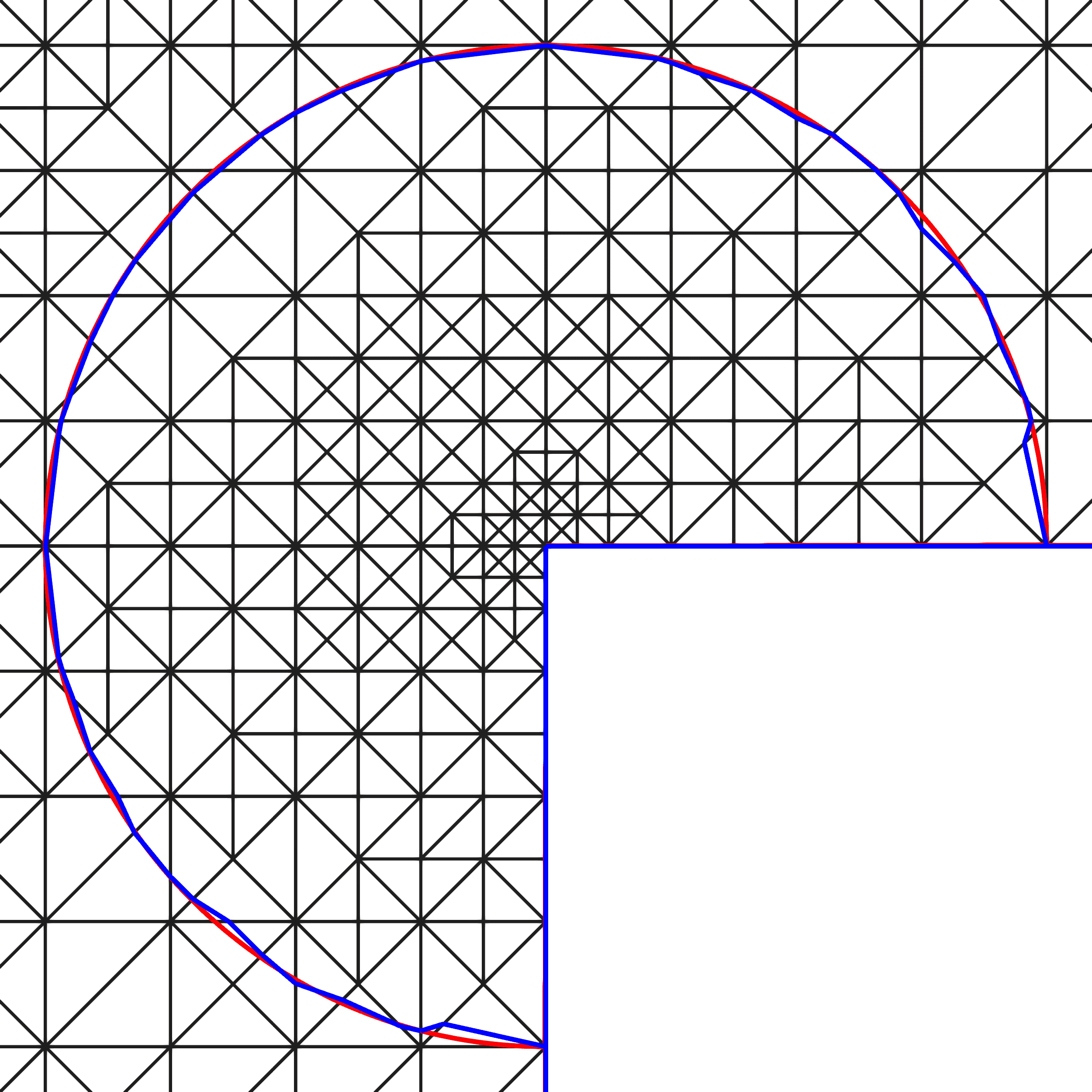}\\
{\small{(5.E)}}
\end{minipage}
\begin{minipage}[c]{0.32\textwidth}\centering
\includegraphics[trim={0 0 0 0},clip,width=4.0cm,height=4.0cm,scale=0.30]{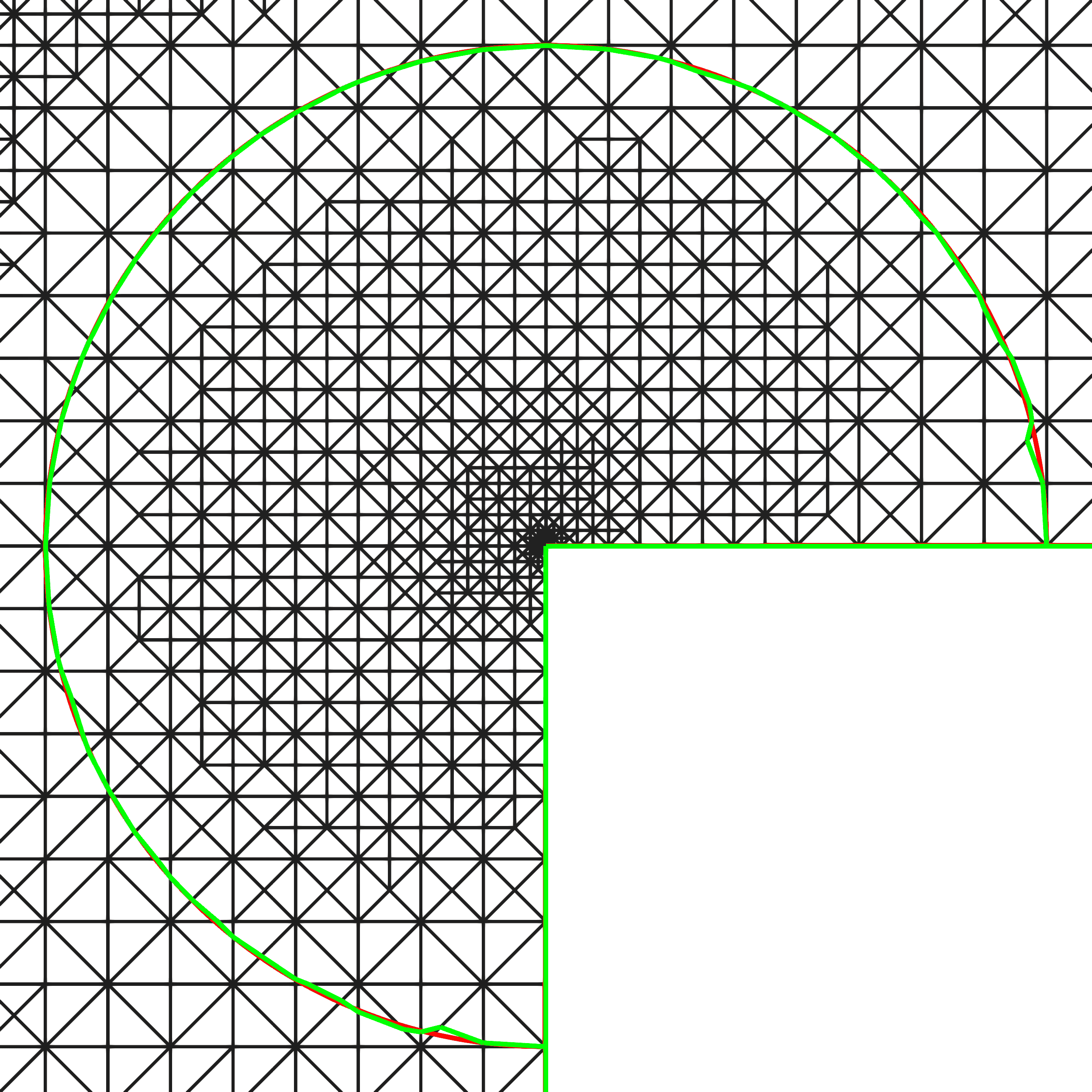}\\
{\small{(5.F)}}
\end{minipage}
\caption{Comparison of the continuous (red) and discrete switching sets on the adaptively refined meshes obtained after $5$ ((5.A) and (5.D)), $10$ ((5.B) and (5.E)), and $15$ ((5.C) and (5.F)) iterations for the problem from section \ref{sec:ex_2}.}
\label{fig:ex_2_1}
\end{figure}                


\begin{figure}[!ht]
\centering
\begin{minipage}[c]{0.42\textwidth}\centering
\includegraphics[trim={0 0 0 0},clip,width=5cm,height=2.5cm,scale=0.30]{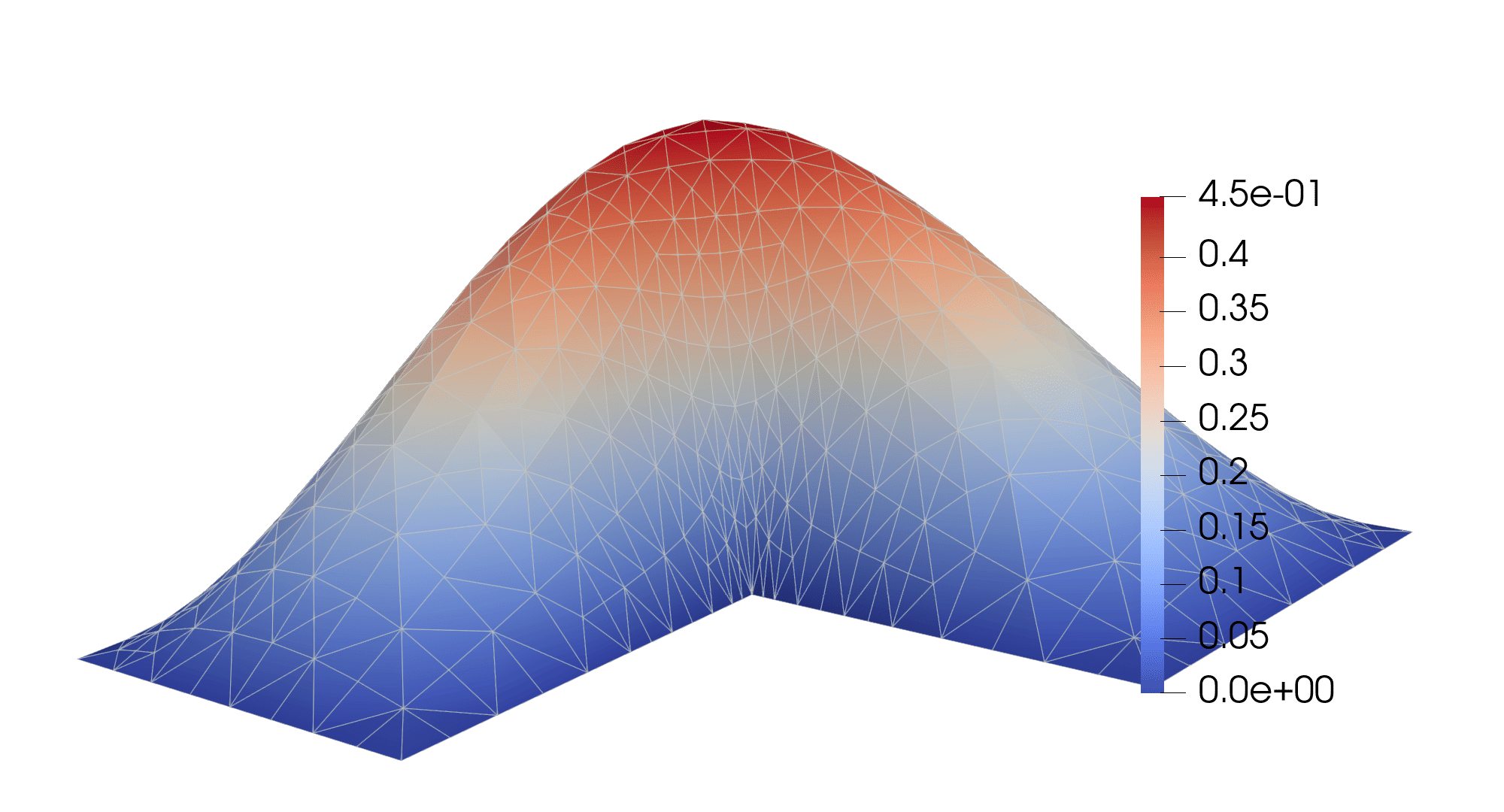}
{\small{(6.A)}}
\end{minipage}
\begin{minipage}[c]{0.42\textwidth}\centering
\includegraphics[trim={0 0 0 0},clip,width=5cm,height=2.3cm,scale=0.30]{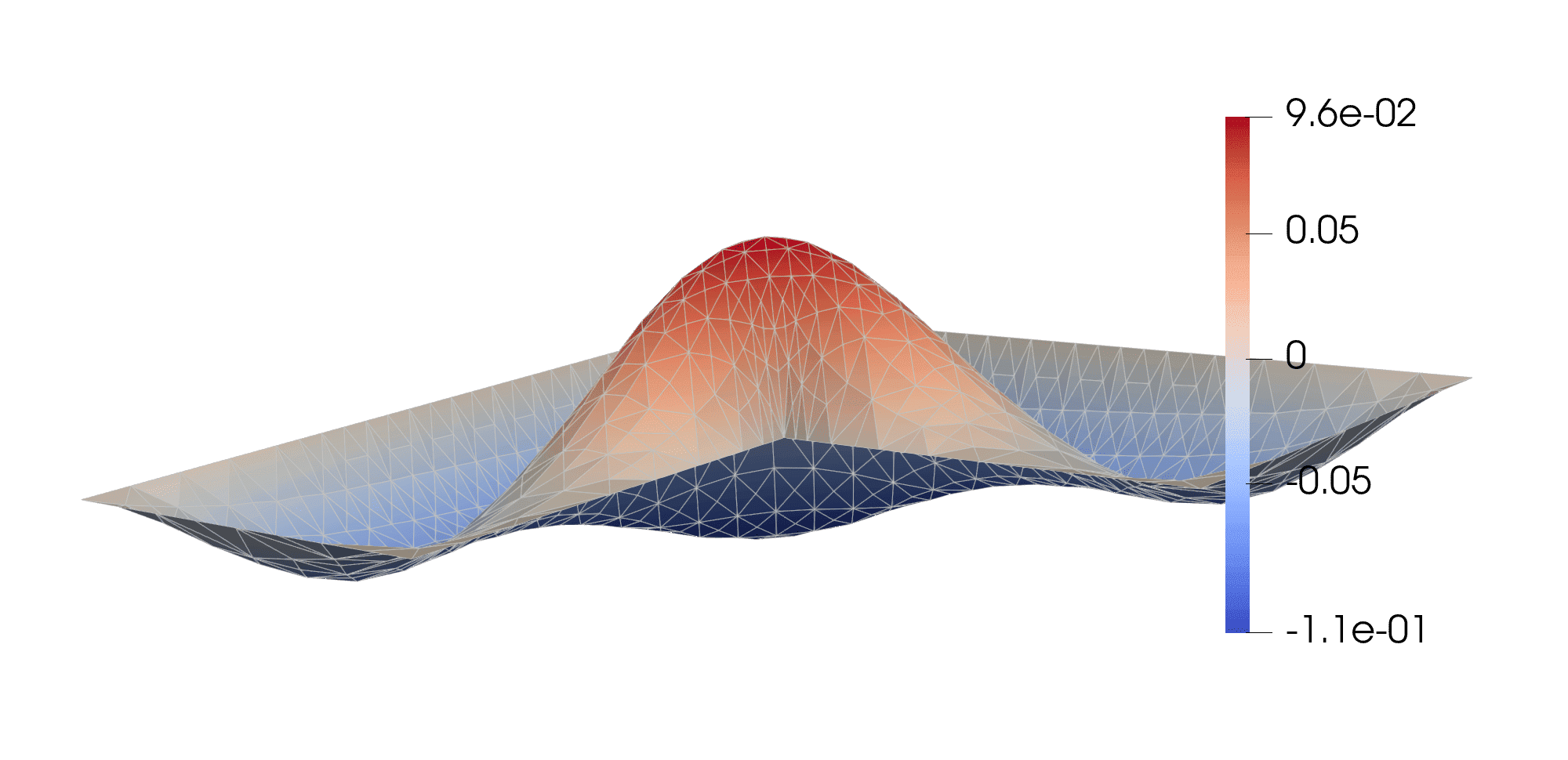}
{\small{(6.B)}}
\end{minipage}
\\
\begin{minipage}[c]{0.32\textwidth}\centering
\includegraphics[trim={0 0 0 0},clip,width=4.0cm,height=4.0cm,scale=0.30]{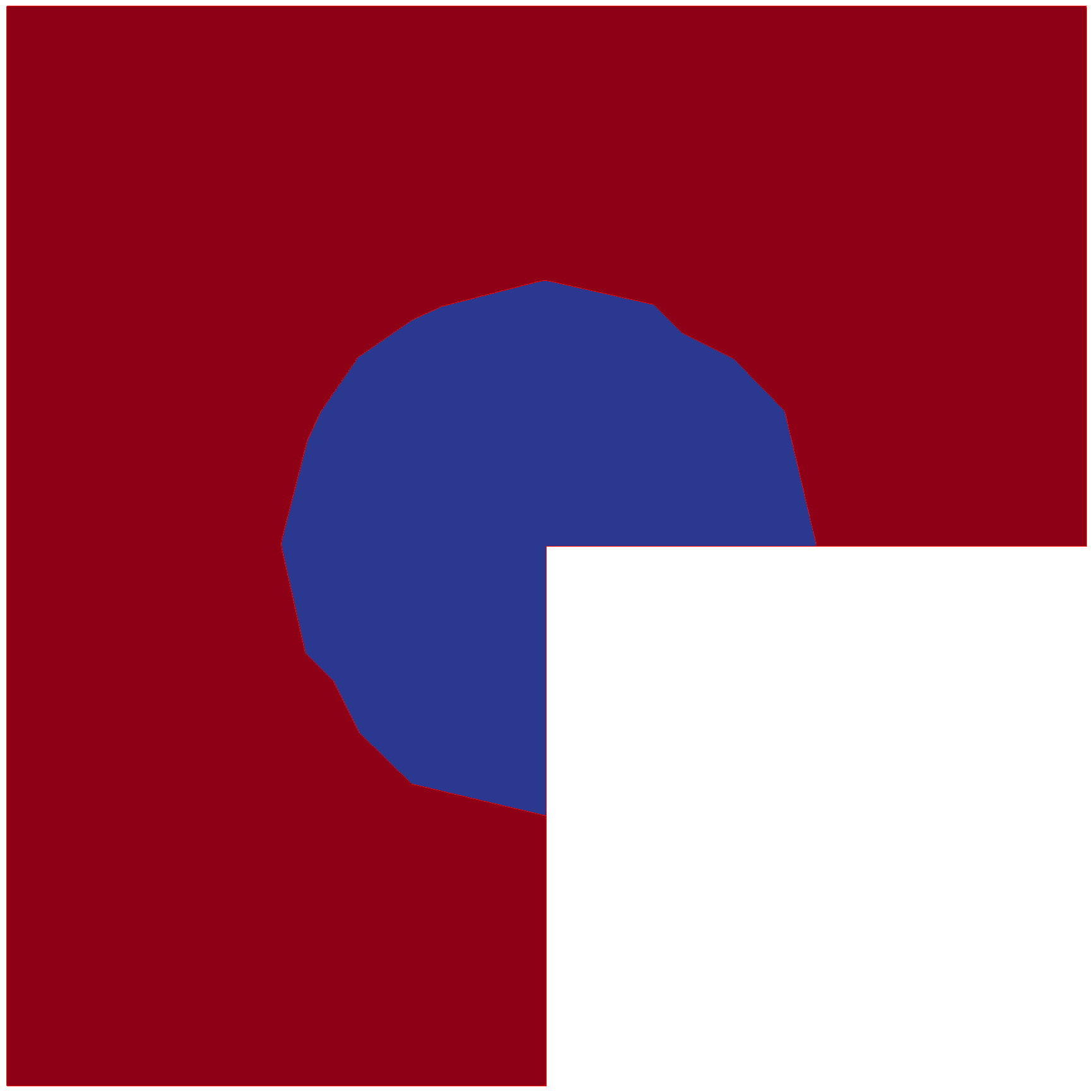}
{\small{(6.C)}}
\end{minipage}
\begin{minipage}[c]{0.32\textwidth}\centering
\includegraphics[trim={0 0 0 0},clip,width=4.0cm,height=4.0cm,scale=0.30]{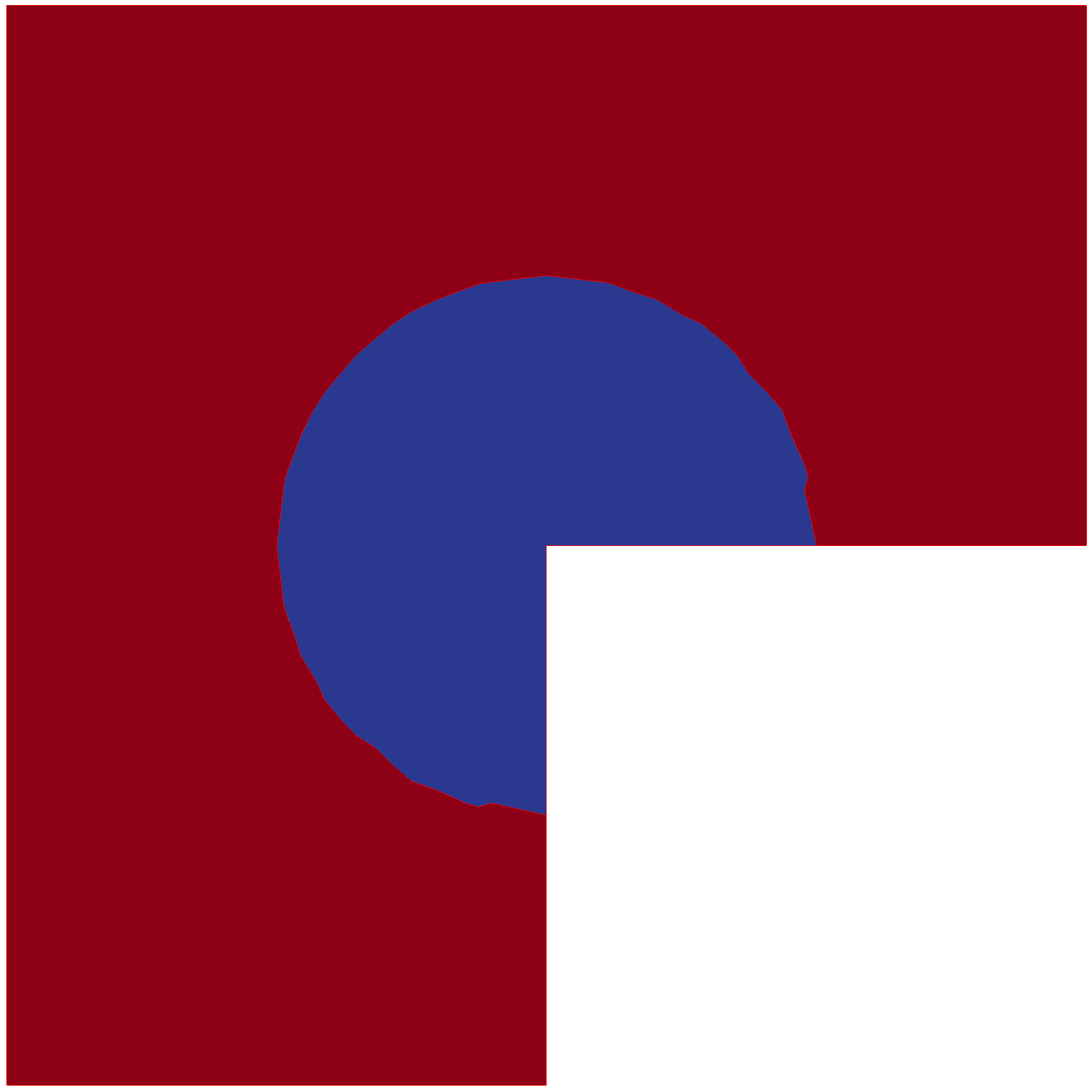}
{\small{(6.D)}}
\end{minipage}
\begin{minipage}[c]{0.32\textwidth}\centering
\includegraphics[trim={0 0 0 0},clip,width=4.0cm,height=4.0cm,scale=0.30]{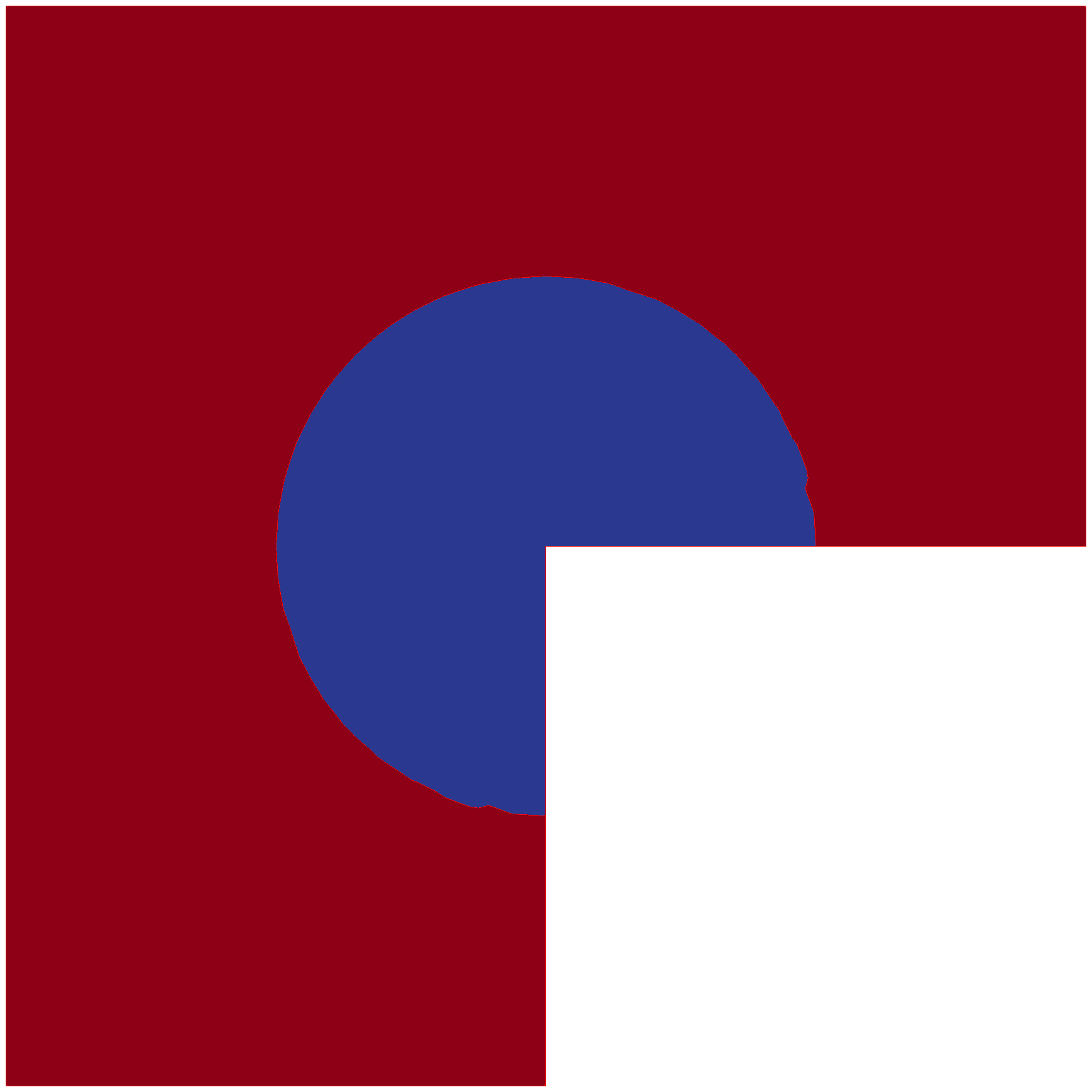}
{\small{(6.E)}}
\end{minipage}
\caption{Approximate solutions $\bar{y}_{\ell}$ (6.A) and $\bar{p}_{\ell}$ (6.B) obtained after $10$ iterations, and approximate control $\bar{\mathfrak{u}}_{\ell}$ obtained after $5$ (6.C), $10$ (6.D), and $15$ (6.E) iterations for the problem from section \ref{sec:ex_1};
in the red region the value is $1$ whereas in the blue region is $-1$.}
\label{fig:ex_2_2}
\end{figure}

We present the results obtained for this example in Figures \ref{fig:ex_2}, \ref{fig:ex_2_1}, and \ref{fig:ex_2_2}.
In Fig. \ref{fig:ex_2}, we display experimental rates of convergence for each contribution of the total error when uniform and adaptive refinement are considered, experimental rates of convergence for all the individual contributions of the error estimator $E$ (see \eqref{def:total_estimator}), and the effectivity index when adaptive refinement is considered.
We observe that the designed adaptive procedure outperforms uniform refinement.
In particular, it exhibits optimal experimental rates of convergence for each contribution of the total error and the error estimator (Figs. (4.B) and (4.C)).
We observe that the effectivity seems to stabilize around the values $2$ when the total number of degrees of freedom increases  (Fig. (4.D)).
In Fig. \ref{fig:ex_2_2} we present adaptively refined meshes obtained after 5, 10, and 15 iterations.
It can be observed that the refinement is being concentrated at the re-entrant corner $(0, 0)$, as well as in the upper and left regions of the domain;
an explicit connection between the switching set and the adaptively refined meshes is not observed.
The refinement produced in the upper and left regions of the domain may stem from the structure of the optimal state and adjoint state near these regions; see Fig. \ref{fig:ex_2_2}.
We also observe that the discrete switching set seems to converge to the switching set of the continuous solution when the total number of degrees of freedom increases.
Approximate optimal solutions are displayed in Fig. \ref{fig:ex_2_2}.
The approximate optimal control $\bar{\mathfrak{u}}_{\ell}$ exhibits the bang-bang structure in the three approximations (Figs. (6.C), (6.D), and (6.E)).


\subsection{Unknown solution on non-convex domain}\label{sec:ex_3}

We set $\Omega=(-1,1)^2\setminus[0,1)\times(-1,0]$, $a=-1$, $b=1$, and data
\begin{align*}
y_{\Omega}(x_{1},x_{2}) = \frac{1}{\sqrt[4]{x_{1}^2 + x_{2}^2}} - 10\sin(x_{1}x_{2}), \qquad (x_{1},x_{2})\in \Omega.
\end{align*}
We note that $y_{\Omega}\in L^{2}(\Omega) \setminus L^{\infty}(\Omega)$. 
The purpose of this example is to investigate the influence of $\beta$ (see \eqref{eq:assumption_S}) in the error indicator \eqref{def:total_indicator} within the adaptive procedure.
We consider $\beta\in\{0.2,0.4,0.6,0.8,1.0\}$.

In Figs. \ref{fig:ex_3} and \ref{fig:ex_3_1} we show the results obtained for this example. 
In Fig. \ref{fig:ex_3} we consider the case when $\beta=1.0$.
Similar conclusions to the ones presented for the example from section \ref{sec:ex_2} can be derived from Fig. \ref{fig:ex_3}.
Particularly, we observe optimal experimental rates of convergence for all the individual contributions of the error estimator $E$ within the adaptive loop.
In Fig.  \ref{fig:ex_3_1} we show experimental rates of convergence for all the individual contributions of the error estimator $E$ with different values of $\beta$.
We observe that, when the value of $\beta$ decreases more degrees of freedom are needed to attain the optimal rate of convergence.


\begin{figure}[!ht]
\centering
\begin{minipage}[c]{0.42\textwidth}\centering
\includegraphics[trim={0 0 0 0},clip,width=5cm,height=2.4cm,scale=0.30]{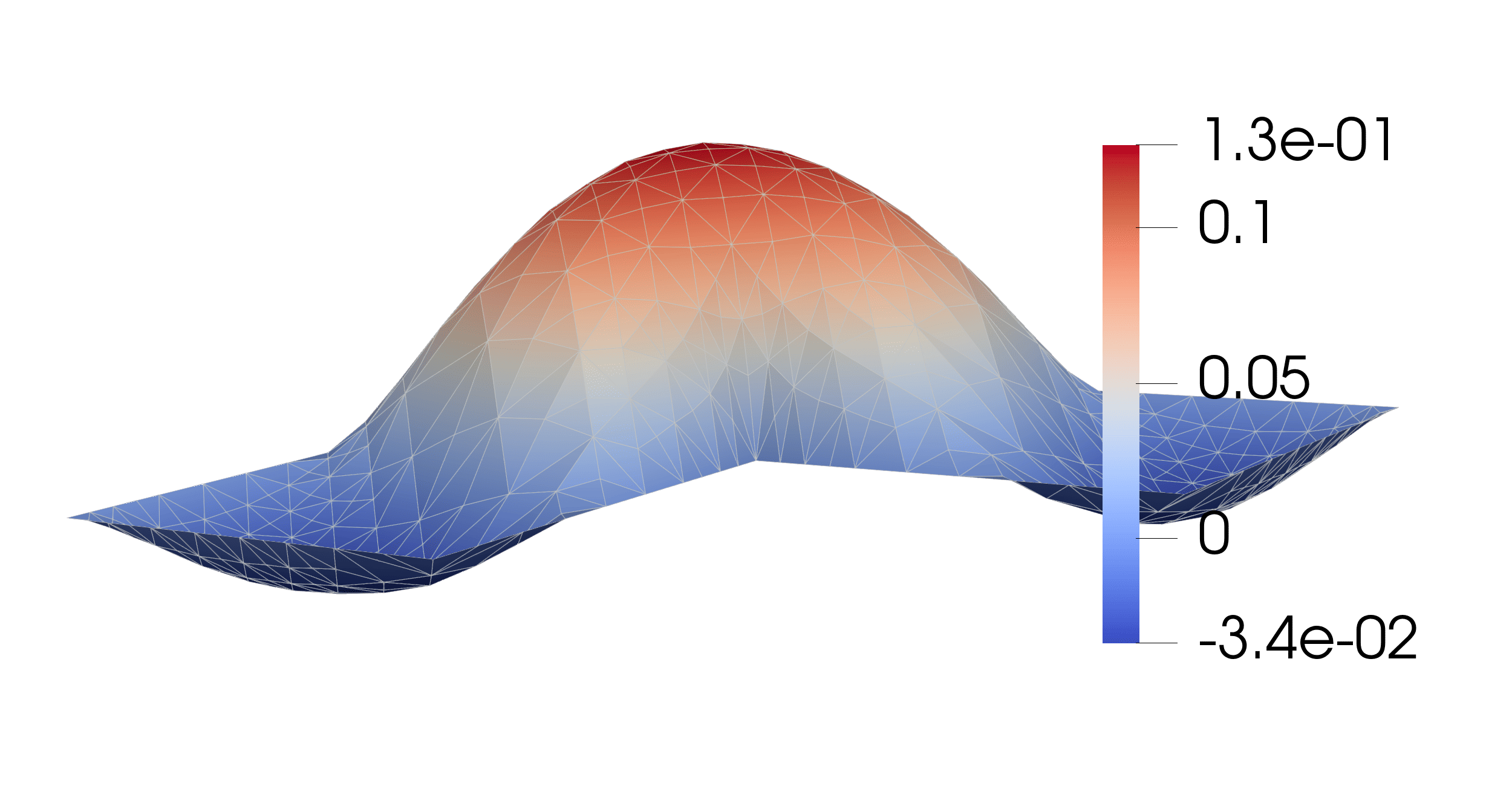}
{\small{(7.A)}}
\end{minipage}
\begin{minipage}[c]{0.42\textwidth}\centering
\includegraphics[trim={0 0 0 0},clip,width=5cm,height=2.6cm,scale=0.30]{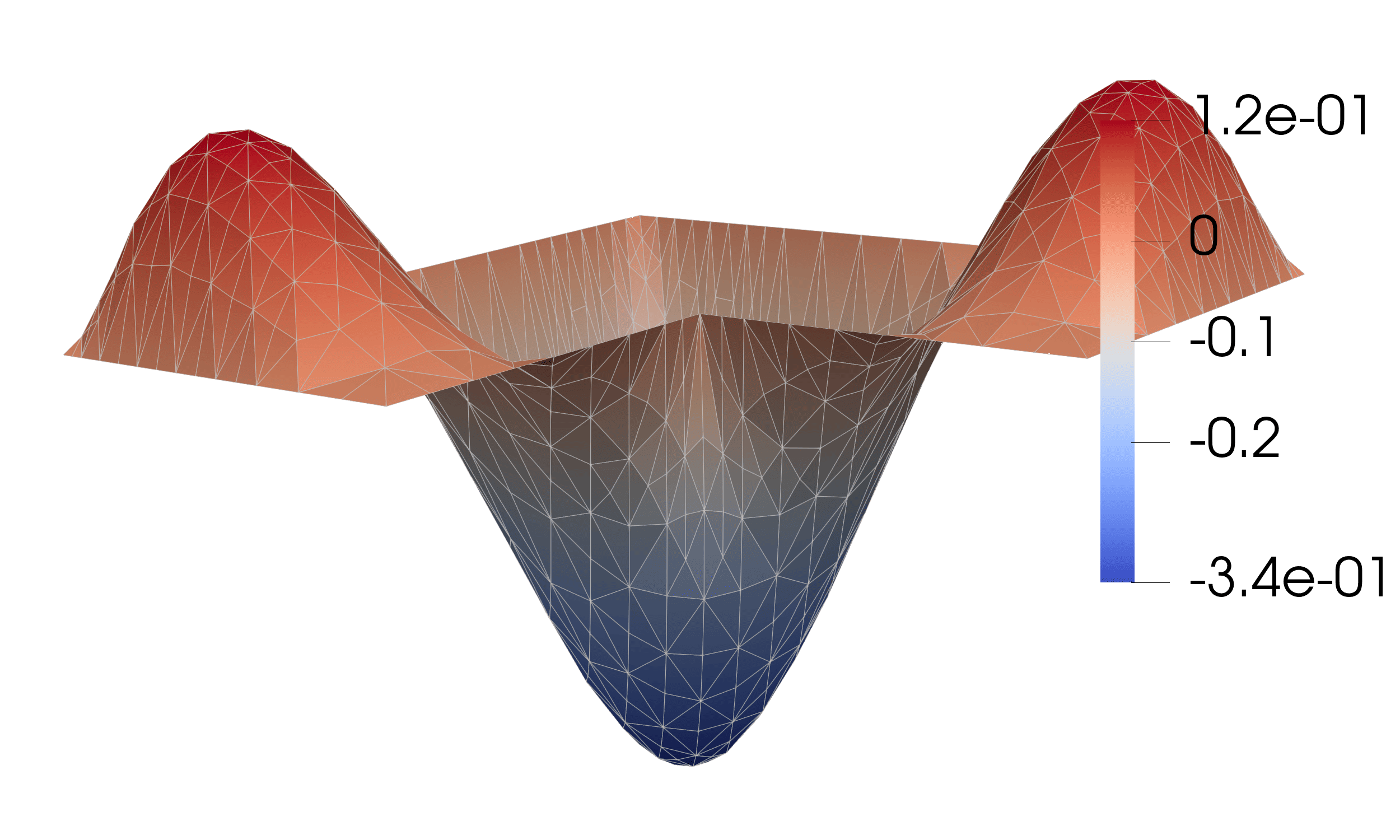}
{\small{(7.B)}}
\end{minipage}
\\
\begin{minipage}[c]{0.32\textwidth}\centering
\includegraphics[trim={0 0 0 0},clip,width=4.0cm,height=4.0cm,scale=0.30]{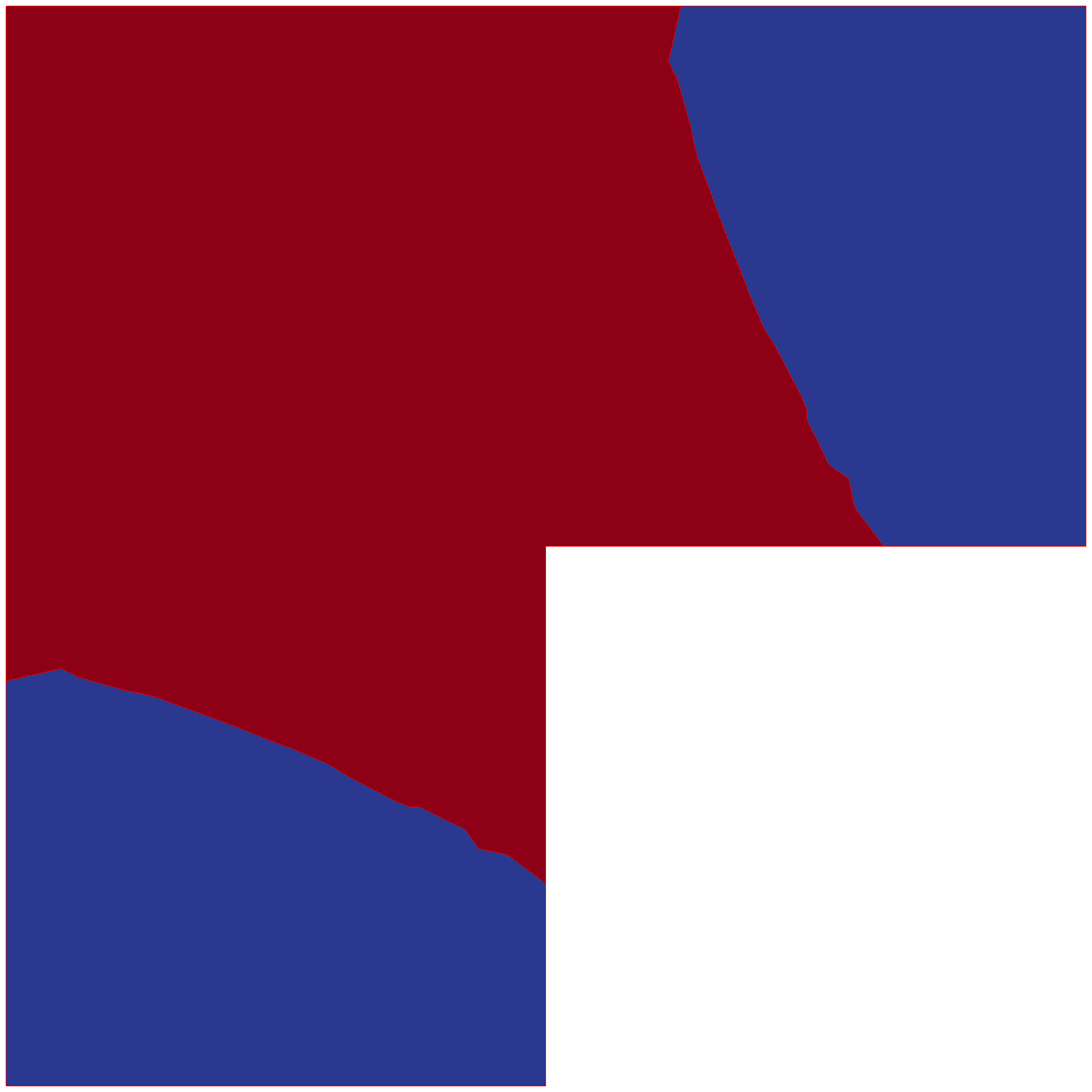}
{\small{(7.C)}}
\end{minipage}
\begin{minipage}[c]{0.32\textwidth}\centering
\includegraphics[trim={0 0 0 0},clip,width=4.0cm,height=4.0cm,scale=0.30]{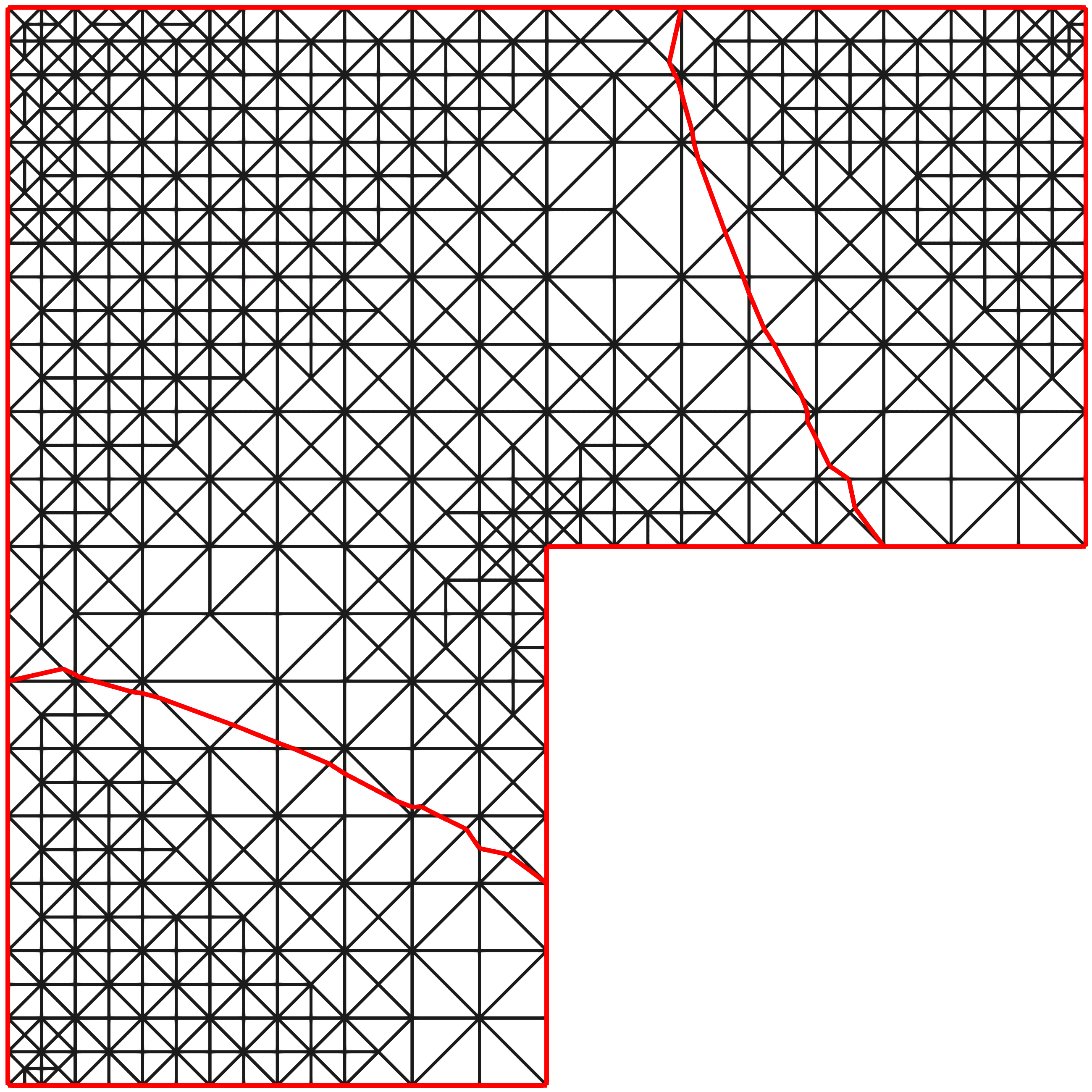}
{\small{(7.D)}}
\end{minipage}
\begin{minipage}[c]{0.32\textwidth}\centering
\includegraphics[trim={0 0 0 0},clip,width=4.3cm,height=4.2cm,scale=0.30]{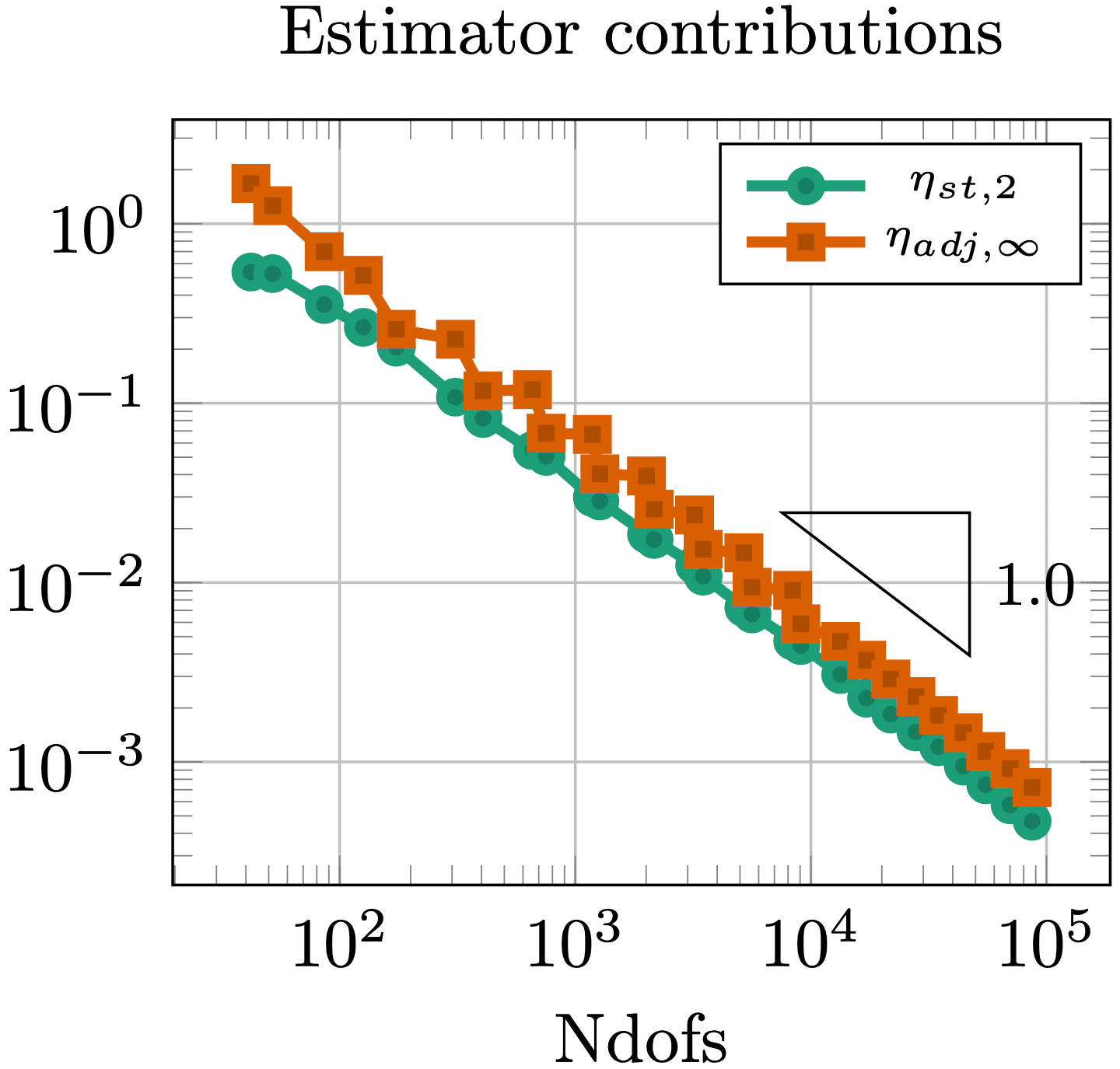}
{\small{(7.E)}}
\end{minipage}
\caption{Approximate solutions $\bar{y}_{\ell}$ (7.A), $\bar{p}_{\ell}$ (7.B), and $\bar{\mathfrak{u}}_{\ell}$ (7.C), and adaptively refined mesh (7.D) obtained after $10$ iterations.
Experimental rates of convergence for individual contributions of the estimator $E$ with $\beta=1.0$ (7.E) for the problem from section \ref{sec:ex_3}.}
\label{fig:ex_3}
\end{figure}


\begin{figure}[!ht]
\centering
\begin{minipage}[c]{0.45\textwidth}\centering
\includegraphics[trim={0 0 0 0},clip,width=5.5cm,height=5.3cm,scale=0.30]{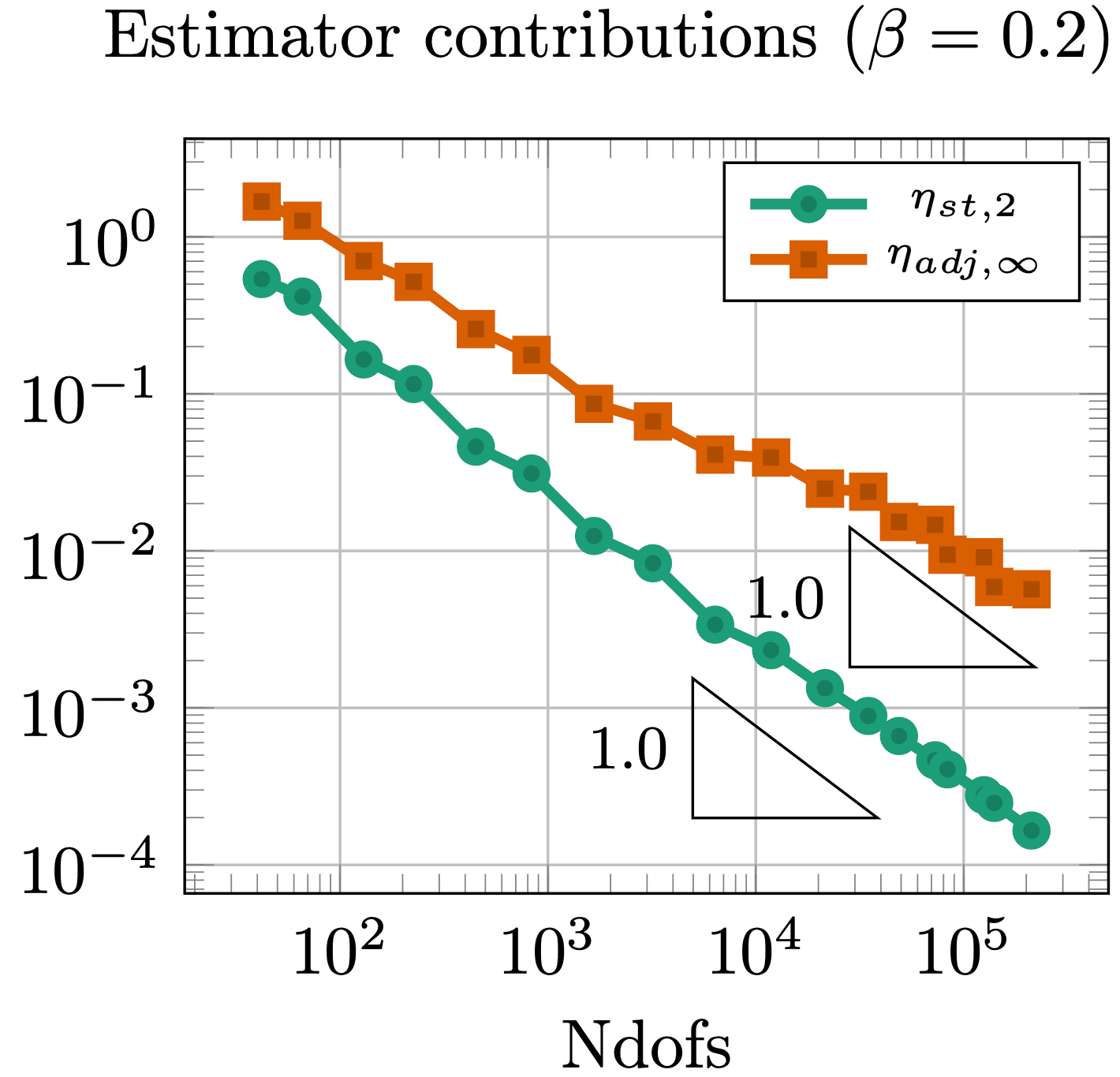}\\
\qquad
{\small{(8.A)}}
\end{minipage}
\begin{minipage}[c]{0.45\textwidth}\centering
\includegraphics[trim={0 0 0 0},clip,width=5.5cm,height=5.3cm,scale=0.30]{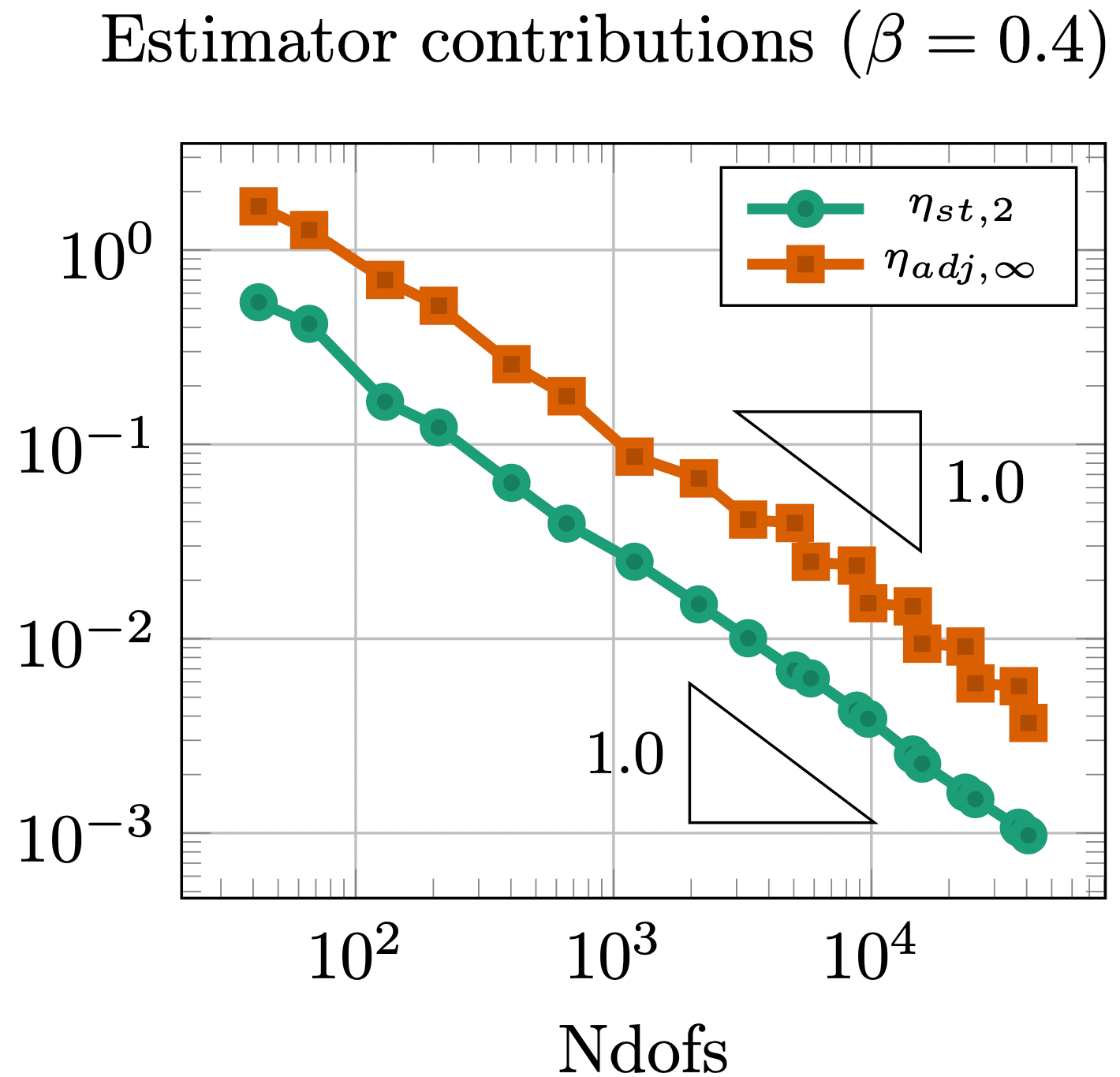}\\
\qquad
{\small{(8.B)}}
\end{minipage}
\\
\begin{minipage}[c]{0.45\textwidth}\centering
\includegraphics[trim={0 0 0 0},clip,width=5.5cm,height=5.3cm,scale=0.30]{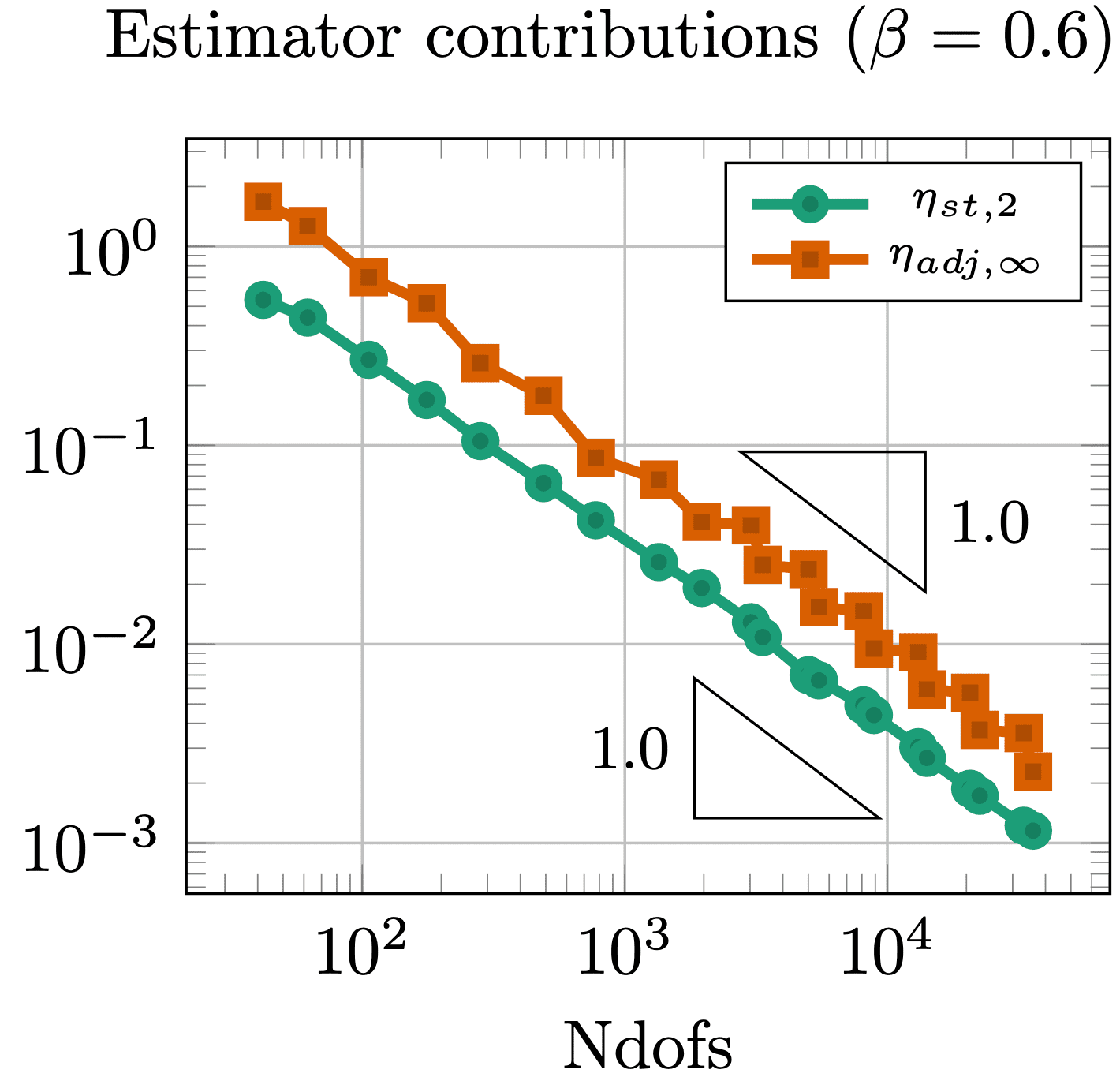}\\
\qquad
{\small{(8.C)}}
\end{minipage}
\begin{minipage}[c]{0.45\textwidth}\centering
\includegraphics[trim={0 0 0 0},clip,width=5.5cm,height=5.3cm,scale=0.30]{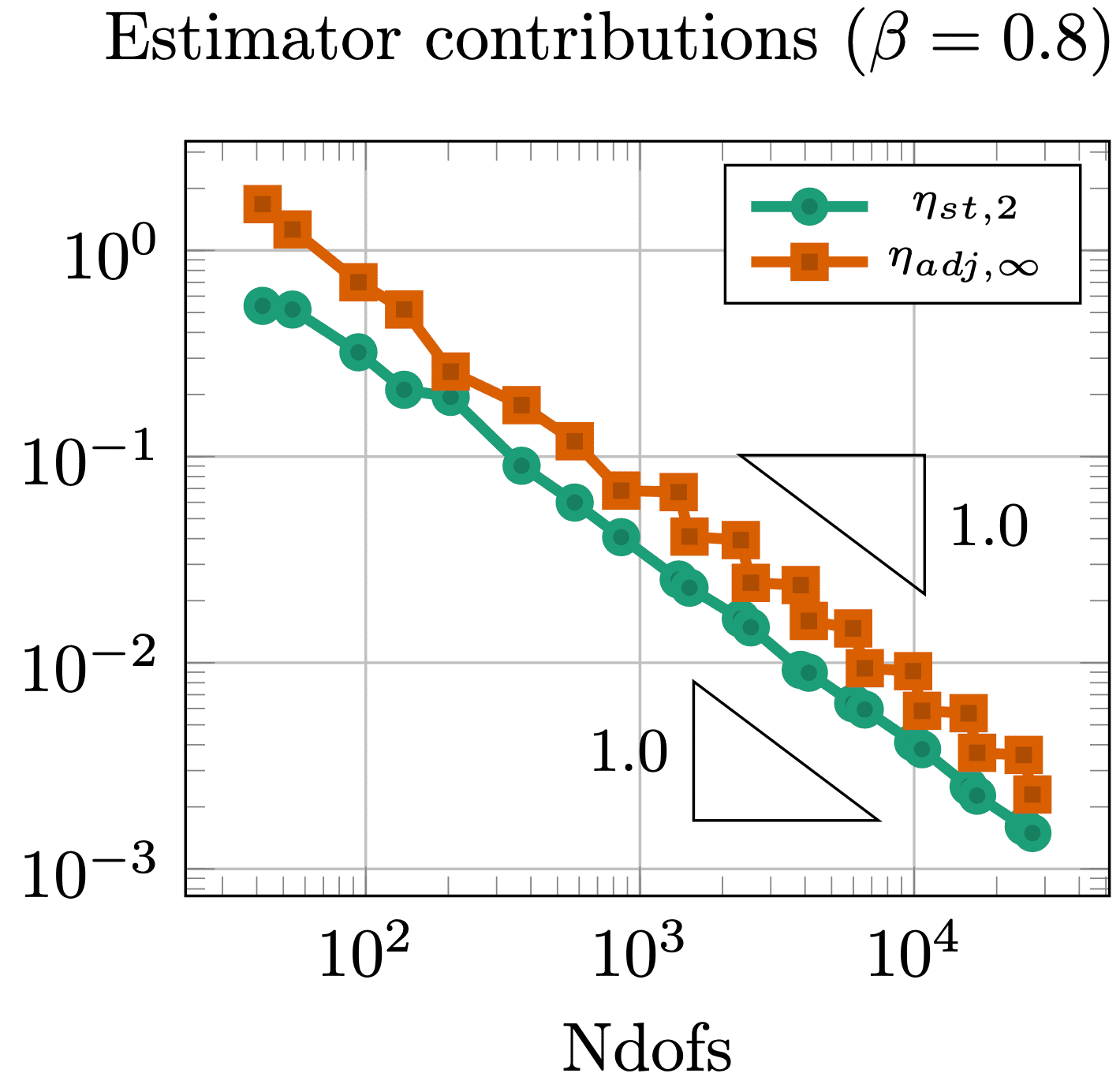}\\
\qquad
{\small{(8.D)}}
\end{minipage}
 \caption{Experimental rates of convergence for individual contributions of the estimator $E$ when $\beta=0.2$ (8.A), $\beta=0.4$ (8.B), $\beta = 0.6$ (8.C), and $\beta = 0.8$ (8.D) for the problem from section \ref{sec:ex_3}.}
\label{fig:ex_3_1}
\end{figure}


\bibliographystyle{siam}
\bibliography{biblio}

\end{document}